\theoremstyle{plain}
\newtheorem{theorem}{Theorem}[section]
\newtheorem{lemma}{Lemma}[section]
\newtheorem{prop}{Proposition}[section]
\newtheorem{definition}{Definition}[section]
\theoremstyle{remark}
\newtheorem{remark}[theorem]{Remark}
\def\A{\operatorname{A}}
\def\B{\operatorname{B}}
\def\C{\operatorname{C}}
\def\D{\operatorname{D}}
\def\I{\operatorname{I}}
\def\J{\operatorname{J}}
\def\O{\operatorname{O}}
\def\U{\operatorname{U}}
\def\M{\operatorname{M}}
\def\Z{\operatorname{Z}}
\def\PU{\operatorname{PU}}
\def\SO{\operatorname{SO}}
\def\Sp{\operatorname{Sp}}
\def\SU{\operatorname{SU}}
\def\PO{\operatorname{PO}}
\def\PSO{\operatorname{PSO}}
\def\PSp{\operatorname{PSp}}
\def\PSU{\operatorname{PSU}}
\def\Ad{\operatorname{Ad}}
\def\Aut{\operatorname{Aut}}
\def\diag{\operatorname{diag}}
\def\defe{\operatorname{defe}}
\def\det{\operatorname{det}}
\def\dim{\operatorname{dim}}
\def\Hom{\operatorname{Hom}}
\def\Im{\operatorname{Im}}
\def\Int{\operatorname{Int}}
\def\rank{\operatorname{rank}}
\def\sign{\operatorname{sign}}
\def\tr{\operatorname{tr}}
\newcommand{\frg}{\mathfrak{g}}
\newcommand{\fru}{\mathfrak{u}}
\newcommand{\bbC}{\mathbb{C}}
\newcommand{\bbR}{\mathbb{R}}
\newcommand{\bbZ}{\mathbb{Z}}
\newcommand{\bbH}{\mathbb{H}}
\begin{document}

\title{Maximal abelian subgroups of compact matrix groups}
\author{Jun Yu}
\date{}

\abstract{We classify closed abelian subgroups of the automorphism group of any compact classical simple
Lie algebra whose centralizer has the same dimension as the dimension of the subgroup, and describe Weyl
groups of maximal abelian subgroups.}
\endabstract

\maketitle

%\date{August 2011}
\noindent {\bf Mathematics Subject Classification (2010).} 20E07, 20E45, 20K27.

\noindent {\bf Keywords.} Abelian subgroup, bimultiplicative function, Weyl group.

\tableofcontents

\section{Introduction}

%Compact classical simple Lie algebras are divided into the lists $\mathfrak{su}(n)$, $\mathfrak{so}(n)$, $\mathfrak{sp}(n)$.
In this paper, we study closed abelian subgroups $F$ of a compact simple Lie group $G$ satisfying the condition
of \[\dim\frg_0^{F}=\dim F,\tag{$\ast$}\] where $\frg_0$ is the Lie algebra of $G$. It is clear that any maximal
abelian subgroup of $G$ satisfies the condition $(\ast)$. A good property of this class of abelian subgroups is
the following: given a surjective homomorphism $p: G_1\longrightarrow G_2$ and a closed abelian subgroup $F$ of
$G_1$, $F$ satisfies the condition $(\ast)$ if and only if $p(F)$ satisfies the condition $(\ast)$. Precisely, we
classify closed abelian subgroups of the automorphism group $G=\Aut(\fru_0)$ satisfying the condition $(\ast)$ for
$\fru_0$ a compact classical simple Lie algebra (except $\mathfrak{so}(8)$). In other publications, we classify
closed abelian subgroups satisfying the condition $(\ast)$ of the automorphism group of any other compact simple
Lie algebra.

%This paper is also a preparation of the classification for general simple Lie groups.
%Since $F$ is assumed to be an abelian subgroup, the condition $(*)$ is
%equivalent to $\frg_0^{F}=\Lie F$. In particular, when $F$ is a finite abelian subgroup, the
%condition $(*)$ is equivalent to $\frg_0^{F}=0$.

The method of this paper is through linear algebra. We have four cases to consider: subgroups of the projective
unitary group $\PU(n)$; of the projective orthogonal group $\O(n)/\langle-I\rangle$; of the projective symplectic
group $\Sp(n)/\langle-I\rangle$; and subgroups of $\PU(n)\rtimes\langle\tau_0\rangle$
($\tau_0=\textrm{complex conjugation}$) not contained in $\PU(n)$. The method of classification for abelian
subgroups of $\PU(n)$ is as follows. Given a closed abelian subgroup $F$ of $\PU(n)$, we define an antisymmetric
bimultiplicative function \[m: F\times F\longrightarrow\U(1)=\{z\in\bbC:|z|=1\}\] by $m(x,y)=\lambda$ for any
$x=[A],y=[B]\in F$, $A,B\in\U(n)$ with $ABA^{-1}B^{-1}=\lambda I$. Let $\ker m=\{x\in F:m(x,y)=1,\forall y\in F\}$,
which is a subgroup of $F$. By linear algebra, $(F/\ker m,m)$ determines and is determined by certain positive
integers $(n_1,n_2,\dots,n_{s})$ with $n_{i+1}|n_{i}$ for any $1\leq s-1$ and $n_1n_2\cdots n_{s}\mid n$. We show
that these integers also determine the conjugacy class of $F$ if it satisfies the condition $(*)$. For a closed
abelian subgroup $F$ of $\O(n)/\langle-I\rangle$ (or $\Sp(n)/\langle-I\rangle$), similarly we define an antisymmetric
bimultiplicative function $m: F\times F\longrightarrow\{\pm{1}\}$. With this, we have a subgroup $\ker m$ of $F$.
Moreover we define a subgroup $B_{F}$ of $\ker m$, which is always a diagonalizable subgroup. Using it, $F$ is
expressed in a blockwise form. We are able to describe the image of the projection of $F$ to each component if
it satisfies the condition $(*)$; and we describe the conjugacy class of $F$ in terms of the integer
$k=\frac{1}{2}\rank(F/\ker m)$, the conjugacy class of $B_{F}$ and a combinatorial datum arising from $m$ and
the images of projections of $F$ to block components. Such combinatorial data are generalization of symplective
metric spaces studied in \cite{Yu}, which arose from the study of elementary abelian $2$-subgroups. Given a closed
abelian subgroup $F$ of $\PU(n)\rtimes\langle\tau_0\rangle$ not contained in $\PU(n)$, we show that $F$ lifts to
a closed abelian subgroup $F'$ of $(\U(n)/\langle-I\rangle)\rtimes\langle\tau\rangle$ with the conjugacy class of
$F$ and the conjugacy class of $F'$ determine each other. Then, the classification is similar to the classification
of closed abelian subgroups of $\O(n)/\langle-I\rangle$ satisfying the condition $(\ast)$.

%let $H_{F}=F\cap\PU(n)$. We
%associate a multiplicative antisymmetric function \[m: H_{F}\times H_{F}\longrightarrow\{\pm{1}\}.\]
%Moreover we define a subgroup $B_{F}\subset\U(n)$ using $\ker m$ and an element in $F-H_{F}$, which
%does not depend on the choice of this element in $F-H_{F}$. The subgroup $B_{F}$ is always a
%diagonalizable subgroup. Using it $F$ is expressed in a block-like form (cf. Proof of Proposition
%\ref{P:TA-2}). We are able to describe each block component if $F$ satisfies the condition $(*)$.
%Moreover we show that, the conjugacy class of $F$ is determined by the conjugacy class of $B_{F}$
%and certain combinatorial data (cf. Propositions \ref{P:TA-5}). A technical part in this case is,
%we show that $F$ is the image of the projection of an abelian subgroup $F'$ of
%$(\U(n)/\langle-I\rangle)\rtimes\langle\tau\rangle$; and the conjuagacy of $F'$ is uniquely
%determined by the conjugacy class of $F$. Here $\tau^2=1$ and $\tau[X]\tau^{-1}=[\overline{X}]$ for
%any $X\in\U(n)$. The study of abelian subgroups $F'$ of
%$(\U(n)/\langle-I\rangle)\rtimes\langle\tau\rangle$ is more convenient than the study of abelian
%subgroups $F$ of $\PU(n)\rtimes\langle\tau_0\rangle$.

Given a compact simple Lie algebra $\frg_0$ with a complexification $\frg=\frg_0\otimes_{\bbR}\bbC$,
conjugacy classes of compact subgroups of $\Aut(\frg)$ and that of $\Aut(\frg_0)$ are in one-to-one
correspondence (cf. \cite{An-Yu-Yu}, Section 8). On the other hand, conjugacy classes of maximal
compact abelian subgroups of $\Aut(\frg)$ are in one-to-one correspondence with isomorphism classes of
fine group gradings of $\frg$ (cf. \cite{Elduque-Kochetov2}, Section 2). The study of group gradings was
initiated in \cite{PZ}. The fine group gradings of classical simple Lie algebras are classified in
\cite{Bahturin-Kochetov} and \cite{Elduque}. Our approach through the study of maximal abelian subgroups
and the method employed here have
several advantages. First the treatment is uniform, which enables us to show relations between abelian subgroups
of different groups and similarity between the shape of the sets of abelian subgroups of them. Second the
fusion of abelian subgroups are clearly understood from our classification, with which we are able to describe
the Weyl groups $W(F)=N_{G}(F)/C_{G}(F)$ of maximal abelian subgroups $F$. In turn, our classification gives
a better understanding of maximal abelian subgroups and the associated group gradings.

\smallskip

\noindent{\it Notation and conventions.} Given a Lie group $G$, write $Z(G)$ for the center of $G$ and
$G_0$ for the neutral component of $G$. Given a subgroup $H$ of $G$, let $C_{G}(H)$ denote the centralizer
of $H$ in $G$ and $N_{G}(H)$ denote the normalizer of $H$ in $G$. For a subset $X$ of $G$, let
$\langle X\rangle$ denote the subgroup of $G$ generated by elements in $X$. For a quotient group $G=H/N$, let
$[x]=xN$ ($x\in H$) denote a coset. For a compact semisimple real Lie algebra $\frg_0$, let $\Aut(\frg_0)$ be the
group of automorphisms of $\frg_0$ and $\Int(\frg_0)=\Aut(\frg_0)_0$ be the group of inner automorphisms.
Denote by $\Z_{m}=\{\lambda I_{m}:\ |\lambda|=1\}$, which is the center of the unitary group $\U(n)$.
Let $I_{n}$ be the $n\times n$ identity matrix. We define the following matrices,
\[I_{p,q}=\left(\begin{array}{cc} -I_{p}&0\\ 0&I_{q}\\\end{array}\right),\quad
J_{n}=\left(\begin{array}{cc} 0&I_{n}\\ -I_{n}&0\\\end{array} \right),\quad
\J'_{n}=\left(\begin{array}{cc}0&\I_{n}\\\I_{n}&0\\\end{array}\right),\]
%K_{n}=\left(\begin{array}{cccc}0&\I_{n}&0&0\\-\I_{n}&0&0&0\\0&0&0&-\I_{n}\\
%0&0&\I_{n}&0\\\end{array}\right).
$$K_{n}=\left(\begin{array}{cccc} 0&0&0&I_{n}\\0&0&-I_{n}&0\\ 0&I_{n}&0&0\\-I_{n}&0&0&0\\\end{array}\right).$$

\smallskip

%\noindent{\it Acknowledgement.} The author thanks Mikhail Kochetov for careful reading on
%an early version of this manuscript and for pointing out a mistake of missing a maximal finite abelian
%subgroup in the $\E_6$ case. He thanks Professor David Vogan for some advice on the writing of this paper.

%\section{Matrix groups}

\section{Projective unitary groups}\label{SS:A}

Let $\bbR$, $\bbC$, $\bbH$ be the set of real numbers, complex numbers and quaternion numbers respectively, which
is either a field or a division ring. For $F$=$\bbR$, $\bbC$ or $\bbH$, let $\M_{n}(F)$ be the set of $n\times n$
matrices with entries in $F$. Let \begin{eqnarray*} &&\O(n)=\{X\in \M_{n}(\bbR): XX^{t}=\I\},\quad
\SO(n)=\{X\in \O(n): \det X=1\},\\&& \U(n)=\{X\in \M_{n}(\bbC): XX^{\ast}=\I\},\quad
\SU(n)=\{X\in \U(n): \det X=1\}, \\&& \Sp(n)=\{X\in \M_{n}(\bbH): XX^{\ast}=\I\}.\end{eqnarray*} Here $X^{t}$
denotes the transposition of a matrix $X$ and $X^{\ast}$ denotes the conjugate transposition of $X$. Defined as
sets in this way, $\O(n)$, $\SO(n)$, $\U(n)$, $\SU(n)$, $\Sp(n)$ are actually Lie groups, i.e., groups with a
smooth manifold structure. Moreover, they are compact Lie groups, i.e., the underlying manifolds are compact.
Also let $\PO(n)$, $\PSO(n)$, $\PU(n)$, $\PSU(n)$, $\PSp(n)$ be the quotients of the groups $\O(n)$, $\SO(n)$,
$\U(n)$, $\SU(n)$, $\Sp(n)$ modulo their centers. Let \begin{eqnarray*}&&\mathfrak{so}(n)=
\{X\in M_{n}(\bbR): X+X^{t}=0\},\\&& \mathfrak{su}(n)=\{X\in M_{n}(\bbC): X+X^{\ast}=0, \tr X=0\},\\&&
\mathfrak{sp}(n)=\{X\in M_{n}(\bbH): X+X^{\ast}=0\},\end{eqnarray*} Then, $\mathfrak{so}(n)$, $\mathfrak{su}(n)$,
$\mathfrak{sp}(n)$ are Lie algebras of $\SO(n)$, $\SU(n)$, $\Sp(n)$ respectively. They represent all isomorphism
classes of compact classical simple Lie algebras.

\smallskip

Let $G=\PU(n)=\U(n)/\Z_{n}$, the projective unitary group of degree $n$. Let $F$ be a closed abelian subgroup of
$G$. For any $x,y\in F$, choose $A,B \in\U(n)$ representing $x,y$. That is, $x=[A]=A\Z_{n}$ and $y=[B]=B\Z_{n}$.
Since $1=[x,y]=[A,B]\Z_{n}$, $[A,B]=\lambda_{A,B} I$ for a complex number $\lambda_{A,B}$ with $|\lambda_{A,B}|=1$.
It is clear that the number $\lambda_{A,B}$ depends only on $x,y$, not on the choice of $A$ and $B$. By this, we
define a map $m: F\times F\longrightarrow\U(1)$ by $m(x,y)=\lambda_{A,B}$. Since
$$1=\det ABA^{-1}B^{-1}=\det(\lambda_{A,B}I)=(\lambda_{A,B})^{n},$$ $m(x,y)=\lambda_{A,B}=e^{\frac{2k\pi i}{n}}$ for
some integer $k$. The conclusion of the following lemma is clear.

\begin{lemma}\label{L:A-m}
The function $m$ is antisymmetric and bimultiplicative. That means, $m(x,x)=1$, $m(x,y)=m(y,x)^{-1}$ and
$m(xy,z)=m(x,z)m(y,z)$ for any $x,y,z\in F$.
\end{lemma}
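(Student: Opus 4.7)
The plan is to verify each of the three listed identities directly from the definition, by working with arbitrary unitary lifts $A,B,C \in \U(n)$ of $x,y,z \in F$ and using the commutation relation $AB = \lambda_{A,B} BA$, which is an equivalent reformulation of $[A,B] = \lambda_{A,B} I$. Before starting, I would note that $\lambda_{A,B}$ is indeed independent of the chosen lifts: replacing $A$ by $\mu A$ or $B$ by $\nu B$ with $\mu,\nu \in \U(1)$ leaves $[A,B]$ unchanged because central scalars commute with everything. So $m$ is well-defined, and we may freely choose any convenient lifts.

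For the first identity $m(x,x)=1$, pick a single lift $A$ of $x$; then $[A,A]=AAA^{-1}A^{-1} = I$, so $\lambda_{A,A} = 1$.

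For the antisymmetry $m(x,y) = m(y,x)^{-1}$, observe that $[B,A] = (ABA^{-1}B^{-1})^{-1} = (\lambda_{A,B} I)^{-1} = \lambda_{A,B}^{-1} I$, so $\lambda_{B,A} = \lambda_{A,B}^{-1}$.

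For bimultiplicativity $m(xy,z)=m(x,z)m(y,z)$, take lifts $A,B,C$ of $x,y,z$, so that $AB$ is a lift of $xy$. Then I would compute
\[
[AB,C] = ABCB^{-1}A^{-1}C^{-1} = A(\lambda_{B,C} C)A^{-1}C^{-1} = \lambda_{B,C}\lambda_{A,C} I,
\]
using $BCB^{-1} = \lambda_{B,C} C$ in the first step and $ACA^{-1}C^{-1} = \lambda_{A,C}I$ in the second, both of which are just restatements of the defining commutation relations. Hence $m(xy,z) = \lambda_{A,C}\lambda_{B,C} = m(x,z)m(y,z)$. Bimultiplicativity in the second variable then follows by combining this with antisymmetry, or equally well by the symmetric computation.

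There is no real obstacle here; the statement is a direct verification and the only subtle point is checking that $\lambda_{A,B}$ depends only on the cosets $[A],[B]\in\PU(n)$, which is handled once and for all by the remark above about central scalars.
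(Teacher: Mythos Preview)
Your proof is correct and is exactly the direct verification the paper has in mind; the paper itself omits the argument entirely, saying only that ``the conclusion of the following lemma is clear.'' Your computation of $[AB,C]$ is clean and your remark on well-definedness is the only point worth noting explicitly.
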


%\begin{proof}
%$m(x,x)=1$ and $m(x,y)=m(y,x)^{-1}$ are clear.

%Choose $A,B,C\in\U(n)$ representing $x,y,z$. Let $[A,C]=\lambda_1 I$, $[B, C]=\lambda_2 I$
%for some numbers $\lambda_1,\lambda_2\in\U(1)$ . Then we have
%\[[AB,C]=A[B,C]A^{-1}[A,C]=A(\lambda_2 I)A^{-1}(\lambda_1 I)=(\lambda_1 \lambda_2)I.\]
%So $m(xy,z)=m(x,z)m(y,z)$.
%\end{proof}

Let $\ker m=\{x\in F: m(x,y)=1,\forall y\in F\}$. It is a subgroup of $F$ and the induced antisymmetric
bimultiplicative function $m$ on $F/\ker m$ is nondegenerate.

\begin{lemma}\label{L:A-kerm}
If $F$ is a closed abelian subgroup of $\PU(n)$ satisfying the condition $(\ast)$, then $\ker m=F_0$.
\end{lemma}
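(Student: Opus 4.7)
The plan is to establish $\ker m = F_0$ by proving the two containments separately. The direction $F_0 \subseteq \ker m$ will follow from connectedness alone; the reverse $\ker m \subseteq F_0$ will use condition $(\ast)$ in an essential way, via the representation theory of the preimage $\tilde F = \pi^{-1}(F) \subseteq \U(n)$, where $\pi: \U(n) \to \PU(n)$ denotes the canonical projection.

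For $F_0 \subseteq \ker m$: fixing $y = [B] \in F$ with $B \in \U(n)$, the map $x \mapsto m(x,y)$ on $F_0$ is a continuous group homomorphism into $\U(1)$ (via local continuous sections of $\pi$), whose image lies in the finite group $\mu_n$ of $n$-th roots of unity, since $m(x,y)^n = \det[A,B] = 1$. Connectedness of $F_0$ then forces the image to be trivial.

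For the reverse, I will set $\tilde K = \pi^{-1}(\ker m)$ and $T = \pi^{-1}(F_0)$. A direct check shows $\tilde K = Z(\tilde F)$ (hence $\tilde K$ is abelian) and $\tilde K_0 = T$ by the first containment, so the claim reduces to showing $\tilde K$ is connected. The idea is to decompose $\bbC^n = \bigoplus_\chi V_\chi$ as a $\tilde K$-module, where $\chi$ ranges over characters of $\tilde K$ restricting to the standard character on $\Z_n$; each $V_\chi$ is $\tilde F$-stable, and the induced projective action of $F/\ker m = \tilde F/\tilde K$ on $V_\chi$ has cocycle determined by the nondegenerate pairing $m$. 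Applying Stone--von Neumann to the Heisenberg-type extension $\tilde F/\ker\chi$ (a central extension of $F/\ker m$ by $\U(1)$) will yield a unique irreducible of dimension $k = \sqrt{|F/\ker m|}$, so $\dim V_\chi = k m_\chi$ with integers $m_\chi \geq 1$. A Schur computation of the commutant then gives $\dim_\bbR \mathfrak u(n)^{\tilde F} = \sum_\chi m_\chi^2$, and combining with $\mathfrak u(n)^{\tilde F} = \mathfrak{su}(n)^F \oplus i\bbR I$ and condition $(\ast)$ produces
\[ \sum_\chi m_\chi^2 \;=\; \dim F + 1 \;=\; \dim T. \]

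Letting $r$ be the number of characters $\chi$ appearing, the faithful $T$-action on $\bbC^n$ via the restrictions $\chi|_T$ forces $r \geq \dim T$ (the characters must separate points of $T$); combined with $m_\chi \geq 1$ and the displayed equality this gives $r = \dim T$ and $m_\chi = 1$ for every $\chi$. The last step is to note that the map $\tilde K \to \U(1)^r$, $t \mapsto (\chi_1(t),\ldots,\chi_r(t))$, is injective (by faithfulness of the $\tilde K$-action on $\bbC^n$) and a continuous homomorphism between compact abelian Lie groups of equal dimension $r = \dim \tilde K$; since $\U(1)^r$ is connected the image must be the whole target, so $\tilde K \cong \U(1)^r$ is connected, forcing $\tilde K = T$ and $\ker m = F_0$. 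The main obstacle will be the invocation of Stone--von Neumann in this Lie-group setting, where $\tilde K$ is positive-dimensional rather than finite; however, after fixing $\chi$ and passing to $\tilde F/\ker\chi$, the center becomes precisely $\U(1)$ and one recovers a genuine compact Heisenberg extension of the finite abelian group $F/\ker m$ by $\U(1)$, to which the classical argument applies.
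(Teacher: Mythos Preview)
Your proof is correct, but it takes a substantially more elaborate route than the paper's. The paper argues element-by-element: given $x\in\ker m$, diagonalize a lift $A\in\U(n)$, observe that $x\in\ker m$ forces $F\subset\U(n)^{A}/\Z_n$, and then condition $(\ast)$ (which is equivalent to $F_0=C_G(F)_0$) gives $Z(\U(n)^{A}/\Z_n)_0\subset F_0$; since $A$ lies in the connected center of its own centralizer, $x\in F_0$. This is a three-line argument using nothing beyond the structure of centralizers of diagonal matrices.

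Your approach instead treats all of $\tilde K=\pi^{-1}(\ker m)$ at once: you decompose $\bbC^n$ into $\tilde K$-isotypic pieces, invoke the Stone--von Neumann theorem for the finite Heisenberg extension $\tilde F/\ker\chi$ to get uniform irreducible dimension $\sqrt{|F/\ker m|}$ on each piece, and then turn condition $(\ast)$ into the numerical identity $\sum_\chi m_\chi^2=\dim T$, which combined with $r\geq\dim T$ forces $r=\dim T$ and all $m_\chi=1$, hence $\tilde K\cong\U(1)^r$. This is correct and in fact yields more: the multiplicity-one statement and the isotypic count anticipate the structure theorem of Proposition~\ref{P:A}. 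But for the lemma itself it is considerable machinery where the paper needs only the observation that a diagonal matrix is in the connected center of its centralizer.
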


\begin{proof}
Since $m$ is a continuous map with finite image, one has $F_0\subset\ker m$. For any $x\in\ker m$,
substituting $F$ by a subgroup conjugate to it if necessary, we may assume that $x=A\Z_{n}$ for some
$A=\diag\{\lambda_1 I_{n_1},\lambda_2 I_{n_2},\dots,\lambda_{s}I_{n_{s}}\},$ where $n_1,\dots,n_{s}\in\bbZ$,
$n_1+\cdots+n_{s}=n$, and $\lambda_1,\dots,\lambda_{s}$ are distinct nonzero complex numbers. Since
$x\in\ker m$, one has $F\subset \U(n)^{A}/\Z_{n}$. From the condition of $\dim\frg_0^{F}=\dim F$, one gets
$Z(\U(n)^{A}/\Z_{n})_0\subset F_0$. Since $\U(n)^{A}=\U(n_1)\times\cdots\times\U(n_{s})$ and
$Z(\U(n)^{A}/\Z_{n})_0=(\Z_{n_1}\times\cdots\times\Z_{n_2})/\Z_{n}$, we get
$x\in Z(\U(n)^{A}/\Z_{n})_0\subset F_0$.
\end{proof}

Given a positive integer $k$ and a multiple $n=mk$ of $k$, define a subgroup $H_{k}$ of $\PU(n)$ by \[H_{k}
=\langle[\diag\{I_{m},\omega_{k}I_{m},\cdots,\omega_{k}^{k-1}I_{m}\}],[\left(\begin{array}{ccccc}0&I_{m}&
\cdots&0\\0&0&\ddots&\ddots\\\vdots&\ddots&\ddots&I_{m}\\I_{m}&0&\cdots&0\end{array}\right)]\rangle.\]

\begin{prop}\label{P:A}
For a closed abelian subgroup $F$ of $G$ satisfying the condition $(*)$, there exists positive integers
$n_1\geq n_2\geq\dots\geq n_{s}\geq 2$ with $n_{i+1}|n_{i}$ for any $1\leq i\leq s-1$ and
$n_1n_2\cdots n_{s}|n$ such that $F$ admits a decomposition $F=H_{n_1}\times \cdots H_{n_{s}}\times T$,
with $T$ a torus of dimension $m-1$, where $m=\frac{n}{n_1n_2\cdots n_{s}}$. Moreover, the conjugacy class
of $F$ is uniquely determined by the positive integers $(n_1,\dots,n_{s})$.
\end{prop}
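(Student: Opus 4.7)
The plan is to combine the classification of symplectic finite abelian groups with a Stone--von Neumann-type analysis of the Heisenberg action on $\bbC^n$, and then to exploit condition $(\ast)$ to pin down a maximal torus of commuting operators.

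First I would pass to the preimage $\tilde F \subset \U(n)$ of $F$ under $\pi: \U(n)\to\PU(n)$. All commutators in $\tilde F$ land in $\Z_n$, so $\tilde F$ is two-step nilpotent, and a direct check shows $Z(\tilde F) = \pi^{-1}(\ker m) = \pi^{-1}(F_0)$, the last equality being Lemma~\ref{L:A-kerm}. The induced pairing $m$ on $\tilde F/Z(\tilde F) = F/F_0$ is nondegenerate antisymmetric bimultiplicative; invoking the classification of such pairings on finite abelian groups I would decompose
\[F/F_0 \;\cong\; \prod_{i=1}^{s}\bigl(\bbZ/n_i\bbZ\bigr)^2,\qquad n_{i+1}\mid n_i,\quad n_i\geq 2,\]
with $(n_1,\dots,n_s)$ uniquely determined by $F$. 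Lifting standard symplectic generators of the $i$-th summand to $A_{i,1},A_{i,2}\in\U(n)$ (rescaled so that $A_{i,j}^{n_i}=I$) gives $[A_{i,1},A_{i,2}]=\omega_{n_i}I$ and $[A_{i,j},A_{i',j'}]=I$ for $i\neq i'$; together with $\Z_n$ they generate a subgroup $\tilde H\subset\tilde F$ which is a central product of $s$ finite Heisenberg groups sharing the common scalar center. The finite Heisenberg Stone--von Neumann theorem, applied with the tautological central character $\lambda I\mapsto\lambda$ on $\Z_n$, provides a unique irreducible $\tilde H$-module of dimension $n_1\cdots n_s$, so $\bbC^n\cong W\otimes V_0$ as $\tilde H$-modules with $\dim V_0 = n_1\cdots n_s$, $\tilde H$ acting as $I_W\otimes\pi_0$, and $W$ a multiplicity space of dimension $m = n/(n_1\cdots n_s)$. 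In particular $n_1\cdots n_s\mid n$.

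The decisive step, and the one I expect to be the principal obstacle, is translating $(\ast)$ into the statement that the image of $Z(\tilde F)$ fills out a maximal torus of $\U(W)$. Since $Z(\tilde F)$ commutes with $\tilde H$, Schur's lemma forces its image to lie in $\U(W)\otimes I_{V_0}$ and gives an injection $Z(\tilde F)\hookrightarrow\U(m)$ with image $\bar Z$; one then has
\[C_{\U(n)}(\tilde F) \;=\; C_{\U(m)}(\bar Z)\otimes I_{V_0}.\]
Because $\bar Z$ is a closed abelian subgroup of $\U(m)$, simultaneous diagonalisation places it in some maximal torus $T_0\cong\U(1)^m$, so $T_0\subset C_{\U(m)}(\bar Z)$. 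Condition $(\ast)$ reads $\dim C_{\U(m)}(\bar Z) = \dim Z(\tilde F) = \dim\bar Z$, and the chain
\[m = \dim T_0 \leq \dim C_{\U(m)}(\bar Z) = \dim\bar Z \leq \dim T_0 = m\]
forces $\bar Z = T_0$, a maximal torus of dimension $m$. Choosing a basis of $W$ diagonalising $\bar Z$ and matching $\tilde F = \bigl(\U(1)^m\otimes I_{V_0}\bigr)\cdot\bigl(I_W\otimes\pi_0(\tilde H)\bigr)$ with the explicit generators in the definition of $H_k$ then yields $F = H_{n_1}\times\cdots\times H_{n_s}\times T$, where $T$ is the image of the diagonal maximal torus, a torus of dimension $m-1$.

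For uniqueness of the conjugacy class, the invariants $(n_1,\dots,n_s)$ are intrinsic to $(F/F_0,m)$. Conversely, given them, the Heisenberg irrep $V_0$ is unique up to isomorphism, the identification $\bbC^n\cong W\otimes V_0$ is determined up to $\U(W)$, and a diagonalising basis of $W$ is unique up to the Weyl group of a maximal torus of $\U(m)$; each of these ambiguities corresponds to a $\U(n)$-conjugation. Hence the $\PU(n)$-conjugacy class of $F$ depends only on $(n_1,\dots,n_s)$.
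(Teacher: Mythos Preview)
Your proof is correct and takes a genuinely different route from the paper's. The paper argues by induction on $|F/F_0|$: it picks $x_1,y_1\in F$ with $m(x_1,y_1)$ of maximal order $n_1$, shows by hand that $m(x,y)^{n_1}=1$ for all $x,y$, uses divisibility of the torus $F_0$ to adjust $x_1,y_1$ to have exact order $n_1$, splits off $\langle x_1,y_1\rangle\cong H_{n_1}$ as a direct factor, and descends to $\PU(n/n_1)$ to continue. In effect the paper proves the symplectic classification of $(F/F_0,m)$ inline while peeling off one Heisenberg factor at a time, obtaining $n_{i+1}\mid n_i$ as a byproduct of the maximal-order argument. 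You instead invoke that classification globally at the outset, then apply the Stone--von~Neumann theorem for finite Heisenberg groups to obtain the tensor decomposition $\bbC^n\cong W\otimes V_0$ in one stroke, after which condition $(\ast)$ forces $\bar Z$ to be a full maximal torus of $\U(W)$. Your approach is more structural and makes the divisibility $n_1\cdots n_s\mid n$ transparent from the dimension of the unique Heisenberg irreducible, at the cost of importing two external results; the paper's version is entirely self-contained and elementary. One small point worth tightening: your chain of inequalities yields $\dim\bar Z=m$, but to conclude $\bar Z=T_0$ rather than merely $\bar Z_0=T_0$ you should note that $\bar Z\cong Z(\tilde F)=\pi^{-1}(F_0)$ is connected, being the preimage of a connected set under a map with connected fibre $\Z_n$.
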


\begin{proof}
We prove it by induction on the order of $F/F_0$. If $|F/F_0|=1$, i.e., $F$ is connected, it must be a maximal
torus of $G$, which corresponds to the case of $s=0$ and $m=n$ in the conclusion. In general, choose $x_1,y_1
\in F$ such that $m(x_1,y_1)$ is of maximal order. Let $n_1=o(m(x_1,y_1))$ and \[F_1=\{x\in F: m(x,x_1)=
m(x,y_1)=1\}.\] We first show that: for any $x,y\in F$, $m(x,y)^{n_1}=1$; there exists $x'_1\in x_1F_0$ and
$y'_1\in y_1F_0$ such that $o(x'_1)=o(y'_1)=n_1$; for any of such $x'_1,y'_1$, $F$ admits a decomposition
$F=\langle x'_1,y'_1\rangle\times F_1$. Let $z=m(x_1,y_1)$. Since $m$ is bimultiplicative and $m(x_1,y_1)$ is
of maximal order, $\{m(x_1,\cdot)\}=\{m(\cdot,y_1)\}=\langle z\rangle.$ Then, for any other $x\in F$, there
exists integers $a,b$ such that $m(x_1,x)=z^{a}$ and $m(x,y_1)=z^{b}$. Hence $x'=xx_1^{-b}y_1^{-a}\in F_1$.
Therefore $F=\langle x_1,y_1\rangle\cdot F_1$. Suppose there exists $x,y\in F$ such that $m(x,y)^{n_1}\neq 1$.
Let $x=x_1^{a_1}y_1^{b_1}x_2$ and $y=x_1^{a_2}y_1^{b_2}y_2$ for some $a_1,a_2,b_1,b_2\in\bbZ$ and $x_2,y_2
\in F_1$. Then $m(x,y)=m(x_1,y_1)^{a_1b_2-a_2b_1}m(x_2,y_2)$. Hence $m(x_2,y_2)^{n_1}\neq 1$. Let $z'=
m(x_2,y_2)$. Thus \[m(x_1x_2,y_1^{a}y_2^{b})=m(x_1,y_1)^{a}m(x_2,y_2)^{b}=z^{a}z'^{b}.\] Since $z'^{n_1}
\neq 1$, we can choose $a,b\in\bbZ$ such that $o(z^{a}z'^{b})>n_1$, which contradicts the assumption that
$m(x_1,y_1)$ is of maximal order. Hence $m(x,y)^{n_1}=1$ for any $x,y\in F$. For any $y\in F$, we have
$m(x_1^{n_1},y)=m(x_1,y)^{n_1}=1$ by the first statement proved above. Then, $x_1^{n_1}\in\ker m=F_0$ by Lemma
\ref{L:A-kerm}. As $F_0$ is a torus, there exists $x'_1\in x_1 F_0$ with $(x'_1)^{n_1}=1$. Similarly there
exists $y'_1\in y_1 F_0$ with $(y'_1)^{n_1}=1$. On the other hand, one has $n_1|o(x'_1)$ and $n_1|o(y'_1)$ since
$m(x'_1,y'_1)=z$ has order $n_1$. For any of such $x'_1,y'_1$, $F=\langle x'_1,y'_1\rangle\cdot F_1$ is proved
above. If $x_{1}'^{a} y_{1}'^{b}\in F_1$ for some $a,b\in\bbZ$, then $1=m(x_{1}'^{a} y_{1}'^{b},y'_1)=z^{a}$.
Hence $n_1|a$. Similarly we have $n_2|b$. Therefore $x_{1}'^{a}y_1'^{b}=1$. By this we get $F=\langle x'_1,y'_1
\rangle\times F_1$. With this, we may assume that $o(x_{1})=o(y_{1})=o(x_{1}F_1)=o(y_{1}F_1)=n_1$ and
$F=\langle x_1,y_1\rangle\times F_1$. We may assume that $[x_1,y_1]=(\omega_{n_1})^{-1}$. Then, one can show
that $(x_1,y_1)\sim([A_1],[B_1])$, where \[A_{n_1}=\diag\{I_{m_1},\omega_{n_1}I_{m_1},\dots,
\omega_{n_1}^{n_1-1}I_{m_1}\}\] and \[B_{n_1}=\left(\begin{array}{ccccc}0_{m_1}&I_{m_1}&0_{m_1}&\cdots&0_{m_1}
\\0_{m_1}&0_{m_1}&I_{m_1}&\cdots&0_{m_1}\\&&&\vdots&\\0_{m_1}&0_{m_1}&0_{m_1}&\cdots&I_{m_1}\\I_{m_1}&0_{m_1}
&0_{m_1}&\cdots&0_{m_1}\\\end{array}\right),\] $m_1=\frac{n}{n_1}$. Then, $F_1\subset(\U(n)^{\langle A_1,B_1
\rangle})/\Z_{n}\cong\U(m_1)/\Z_{m_1}$ and $|F_1/(F_1)_0|=\frac{|F/F_0|}{n_{1}^{2}}<|F/F_0|$. By induction we
finish the proof. One has $n_2|n_1$ since $m(x,y)^{n_1}=1$ for any $x,y\in F$. Similarly we have $n_{i+1}|n_{i}$
for any $1\leq i\leq s-1$.
\end{proof}

Given a sequence $\vec{a}=(n_1,\dots,n_{s})$ with $n_{i+1}|n_{i}$ for any $1\leq i\leq s-1$, let $V_{\vec{a}}=
H_{n_1}\times\cdots\times H_{n_{s}}$ be an abelian subgroup with an antisymmetric bimultiplicative function
induced from the functions on $\{H_{n_{j}}: 1\leq s\}$ such that $H_{n_{i}}, H_{n_{j}}$ are orthogonal for any
$i\neq j$. Let $\Sp(V_{\vec{a}})$ be the group of automorphisms of $V_{\vec{a}}$ preserving the function on it.
Denote by $$F_{\vec{a},m}=H_{n_1}\times\cdots\times H_{n_{s}}\times T_{m},$$ where $H_{n_{i}}\subset\PU(n_{i})$
as defined above and $T_{m}$ is a maximal torus of $\PU(m)$, $n=n_1\times\cdots\times n_{s}\times m$, also
we regard $\U(n_1)\times\cdots\U(n_{s})\times\U(m)$ as a subgroup of $\U(n)$ by tensor product. By Proposition
\ref{P:A}, any closed abelian subgroup of $\PU(n)$ is conjugate to some $F_{\vec{a},m}$. Apparently
$F_{\vec{a},m}$ is a maximal abelian subgroup of $\PU(n)$.

%define a symplectic group $V_{\vec{a},m}$ and $\Sp(V_{\vec{a},m})$.

\begin{prop}\label{P:Auto-A}
One has $W(F_{\vec{a},m})=\Hom(V_{\vec{a}},\U(1)^{m}/\Z_{m})\rtimes(S_{m}\times\Sp(V_{\vec{a}}))$.
\end{prop}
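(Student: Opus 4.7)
The plan is to compute $N_{G}(F_{\vec{a},m})$ explicitly and then quotient by $C_{G}(F_{\vec{a},m})$. Since $F_{\vec{a},m}$ is maximal abelian in $G=\PU(n)$, one has $C_{G}(F_{\vec{a},m})=F_{\vec{a},m}$; this can also be verified directly by writing $\bbC^{n}=\bbC^{N}\otimes\bbC^{m}$ with $N=n_{1}\cdots n_{s}$, applying Schur's lemma to the irreducible action of $V_{\vec{a}}=H_{n_{1}}\times\cdots\times H_{n_{s}}$ on $\bbC^{N}$ to deduce $C_{\PU(n)}(V_{\vec{a}})=V_{\vec{a}}\cdot(I_{N}\otimes\PU(m))$, and intersecting with $C_{\PU(m)}(T_{m})=T_{m}$.

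First I would exhibit representatives of the three claimed factors of the Weyl group. A copy of $\Sp(V_{\vec{a}})$ comes from $N_{\PU(N)}(V_{\vec{a}})\otimes I_{m}$ via the $m=1$ case $W(V_{\vec{a}},\PU(N))=\Sp(V_{\vec{a}})$. A copy of $S_{m}$ is realized by permutation matrices $I_{N}\otimes P_{\sigma}$ sitting in the $\PU(m)$-factor of $C_{G}(V_{\vec{a}})$, acting on $T_{m}$ by the usual Weyl action. A copy of $\Hom(V_{\vec{a}},\U(1)^{m}/\Z_{m})$ is constructed as follows: any such $\phi$ may be encoded by a tuple $(v_{1},\dots,v_{m})\in V_{\vec{a}}^{m}$ via $\phi(v)=(m(v_{i},v))_{i=1}^{m}$. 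Choosing lifts $\tilde v_{i}\in\U(N)$ of $v_{i}$ and setting $g_{\phi}=\diag(\tilde v_{1},\dots,\tilde v_{m})\in\U(n)$, a block calculation shows $g_{\phi}(v\otimes I_{m})g_{\phi}^{-1}=\phi(v)\cdot(v\otimes I_{m})$ and $g_{\phi}(I_{N}\otimes D)g_{\phi}^{-1}=I_{N}\otimes D$ for any diagonal $D\in\U(m)$, so $[g_{\phi}]\in N_{G}(F_{\vec{a},m})$ realizes the twist $(v,t)\mapsto(v,\phi(v)t)$.

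Next I would show that every $w\in N_{G}(F_{\vec{a},m})$ is, modulo $F_{\vec{a},m}$, a product of elements of these three types. Conjugation by $w$ preserves the bimultiplicative form $m$ on $V_{\vec{a}}=F_{\vec{a},m}/(F_{\vec{a},m})_{0}$, so its induced action lies in $\Sp(V_{\vec{a}})$; multiplying by the inverse of an $\Sp(V_{\vec{a}})$-representative, we may assume $w$ acts trivially on $V_{\vec{a}}$ modulo $T_{m}$. Then $v\mapsto wvw^{-1}v^{-1}\in T_{m}$ is a homomorphism $V_{\vec{a}}\to T_{m}$; multiplying by the inverse of the corresponding $g_{\phi}$, we may further assume $w$ centralizes $V_{\vec{a}}$. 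By $C_{G}(V_{\vec{a}})=V_{\vec{a}}\cdot\PU(m)$ and $V_{\vec{a}}\subset F_{\vec{a},m}$, the remaining element lies modulo $F_{\vec{a},m}$ in $N_{\PU(m)}(T_{m})/T_{m}=S_{m}$.

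Finally, to verify the semi-direct product structure, observe that the representatives of $\Sp(V_{\vec{a}})$ sit in $\U(N)\otimes I_{m}$ and those of $S_{m}$ in $I_{N}\otimes\U(m)$, so they commute in $\PU(n)$; both normalize the $\Hom$-factor, with $\Sp(V_{\vec{a}})$ acting by precomposition on $V_{\vec{a}}$ and $S_{m}$ acting by coordinate permutation on $\U(1)^{m}/\Z_{m}$. The main technical hurdle is establishing the $m=1$ input $W(V_{\vec{a}},\PU(N))=\Sp(V_{\vec{a}})$: this reduces, via the orthogonal decomposition $V_{\vec{a}}=\bigoplus_{i}H_{n_{i}}$ together with symplectic symmetries mixing isomorphic $H_{n_{i}}$'s (implemented by block permutations in $\U(N)$), to the case $s=1$, where $W(H_{k},\PU(k))=\SL_{2}(\bbZ/k)$ follows by exhibiting a discrete-Fourier-type element of $\U(k)$ realizing the standard generator $\left(\begin{smallmatrix}0&1\\-1&0\end{smallmatrix}\right)$.
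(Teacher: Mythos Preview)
Your overall architecture matches the paper's: both arguments set up the projection $W(F)\to\Aut(F_0)\times\Sp(V_{\vec a})$, identify its image as $S_m\times\Sp(V_{\vec a})$, and identify its kernel with $\Hom(V_{\vec a},\U(1)^m/\Z_m)$. Your version is more explicit (constructing $g_\phi$, the permutation matrices, and the $\PU(N)$-normalizer by hand), whereas the paper simply records the map $p$ and asserts image and kernel; but the logical skeleton is the same.

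There is, however, a genuine gap in your treatment of the ``main technical hurdle''. You propose to reduce $W(V_{\vec a},\PU(N))=\Sp(V_{\vec a})$ to the case $s=1$ using the block $\Sp(H_{n_i})$'s together with permutations of isomorphic tensor factors. That subgroup is strictly smaller than $\Sp(V_{\vec a})$ in general: already for $\vec a=(2,2)$ one has $|\Sp_4(\bbF_2)|=720$ while $|\big(\SL_2(\bbF_2)^2\big)\rtimes S_2|=72$, so block symmetries and swaps cannot generate the full symplectic group. The missing ingredients are the transvections mixing distinct hyperbolic planes, and these are not ``block permutations'' in $\U(N)$. The clean fix is not to reduce at all but to invoke the finite Stone--von~Neumann theorem: the preimage of $V_{\vec a}$ in $\U(N)$ is a Heisenberg-type group whose given representation on $\bbC^N$ is the unique irreducible one with that central character; for any $\sigma\in\Sp(V_{\vec a})$, the $\sigma$-twisted representation is again irreducible with the same central character, hence isomorphic, and the intertwiner in $\U(N)$ descends to an element of $N_{\PU(N)}(V_{\vec a})$ realizing $\sigma$. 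The paper does not spell this out either, but it also does not claim the incorrect generation statement; if you want your proof to be self-contained, replace your last paragraph's reduction by this Schur/Stone--von~Neumann argument.
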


\begin{proof}
Let $F=F_{\vec{a},m}$. Since $F_0$ is stable under the action of $W(F)$ and the bimultiplicative function on
$F/F_0$ is also preserved, we have a homomorphism $p: W(F)\longrightarrow\Aut(F_0)\times\Sp(V_{\vec{a}})$.
Considering preservation of eigenvalues, one can show that $\Im p\subset S_{m}\times\Sp(V_{\vec{a}})$
and $\ker p=\Hom(V_{\vec{a}},\U(1)^{m}/\Z_{m})$. Using $F=F'\times T_{m}$ with $F'$ is a finite abelian
subgroup of $\PU(n/m)$ and $T_{m}$ is a maximal torus of $\PU(m)$, one sees that $\Im p\supset S_{m}\times
\Sp(V_{\vec{a}}).$ By this we reach the conclusion of the Proposition.
\end{proof}

%$W(F_{\vec{a},m})=\Sp(V_{\vec{a},m})$
%By the proof of Proposition \ref{P:A}, $W(F)$ can be identified
%with the group of automorphisms of $F$ preserving the symplectic form $m$, that
%is, $W(F)=\Sp(V_{\vec{a},m})$.

%By Proposition \ref{P:A}, abelian subgroups of $\PU(n)$ satisfying the condition $(\ast)$ are
%all maximal abelian subgroups. We will see later that abelian subgroups of other compact simple
%Lie groups satisfying the condition $(\ast)$ are not necessarily maximal abelian subgroups.
By Proposition \ref{P:A}, given two positive integers $n,m$ with $m\mid n$, the group $\SU(n)/\langle\omega_{m}I
\rangle$ possesses a closed abelian subgroup satisfying the condition $(*)$ if and only if $n\mid m^{k}$ for
some $k\geq 1$.

\section{Projective orthogonal groups}\label{S:BD}

Let $G=\O(n)/\langle-I\rangle$ and $\pi:\O(n)\longrightarrow\O(n)/\langle-I\rangle$ be the natural projection.
Given an abelian subgroup $F$ of $G$, for any $x,y\in F$, choose $A,B\in\O(n)$ representing $x,y$. Since
$1=[x,y]=\pi([A,B])$, $[A,B]=\lambda_{A,B}I$ for some $\lambda_{A,B}=\pm{1}$. It is clear that
$\lambda_{A,B}$ depends only on $x,y$. We define a map $m: F\times F\longrightarrow\{\pm{1}\}$ by
$m(x,y)=\lambda_{A,B}$. Then, $m$ is an antisymmetric bimultiplicative function on $F$. Let
$\ker m=\{x\in F:m(x,y)=1,\forall y\in F\}$. As $m$ is a continuous homomorphism taking values in $\pm{1}$,
one has $F_0\subset\ker m$.

\begin{lemma}\label{L:BD-kerm}
If $F$ is a closed abelian subgroup of $G$ satisfying the condition $(\ast)$, then for any $x\in\ker m$, there
exists $y\in F_0$ such that $xy\sim [I_{p,n-p}]$ for some integer $p$.
\end{lemma}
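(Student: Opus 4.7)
The plan is to choose a lift $A \in \O(n)$ of $x$, put it into real block-diagonal normal form, and exploit condition $(\ast)$ to show that the identity component of the center of $C_{\O(n)}(A)$ projects into $F_0$. This gives enough room inside $F_0$ to cancel the nontrivial rotation blocks of $A$, turning $xy$ into a reflection-type involution.

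In more detail: since $x \in \ker m$, any lift $A$ of $x$ commutes with every lift of every element of $F$, so the full preimage $H = \pi^{-1}(F)$ is contained in $C_{\O(n)}(A)$. By conjugating in $\O(n)$, I would put $A$ into the form
\[
A = \diag\{I_{n_+},\ -I_{n_-},\ R(\theta_1)\otimes I_{k_1},\ \dots,\ R(\theta_l)\otimes I_{k_l}\}
\]
with $\theta_j \in (0, \pi)$, where $R(\theta)$ is the standard $2\times 2$ rotation matrix of angle $\theta$. The centralizer then decomposes as
\[
C_{\O(n)}(A) \cong \O(n_+) \times \O(n_-) \times \prod_{j=1}^l \U(k_j),
\]
and $A$ lies in its center. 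In particular, the identity component of the center, $Z(C_{\O(n)}(A))_0$, is a torus $\U(1)^l$ whose $j$-th factor acts on the $j$-th real piece as the uniform rotation $R(\alpha_j) \otimes I_{k_j}$.

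Since $Z(C_{\O(n)}(A))_0$ centralizes $C_{\O(n)}(A) \supset H$, its image under $\pi$ is a connected subgroup of $C_G(F)$, hence contained in $C_G(F)_0$. Condition $(\ast)$ gives $\dim C_G(F) = \dim \frg_0^F = \dim F$, which combined with $F_0 \subset C_G(F)_0$ forces $F_0 = C_G(F)_0$; hence $\pi(Z(C_{\O(n)}(A))_0) \subset F_0$. Choosing $B \in Z(C_{\O(n)}(A))_0$ with $j$-th block $R(-\theta_j)\otimes I_{k_j}$ then yields
\[
AB = \diag\{I_{n_+},\ -I_{n_-},\ I_{2k_1},\ \dots,\ I_{2k_l}\},
\]
which is $\O(n)$-conjugate to $I_{n_-,\,n-n_-}$. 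Setting $y := \pi(B) \in F_0$ gives $xy = [AB] \sim [I_{p,\,n-p}]$ with $p = n_-$.

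The main obstacle is the step showing $\pi(Z(C_{\O(n)}(A))_0) \subset F_0$; this is the essential use of $(\ast)$, through the equality $F_0 = C_G(F)_0$. Once this is in place, the cancellation of each rotation block by a central unitary scalar is a routine computation, and the real normal form handles the remaining sign tracking.
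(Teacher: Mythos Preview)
Your proof is correct and follows essentially the same approach as the paper: put a lift $A$ of $x$ into real block-diagonal normal form, use $x\in\ker m$ to get $\pi^{-1}(F)\subset C_{\O(n)}(A)$, invoke condition $(\ast)$ to force the connected center of $C_{\O(n)}(A)$ to project into $F_0$, and then choose $y$ to cancel the rotation blocks. The only difference is cosmetic---you make the intermediate equality $F_0 = C_G(F)_0$ explicit, whereas the paper states the inclusion $Z(\O(n)^{A}/\langle-I\rangle)_0\subset F_0$ directly.
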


\begin{proof}
Without loss of generality, we may assume that $x=[A]$ for some \[A=\diag\{-I_{p},I_{q}, A_{n_1}(\theta_1),
\dots,A_{n_{s}}(\theta_{s})\},\] where $0<\theta_1<\cdots<\theta_{s}<\pi$, $p+q+2(n_1+\cdots+n_{s})=n$,
$$A_{k}(\theta)=\left(\begin{array}{cc}\cos(\theta) I_{k}&\sin(\theta) I_{k}\\-\sin(\theta) I_{k}&
\cos(\theta) I_{k}\\\end{array}\right).$$ Since $x\in\ker m$, we have \[F\subset\O(n)^{A}/\langle-I\rangle=
(\O(p)\times\O(q)\times\U(n_1)\times\cdots\U(n_{s}))/\langle-I\rangle.\] From $\dim\frg_0^{F}=\dim F$, we get
$\pi(1\times 1\times\Z_{n_1}\times\cdots\times\Z_{n_{s}})=Z(\O(n)^{A}/\langle-I\rangle)_0\subset F_0$. Let
$y=[\diag\{I_{p},I_{q}, A_{n_1}(-\theta_1),\dots,A_{n_{s}}(-\theta_{s})\}]$. Then $y\in F_0$ and
$xy=[I_{p,n-p}]$.
\end{proof}

Let \[B_{F}=\{x\in\ker m: A^{2}=1,\forall A\in\pi^{-1}(x)\}.\] %Then it is a subgroup of $\ker m$.
%By Lemma \ref{L:BD-kerm}, $\ker m=B_{F}F_0$.
For an even integer $n$, let \[H_2=\langle[I_{n/2,n/2}],[J'_{n/2}]\rangle\subset\O(n)/\langle-I\rangle.\]
Then, \[(\O(n)/\langle-I\rangle)^{H_2}=\Delta(\O(n/2)/\langle-I\rangle)\times H_2,\] where $\Delta(A)=
\diag\{A,A\}\in\O(2m)$ for any $A\in\O(m)$. For an integer $n$ with $4\mid n$, let \[H'_2=\langle[J_{n/2}],
[K_{n/4}]\rangle\subset\O(n)/\langle-I\rangle.\] Then, \[(\O(n)/\langle-I\rangle)^{H'_2}=\phi(\Sp(n/4)/\langle
-I\rangle)\times H'_2,\] where \[\phi(A+\mathbf{i}B+\mathbf{j}C+\mathbf{k}D)=\left(\begin{array}{cccc}A&C&B&D\\
-C&A&D&-B\\-B&-D&A&C\\-D&B&-C&A\\\end{array}\right)\] for $A+\textbf{i}B+\textbf{j}C+\textbf{k}D\in\Sp(n/4)$.

%where \[K_{n}=\left(\begin{array}{cccc} 0&0&0&I_{n}\\0&0&-I_{n}&0\\ 0&I_{n}&0&0\\-I_{n}&0&0&0\\\end{array}\right).\]

\begin{lemma}\label{L:BD-1}
If $F$ is a closed abelian subgroup of $G$ satisfying the condition $(\ast)$ and with $B_{F}=1$, then there
exists $k\in\bbZ_{\geq 0}$ such that $F=(H_2)^{k}\times\Delta^{k}(F')$, where $n'=\frac{n}{2^{k}}=1$ or $2$,
$F'=\SO(2)/\{\pm{I}\}$ if $n'=2$, and $F'=1$ if $n'=1$.
\end{lemma}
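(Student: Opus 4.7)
The plan is to induct on $n$, first showing that $\ker m=F_0$ and then peeling off an $H_2$ factor. The inclusion $F_0\subset\ker m$ is automatic since $m$ is a continuous $\{\pm 1\}$-valued function and $F_0$ is connected. For the reverse inclusion, given $x\in\ker m$, Lemma~\ref{L:BD-kerm} yields $y\in F_0$ with $xy$ conjugate in $G$ to $[I_{p,n-p}]$ for some $p$. The element $[I_{p,n-p}]$ admits the involution lift $I_{p,n-p}\in\O(n)$; pulling this property back along the conjugation, $xy$ also admits an involution lift, and combined with $xy\in\ker m$ this places $xy$ in $B_F=1$. Hence $xy=1$ and $x\in F_0$.

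For the base case $F=F_0$, condition $(\ast)$ forces $F_0$ to be a maximal torus of $G=\O(n)/\langle-I\rangle$. For $n\geq 3$ any such torus contains the nontrivial image $[\diag\{-I_2,I_{n-2}\}]$, which has an involution lift and lies in $\ker m$, contradicting $B_F=1$. Hence $n\leq 2$, giving $F=1$ (for $n=1$, the case $k=0,n'=1$) or $F=\SO(2)/\langle-I\rangle$ (for $n=2$, the case $k=0,n'=2$).

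For the inductive step, assume $F\neq F_0$. The induced form $m$ on the elementary abelian $2$-group $F/F_0$ is nondegenerate, so I choose $x_1,y_1\in F$ with $m(x_1,y_1)=-1$, and aim to modify them within their $F_0$-cosets so that both lift to involutions $A_1,B_1\in\O(n)$. For a lift $A$ of $x\in F$ with $x^2\in F_0$, the square $A^2$ lies in $\pi^{-1}(F_0)$ and is well-defined modulo squares of elements of $\pi^{-1}(F_0)$, which equals the identity component $T=(\pi^{-1}(F_0))_0$; since $T$ is a divisible torus, this allows one to arrange $A^2=I$ whenever $-I\in T$. Once $A_1,B_1$ are involutions with $A_1B_1=-B_1A_1$, the relation $B_1^{-1}A_1B_1=-A_1$ combined with $A_1^2=I$ forces $A_1\sim I_{n/2,n/2}$ (in particular $n$ is even), and a standard rigidity argument in $\O(n)$ shows $(A_1,B_1)\sim(I_{n/2,n/2},J'_{n/2})$; after conjugation we have $H_2=\langle[A_1],[B_1]\rangle\subset F$.

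The principal obstacle is to handle the residual case $F_0=1$, where $-I\notin T=1$: here one must rule out an Arf-type obstruction on the quadratic form $x\mapsto A_x^2\bmod T$ on $F$, using $(\ast)$ together with $B_F=1$; the example $Q_8/\langle-I\rangle\subset\O(4)/\langle-I\rangle$, which has all the desired features except $(\ast)$, shows that this step genuinely uses $(\ast)$. Once $H_2\subset F$ is obtained, the centralizer formula $(\O(n)/\langle-I\rangle)^{H_2}=\Delta(\O(n/2)/\langle-I\rangle)\times H_2$ displayed just before the lemma writes $F=\Delta(F'')\times H_2$ for some closed abelian $F''\subset\O(n/2)/\langle-I\rangle$; a short verification gives $(\ast)$ and $B_{F''}=1$ for $F''$. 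The inductive hypothesis then gives $F''=H_2^{k-1}\times\Delta^{k-1}(F')$ with $n/2=2^{k-1}n'$, and combining with the extracted $H_2$ yields $F=H_2^k\times\Delta^k(F')$ with $n=2^kn'$, as required.
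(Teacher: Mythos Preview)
Your inductive strategy is sound and pleasantly different from the paper's global argument, but the proof is incomplete at exactly the point you flag as ``the principal obstacle.'' You correctly reduce, via the divisibility of $T$, to finding an anticommuting pair of involutions in $\pi^{-1}(F)$; and once such a pair is found, your extraction of an $H_2$ factor and the induction on $n$ are fine. Two issues remain.

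\textbf{A minor omission.} You identify the residual case as $F_0=1$, but you do not justify that $F_0\neq 1$ forces $-I\in T$. This is true and easy: any involution $t\in T$ has $\pi(t)\in F_0\subset\ker m$ with involution lifts $\pm t$, hence $\pi(t)\in B_F=1$, so $t\in\{\pm I\}$. Thus $|T[2]|\leq 2$, giving $\rank T\leq 1$; and if $\rank T=1$ the unique nontrivial involution in $T$ must be $-I$. You should include this.

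\textbf{The real gap.} In the case $F_0=1$ you only assert that ``one must rule out an Arf-type obstruction \ldots\ using $(\ast)$ together with $B_F=1$,'' without doing so. This is precisely the heart of the lemma, and your $Q_8$ example shows it cannot be skipped. The paper handles it by a different route: rather than inducting, it chooses a finite complement $F'$ of $F_0$ in $F$ and invokes the classification of such elementary abelian $2$-subgroups (Proposition~2.12 of \cite{Yu}) to obtain either $F'\sim(H_2)^k$ or $F'\sim(H_2)^{k-1}\times H'_2$. In the second case the centralizer is $\Delta^{k-1}(\Sp(n'))\times\pi^{-1}(F')$ with $n'=n/2^{k+1}\geq 1$, so $(\ast)$ forces $F_0$ to be a maximal torus of $\Sp(n')/\langle-I\rangle$; in particular $\dim F_0=n'\geq 1$, whence $F_0\neq 1$, and an element of $F_0$ conjugate to $[\mathbf{i}]$ is then used to replace $F'$ by a complement of positive defect. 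In your framework this translates to: if $F_0=1$ then the Arf invariant of $q(x)=A_x^2$ on $F$ must be trivial, because Arf $=1$ would force $\pi^{-1}(F)$ to contain a copy of $Q_8$ whose $\O(n)$-centralizer is a nontrivial $\Sp(n')$, contradicting $\dim\frg_0^F=\dim F_0=0$. Until you supply this argument (or the citation to \cite{Yu}), the proof is not complete.
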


\begin{proof}
Since $B_{F}=1$, by Lemma \ref{L:BD-kerm} one has $\ker m=F_0$. Then, the induced function $m$ on $F/F_0$ is
nondegenerate. Choose a finite subgroup $F'$ of $F$ such that $F=F'\times F_0$. Then the function $m$ on
$F'$ is nondegenerate and we have \[F_0\subset(\O(n)^{\pi^{-1}(F')})/\langle-I\rangle.\] Since
$x^{2}\in\ker m=F_0$ for any $x\in F$, $F'$ is an elementary abelian 2-group. Let $2k=\rank F'=\rank F/F_0$.
By \cite{Yu} Proposition 2.12, for a given integer $k$, the conjugacy class of $F'$ has two possibilities.
Precisely, let $\defe F'-1$ be the difference between the number of elements of $F'$ conjugate to
$\pi(I_{\frac{n}{2},\frac{n}{2}})$ and the number of elements of $F'$ conjugate to $\pi(J_{\frac{n}{2}})$. Then,
$\defe F'\neq 0$ and the conjugacy class of $F'$ is determined by $\rank F'$ and $\sign(\defe F')$.

%any non-trivial element of $F'$ is conjugate to $\pi(I_{\frac{n}{2},\frac{n}{2}})$ or $\pi(J_{\frac{n}{2}})$.

If $\sign(\defe F')>0$, then $F'=(H_2)^{k}$ and \[\O(n)^{\pi^{-1}(F')}=\Delta^{k}(\O(n'))\times
\pi^{-1}(F'),\] where $n'=\frac{n}{2^{k}}$ and $\Delta^{k}=\Delta\circ\cdots\circ\Delta$ ($k$-times).
Since $F$ satisfies the condition $(\ast)$, $F_0$ is a maximal torus of $\O(n')/\langle-I\rangle$. Moreover,
we have $n'=1$ or $2$ since otherwise $F_0$ has an element $[A]$ ($A\in\O(n')$) with $A\neq\pm{I}$ and $A^2=I$,
which forces $B_{F}\neq 1$.

If $\sign(\defe F')<0$, then $F'=(H_2)^{k-1}\times H'_2$ and \[\O(n)^{\pi^{-1}(F')}=\Delta^{k-1}(\Sp(n'))\times
\pi^{-1}(F'),\] where $n'=\frac{n}{2^{k+1}}$, $\Delta^{k-1}=\Delta\circ\cdots\circ\Delta$ ($(k-1)$-times) and
$\Sp(n')\subset\O(4n')$ is an inclusion given by the map $\phi$ described ahead of this lemma. Since $F$
satisfies the condition $(\ast)$, $F_0$ is a maximal torus of $\Sp(n')/\langle-I\rangle$. We must have $n'=1$
since otherwise $B_{F}\neq 1$. On the other hand, when $n'=1$, using an element in $F_0$ conjugate to
$[\mathbf{i}]$, we can find another finite subgroup $F''$ of $F$ such that $F=F_0\times F''$ and with
$\defe F''>0$. Therefore we return to the case of $\sign(\defe F')>0$.
\end{proof}

\begin{prop}\label{P:BD-2}
If $F$ is a closed abelian subgroup of $G$ satisfying the condition $(\ast)$, then there exists $k\geq 0$ and
$s_0,s_1\geq 0$ such that $n=2^{k}s_0+2^{k+1}s_1$, $\dim F_0=s_1$, $\rank(F/\ker m)=2k$ and
$\rank(\ker m/F_0)\leq\max\{s_0-1,0\}$.
\end{prop}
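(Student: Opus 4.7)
The plan is to block-decompose $\bbR^n$ using $B_F$, apply Lemma~\ref{L:BD-1} to each block, and assemble the data. Since $B_F$ consists of commuting involutions with involution lifts, the preimage $\tilde B:=\pi^{-1}(B_F)$ is a finite abelian group of commuting involutions in $\O(n)$ and is simultaneously diagonalizable. This gives $\bbR^n=V_1\oplus\cdots\oplus V_r$ into joint $\pm 1$-eigenspaces with distinct characters $\chi_i:\tilde B\to\{\pm 1\}$ (each with $\chi_i(-I)=-1$) and $N_i=\dim V_i$. Since $F$ centralizes $B_F$, $F$ sits inside $(\prod_i\O(N_i))/\langle-I\rangle$, and projection gives $\bar p_i:F\twoheadrightarrow F_i\subseteq\O(N_i)/\langle-I\rangle$. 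Several standard facts follow: the blockwise decomposition of $[A,B]=\lambda I$ gives $m_F(x,y)=m_{F_i}(\bar p_i(x),\bar p_i(y))$ and hence $F/\ker m_F\cong F_i/\ker m_{F_i}$; the kernel of $F\to\prod F_i$ is $B_F$; distinctness of the $\chi_i$ gives $\mathfrak{so}(n)^F=\bigoplus\mathfrak{so}(N_i)^{F_i}$, so $(\ast)$ for $F$ forces each $F_i$ to satisfy $(\ast)$ with $\dim F=\sum\dim F_i$; and Lemma~\ref{L:BD-kerm} yields $\ker m_F=B_F\cdot F_0$, hence $\ker m_{F_i}=\bar p_i(F_0)=(F_i)_0$. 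A dimension-and-connectedness count then identifies $\tilde F_0:=\pi^{-1}(F_0)_0$ with $\prod_j\tilde{(F_j)_0}$ inside $\prod_j\O(N_j)$.

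The crucial step is showing $B_{F_i}=1$, so Lemma~\ref{L:BD-1} applies to each $F_i$. Suppose $z\in B_{F_i}$ has an involution lift $A_i\in\O(N_i)$ with $A_i\neq\pm I$. Writing $z=\bar p_i(y)$ for some $y\in F_0$ and lifting to $\tilde y=(\tilde y_1,\ldots,\tilde y_r)\in\tilde F_0=\prod\tilde{(F_j)_0}$ gives $\tilde y_i=\pm A_i$, hence $\tilde y_i^2=I$. Form $Y:=(I,\ldots,\tilde y_i,\ldots,I)\in\tilde F_0$. Then $Y^2=I$, so $\pi(Y)\in F_0\subseteq\ker m_F$ has involution lift $Y$, giving $\pi(Y)\in B_F$ and $Y\in\tilde B$. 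But $\tilde B$ acts on $V_i$ by the scalar character $\chi_i$, so $\tilde y_i=Y|_{V_i}=\pm I$, contradicting $A_i\neq\pm I$. Hence $B_{F_i}=1$, and Lemma~\ref{L:BD-1} yields $N_i\in\{2^k,2^{k+1}\}$ with $(F_i)_0$ trivial or a $1$-dimensional torus, respectively, where $2k=\rank(F/\ker m_F)$ is common to all blocks. Letting $s_0,s_1$ count the numbers of blocks of each type gives $n=2^ks_0+2^{k+1}s_1$ and $\dim F_0=\sum\dim(F_i)_0=s_1$.

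For the bound $\rank(\ker m_F/F_0)\leq\max\{s_0-1,0\}$, note $\ker m_F/F_0\cong B_F/(B_F\cap F_0)$. The character embedding $\tilde B\hookrightarrow(\bbZ/2)^r$ (with $-I\mapsto(-1,\ldots,-1)$) gives $\rank B_F\leq r-1=s_0+s_1-1$. For the intersection, $\tilde F_0[2]\subseteq\tilde B$, since any involution $t\in\tilde F_0$ has $\pi(t)\in F_0\subseteq\ker m_F$ with involution lift $t$. The key dichotomy is $-I\in\tilde F_0\iff s_0=0$: whenever a finite block is present, $\tilde F_0=\prod\tilde{(F_j)_0}$ has identity in that block, but $-I$ has $-I_{N_j}\neq I$ there. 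In the case $s_0=0$ one computes $\rank(B_F\cap F_0)=s_1-1$, which combined with $\rank B_F\leq s_1-1$ forces $\rank(\ker m_F/F_0)=0$; in the case $s_0\geq 1$ one gets $\rank(B_F\cap F_0)=s_1$, yielding $\rank(\ker m_F/F_0)\leq s_0-1$. Either way $\rank(\ker m_F/F_0)\leq\max\{s_0-1,0\}$. The main obstacle is the $B_{F_i}=1$ step, which combines the product structure $\tilde F_0=\prod\tilde{(F_j)_0}$ with the scalar action of $\tilde B$ on each joint eigenspace $V_i$ to derive the needed contradiction.
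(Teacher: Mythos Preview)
Your proof is correct and follows the same overall strategy as the paper: diagonalize $\pi^{-1}(B_F)$, project $F$ to blockwise quotients $F_i$, prove $B_{F_i}=1$ so Lemma~\ref{L:BD-1} applies, and then count. The two proofs differ only in how the key step $B_{F_i}=1$ is executed. The paper takes an element $x\in F$ with $p_i(x)\in B_{F_i}$ nontrivial, observes $x\in\ker m$, and invokes the explicit construction inside the proof of Lemma~\ref{L:BD-kerm} to produce $y\in F_0$ whose $i$-th block is the identity while $xy\in B_F$; this yields an element of $B_F$ acting nonscalarly on $V_i$, a contradiction. You instead first establish the product decomposition $\tilde F_0=\prod_j\tilde{(F_j)_0}$ via the dimension squeeze $\dim F=\sum_j\dim\mathfrak{so}(N_j)^{F_j}\geq\sum_j\dim(F_j)_0\geq\dim F_0$, and then use this product structure to isolate the offending block as $Y=(I,\dots,\tilde y_i,\dots,I)\in\tilde F_0$. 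Both arguments land on the same contradiction. A small bonus of your route is that the dimension squeeze explicitly verifies condition $(\ast)$ for each $F_i$, which the paper uses (Lemma~\ref{L:BD-1} requires it) but does not spell out.
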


\begin{proof}
Let $2k=\rank(F/\ker m)$. Since $B_{F}\subset\ker m$, one has $F\subset\O(n)^{\pi^{-1}(B_{F})}/\langle-I
\rangle$. By the definition of $B_{F}$, $\pi^{-1}(B_{F})$ is an abelian subgroup of $\O(n)$ with every
element conjugate to $I_{p,n-p}$ for some $p$, $0\leq p\leq n$. Hence $\pi^{-1}(B_{F})$ is a diagonalizable
subgroup of $\O(n)$. Without loss of generality, we may assume that \[\O(n)^{\pi^{-1}(B_{F})}=\O(n_1)\times
\cdots\times\O(n_{s})\] for some positive integers $n_1,\dots,n_{s}$ with $n_1+\cdots+n_{s}=n$. Let
$F_{i}\subset\O(n_{i})/\langle-I_{n_{i}}\rangle$ be the image of the projection of $F$ to $\O(n_{i})/\langle
-I_{n_{i}}\rangle$ and $p_{i}: F\longrightarrow F_{i}$ be the projection. Each $F_{i}$ as a subgroup of
$\O(n_{i})/\langle-I_{n_{i}}\rangle$ has a bimultiplicative function $m_{i}$ similar as the function $m$ on
$F$. An element $x\in F$ is of the form $x=[(A_1,\dots,A_{s})]$, $A_{i}\in\O(n_{i})$ for any $1\leq i\leq s$.
For any other $y=[(B_1,\dots,B_{s})]\in F$, by the definition of the  function $m$, one has $[A_{i},B_{i}]=
m(x,y)I_{n_{i}}$ for any $i$, $1\leq i\leq s$. Hence $m_{i}(p_{i}(x),p_{i}(y))=m(x,y)$ for any $x,y\in F$. We
prove that $B_{F_{i}}=1$ for any $i$. Suppose this fails. Then $F$ has an element $x=[(A_1,\dots,A_{s})]$ with
$o(A_{i})=2$, $A_{i}\neq\pm{I}$ and $x_{i}=\pi(A_{i})\in\ker m_{i}$. Since $x_{i}=\pi(A_{i})\in\ker m_{i}$, by
the equality $m_{j}(p_{j}(x),p_{j}(y))=m(x,y)$ for any $y\in F$ and any $1\leq j\leq s$, we get $x\in\ker m$.
By the proof of Lemma \ref{L:BD-kerm}, there exists $y=[(B_1,\dots,B_{s})]\in F_0$ such that $B_{i}=I$ and
$(A_{j}B_{j})^2=I$ for any $1\leq j\leq s$. Hence $xy\in B_{F}$. On the other hand, since $B_{i}=I$ and
$A_{i}\neq\pm{I}$, \[\O(n)^{\pi^{-1}(xy)}\not\supset\O(n_1)\times\cdots\times\O(n_{s}).\] This contradicts
that $\O(n)^{\pi^{-1}(B_{F})}=\O(n_1)\times\cdots\times\O(n_{s})$.

For any $1\leq i\leq s$, since $B_{F_{i}}=1$, by Lemma \ref{L:BD-1}, one has $n_{i}=2^{k}n'_{i}$ with
$n'_{i}=1$ or $2$, and the conjugacy class of $F_{i}$ is uniquely determined by the number $k$. Let $s_0$
be the number of indices $i$ with $n'_{i}=1$ and $s_1$ be the number of indices $i$ with $n'_{i}=2$. Then,
$n=2^{k}s_0+2^{k+1}s_1$. Without loss of generality we may assume that $n_{i}=2^{k}$ if $1\leq i\leq s_0$, and
$n_{i}=2^{k+1}$ if $s_0+1\leq i\leq s$. Therefore \[F_0\subset\O(n)^{\pi^{-1}(B_{F})}/\langle-I\rangle=
(\O(n_1)\times\cdots\times\O(n_{s}))/\langle(-I,\dots,-I)\rangle\] is conjugate to $1^{s_0}\times\SO(2)^{s_1}$
if $s_0>0$, or to $\SO(2)^{s_1}/\langle(-I,\dots,-I)\rangle$ if $s_0=0$. Hence $\dim F=s_1$. Without loss of
generality we may assume that $F_0=1^{s_0}\times\SO(2)^{s_1}$ if $s_0>0$, and $F_0=\SO(2)^{s_1}/\langle
(-I,\dots,-I)\rangle$ if $s_0=0$. Hence $B_{F}\cap F_0=1^{s_0}\times\{\pm{1}\}^{s_1}$ if $s_0>0$, and
$\{B_{F}\cap F_0=\pm{1}\}^{s_1}/\langle(-I,\dots,-I)\rangle$ if $s_0=0$. Therefore $\rank(B_{F}\cap F_0)=
s_1-\delta_{s_0,0}$. Moreover \begin{eqnarray*}&&\rank(\ker m/F_0)\\&=&\rank(B_{F}F_0/F_0)\\&=&
\rank(B_{F}/B_{F}\cap F_0)\\&=&\rank B_{F}-\rank(B_{F}\cap F_0)\\&=&\rank B_{F}-(s_1-\delta_{s_0,0})\\&
\leq& s_0+s_1-1-(s_1-\delta_{s_0,0})\\&=&s_0-1+\delta_{s_0,0}\\&=&\max\{s_0-1,0\}.\end{eqnarray*}
\end{proof}

As in the above proof, let $F_{i}$ be the image of $F$ under the projection $p_{i}:\O(n)^{\pi^{-1}(B_{F})}
\longrightarrow\O(n_{i})/\langle-I\rangle$. For $1\leq i\leq s_0$, $F_{i}$ is an elementary abelian
$2$-subgroup, which means $A_{i}^{2}=\pm{I}$ if $A_{i}$ is the $i$-th component of $A$ for an element
$x=[A]\in F$. Define $\mu_{i}: F \longrightarrow\{\pm{1}\}$ by $A_{i}^{2}=\mu_{i}(x)I$. Since
$p_{i}(\ker m)=1$ if $1\leq i\leq s_0$, $\mu_{i}$ descends to a map $F/\ker m\longrightarrow\{\pm{1}\}$.
Moreover $p_{i}: F/\ker m\longrightarrow F_{i}$ is an isomorphism transferring $(m,\mu_{i})$ to
$(m_{i},\mu)$. In this way, we get a linear structure $(m,\mu_1,\cdots,\mu_{s_0})$ on $F/\ker m$. Note
that, each $\mu_{i}$ is compatible with $m$, i.e., we have $m(x,y)=\mu_{i}(x)\mu_{i}(y)\mu_{i}(xy)$ for
any $x,y\in F$; and as the proof of Lemma \ref{L:BD-1} shows, $(F/\ker m,m,\mu_{i})\cong V_{0,k;0,0}$ for
each $1\leq i\leq s_0$ (cf. \cite{Yu} Subsection 2.4).

\begin{definition}
A finite-dimensional vector space $V$ over the field $\mathbb{F}_2=\mathbb{Z}/2\mathbb{Z}$ is called
a multi-symplectic metric space if it is associated with a map $m: V\times V\longrightarrow\mathbb{F}_2$
and maps $$\mu_1,\mu_2,\dots,\mu_{s}: V\longrightarrow \mathbb{F}_2$$ such that: $m(x,x)=0$, $m(x,y)=m(y,x)$
and $m(x+y,z)=m(x,z)m(y,z)$ for any $x,y,z\in F$; and $m(x,y)=\mu_{i}(x)+\mu_{i}(y)+\mu_{i}(xy)$ for any
$x,y\in V$ and $1\leq i\leq s$. Two multi-symplectic metric spaces $(V,m,\mu_1,\dots,\mu_{s})$ and
$(V',m',\mu'_1,\dots,\mu'_{s})$ are called isomorphic if there exists a linear space isomorphism
$f: V\longrightarrow V'$ transferring $(V,m,\mu_1,\dots,\mu_{s})$ to $(V',m',\mu'_{i_1},\dots,\mu'_{i_s})$
for a permutation $i_1,i_2,\dots,i_{s}$ of $1,2,\dots,s$. Denote by $\Aut(V,m,\mu_1,\dots,\mu_{k})$ the
group of linear isomorphisms $f: V\longrightarrow V$ which is also an isomorphism as multi-symplectic
metric space.
\end{definition}

Note that, in the above definition the order among $\mu_1,\mu_2,\dots,\mu_{s}$ is overlooked.

%Here we overlook the order among $(\mu_1,\cdots,\mu_{s_0})$.

%Here, $(\mu_1,\cdots,\mu_{s_0})$ are defined up to permutations, that means we can change the
%order of them.

\begin{prop}\label{P:BD-4}
The conjugacy class of a closed abelian subgroup $F$ of $G$ satisfying the condition $(\ast)$ is determined
by the integer $k=\frac{1}{2}\rank(F/\ker m)$, the conjugacy class of the subgroup $B_{F}$, and the linear
structure $(m,\mu_1,\cdots,\mu_{s_0})$ on $F/\ker m$.
\end{prop}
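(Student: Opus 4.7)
The approach is to normalize $F$ to a standard form determined by the three listed invariants, and then to identify the residual conjugation freedom in $G$ with the equivalence built into the definition of isomorphism of multi-symplectic metric spaces. First, using the conjugacy class of $B_F$, I would conjugate in $G$ to assume that $\pi^{-1}(B_F)$ is a fixed diagonalizable subgroup of $\O(n)$, so that $\O(n)^{\pi^{-1}(B_F)} = \O(n_1) \times \cdots \times \O(n_s)$ has a fixed block decomposition with $n_i = 2^{k}$ for $1 \leq i \leq s_0$ and $n_i = 2^{k+1}$ otherwise, reproducing the normalization in the proof of Proposition \ref{P:BD-2}. Each projection $F_i = p_i(F) \subset \O(n_i)/\langle -I_{n_i}\rangle$ then satisfies $(\ast)$, has $B_{F_i} = 1$, and has rank-$2k$ quotient modulo its identity component, so Lemma \ref{L:BD-1} fixes $F_i$ up to conjugation in $\O(n_i)$: after further conjugation I may take $F_i = H_2^{k}$ for $i \leq s_0$ and $F_i = H_2^{k} \times \Delta^{k}(\SO(2)/\{\pm I\})$ for $i > s_0$.

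After these reductions, $F$ is determined by the projection isomorphisms $\bar p_i \colon V = F/\ker m \xrightarrow{\sim} F_i/(F_i)_0 \cong \mathbb{F}_2^{2k}$, where each target carries its standard symplectic-quadratic datum $(m_i, \mu)$. By construction $\bar p_i$ carries $m$ to $m_i$, and for $i \leq s_0$ carries $\mu_i$ to the intrinsic quadratic form $\mu$ on $H_2^{k}$. The identity component $F_0$ is then forced by $(\ast)$ to be $1^{s_0} \times \SO(2)^{s_1}$ (or its $\langle -I\rangle$-quotient when $s_0 = 0$), exactly as in Proposition \ref{P:BD-2}. Since $F_0 \subset \ker m$ and the $\bar p_i$ are isomorphisms on $F/\ker m$, the tuple $(\bar p_i)_i$ together with $F_0$ reconstructs $F$ uniquely.

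The last step, which carries the main content, is to show that two tuples $(\bar p_i)$, $(\bar p_i')$ yield conjugate subgroups $F$, $F'$ if and only if they induce isomorphic multi-symplectic data on $V$ in the sense of the preceding definition. The residual normalizer in $G$ of the chosen data is generated by (i) conjugations inside each $\O(n_i)/\langle -I_{n_i}\rangle$ normalizing $F_i$, whose induced action on $F_i/(F_i)_0$ realizes the full automorphism group $\Aut(F_i/(F_i)_0, m_i, \mu)$ by the Weyl-group computation for the model $H_2^{k}$ established in \cite{Yu}; and (ii) permutations of blocks of equal size, realized by permutation matrices in $\O(n)$, which permute the indices in $\{1, \ldots, s_0\}$ (hence permute the labels of the $\mu_i$'s) and in $\{s_0+1, \ldots, s\}$. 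Composing with a suitable element of (i) adjusts each $\bar p_i$ by any automorphism of the local model, while (ii) accounts for the permutation of the $\mu_i$ built into the equivalence relation; together they match any two tuples whose multi-symplectic pullbacks agree up to the definition's equivalence.

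The main obstacle is this third step: realizing every automorphism of each local model $(F_i/(F_i)_0, m_i, \mu)$ by a global conjugation in $\O(n_i)/\langle -I_{n_i}\rangle$, and verifying that these local symmetries together with block permutations exhaust all equivalences of tuples producing the same multi-symplectic invariant on $V$. The first half is a direct appeal to the Weyl-group description of $H_2^{k}$ inside $\O(2^{k})/\langle -I_{2^{k}}\rangle$ from \cite{Yu}; the second half is a Witt-extension-type argument that factors any isomorphism of multi-symplectic metric spaces into a permutation of the $\mu_i$'s followed by simultaneous local automorphisms, which are then independently realized at each block.
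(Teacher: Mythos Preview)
Your outline is correct and reaches the conclusion, but it differs in execution from the paper's proof, and one point in your third step needs care.

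The paper's argument is direct and constructive rather than normalizer-theoretic. After normalizing $B_F$ and the block decomposition as you do, it simply builds a symplectic basis $x_1,\ldots,x_{2k}$ of $F/\ker m$ and, for each $x_j$, normalizes the $i$-th block $A_i$ of a lift: for $i\leq s_0$ the conjugacy class of $A_i$ is read off from $\mu_i(x_j)$; for $i>s_0$ the one-dimensional torus $(F_i)_0$ is used to force $A_i\sim I_{2^k,2^k}$ or $J_{2^k}$, so these blocks contribute no further invariant. Since $F=\langle B_F,F_0,x_1,\ldots,x_{2k}\rangle$, this already pins down $F$. No appeal to the Weyl-group computations in \cite{Yu} is needed.

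In your version two things should be tightened. First, the ``Witt-extension-type argument'' is not what is actually happening: the linear isomorphism $f\colon V\to V$ appearing in the definition of isomorphism of multi-symplectic metric spaces is absorbed for free, because the tuples $(\bar p_i)$ and $(\bar p_i\circ f)$ define the \emph{same} subgroup $F$ (you have only reparametrized $F/\ker m$). What remains is then exactly your (i)$+$(ii). Second, for $i>s_0$ there is no canonical quadratic form on $F_i/(F_i)_0$, and the result you invoke from \cite{Yu}---that the Weyl group of $H_2^{k}$ in $\O(2^{k})/\langle -I\rangle$ realizes $\Aut(m,\mu)$---does not literally apply to $H_2^{k}\times\Delta^{k}(\SO(2)/\{\pm I\})$ inside $\O(2^{k+1})/\langle -I\rangle$. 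For those blocks you need the induced action on $F_i/(F_i)_0$ to be the full $\Sp(2k,\mathbb F_2)$; this is true, but the way one sees it is precisely by using the torus $(F_i)_0$ to flip the Arf type of a chosen finite complement---which is the paper's normalization step in disguise.
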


\begin{proof}
Without loss of generality we may assume that \[\O(n)^{\pi^{-1}(B_{F})}=\O(n_1)\times\cdots\times\O(n_{s})\]
for some positive integers $n_1,\dots,n_{s}$ with $n_1+\cdots+n_{s}=n$; moreover, $n_{i}=2^{k}$ if $1\leq i
\leq s_0$, and $n_{i}=2^{k+1}$ if $s_0+1\leq i\leq s$. By Lemma \ref{L:BD-1}, $(F_i)_0=1$ if $1\leq i\leq
s_0$, and $(F_i)_0\cong\SO(2)/\langle-I\rangle$ if $s_0+1\leq i\leq s$. We may assume that the subgroup
$B_{F}$ and the maps $\mu_{i}: F\longrightarrow\{\pm{1}\}$, $1\leq i\leq s_0$ are given. From these we
determine the subgroup $F$ up to conjugacy. The conjugacy class of each $F_{i}$ is uniquely determined.
The issue is how to match them. For an $x\in F$, let $x=[(A_1,\dots,A_{s_0},A_{s_0+1},\dots,A_{s})]$, where
$A_{i}\in\O(n_i)$ for any $1\leq i\leq s$. For $1\leq i\leq s_0$, the conjugacy of $A_{i}$ is determined by
$\mu_{i}(x)$. For $s_0+1\leq i\leq s$, as $F_{i}/(F_i)_0$ is an elementary abelian 2-group with a
nondegenerate bimultiplicative function $m_{i}$ and $(F_i)_0$ is a one-dimensional torus, we can modify
$A_{i}$ to make it conjugate to $I_{2^k,2^k}$ or $J_{2^{k}}$. Inductively, we construct elements
$x_1,\cdots,x_{2k}$ of $F$ generating $F/\ker m$ with $m(x_{j_1},x_{j_2})=1$ if and only if $\{j_1,j_2\}=
\{2j-1,2j\}$ for some $1\leq j\leq k$, such that the conjugacy class of the tuple $(x_1,\cdots,x_{2k})$ is
determined by $\mu_1,\mu_2,\dots,\mu_{s_0}$. Since $F=\langle B_{F}, F_0, x_1,\dots,x_{2k}\rangle$, the
conjugacy class of $F$ is determined accordingly.
\end{proof}

%From the proof of the Propositions \ref{P:BD-2}, we get $F_1\cong (H_2)^{k}$. That means as
%a symplectic metric space we have (cf. \cite{Yu} Subsection 2.4)
%\[(F/\ker m, m, \mu_{i})\cong V_{0,s;,0,0}.\]

\begin{prop}\label{P:BD-3}
If $F$ is a closed abelian subgroup of $G$ satisfying the condition $(\ast)$, then $F$ is an elementary
abelian 2-subgroup if and only if $s_0=s$ and $\mu_{i}=\mu_{j}$ for any $1\leq i,j\leq s_0$. If $F$ is a
maximal abelian subgroup, then $\rank(\ker m/F_0)=\max\{s_0-1,0\}$; if $\rank(\ker m/F_0)=\max\{s_0-1,0\}$
and $(s_0,s_1)\neq(2,0)$, then $F$ is a maximal abelian subgroup.
%If $s_0\leq 1$, then $F$ is maximal and its conjugacy class is uniquely determined by \[k=\frac{1}{2}\rank(F/\ker m).\]
\end{prop}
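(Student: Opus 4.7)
The plan is to prove the three assertions in order, using the blockwise decomposition $\O(n)^{\pi^{-1}(B_F)}=\O(n_1)\times\cdots\times\O(n_s)$ from Proposition \ref{P:BD-2} together with the identification (via Lemma \ref{L:BD-1}) of each $F_i=p_i(F)$ as a maximal abelian subgroup of $\O(n_i)/\langle-I\rangle$. For the first claim, $F$ being an elementary abelian 2-group forces $F_0=1$, hence $s_1=\dim F_0=0$ and $s=s_0$; and every lift $A$ of $x\in F$ satisfies $A^2\in\langle-I\rangle$, which reads blockwise as $A_i^2=\epsilon I$ with a single sign $\epsilon$ independent of $i$, i.e., $\mu_1(x)=\cdots=\mu_{s_0}(x)$. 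The converse is a direct reassembly: if $s_1=0$ and all $\mu_i$ coincide, then $A^2=\mu_1(x)I\in\langle-I\rangle$ for any lift and $F$ is an elementary abelian 2-group.

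For the second claim, the bound $\rank(\ker m/F_0)\le\max\{s_0-1,0\}$ from the proof of Proposition \ref{P:BD-2} comes from $\rank B_F\le s-1$, with equality precisely when $\pi^{-1}(B_F)=\{\pm I\}^s$. If this fails, I would pick $b\in\{\pm I\}^s\setminus\pi^{-1}(B_F)$; then $b$ is central in $\O(n)^{\pi^{-1}(B_F)}\supseteq\pi^{-1}(F)$, so $\pi(b)$ commutes with $F$, and $\pi(b)\in F$ would place it in $\ker m$ and, since $b^2=I$, in $B_F$, forcing $b\in\pi^{-1}(B_F)$. Hence $\langle F,\pi(b)\rangle\supsetneq F$ is abelian, so $F$ cannot be maximal.

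For the third claim, assuming $\pi^{-1}(B_F)=\{\pm I\}^s$ and $(s_0,s_1)\neq(2,0)$, I would take $g=[A]\in C_G(F)$ and aim to show $g\in F$. The commutation with $\pi^{-1}(B_F)$ produces a character $\chi_g\in\widehat{B_F}$ with $AbA^{-1}=\chi_g(b)b$. If $\chi_g=1$, then $A$ is block-diagonal, each $[A_i]$ centralises $F_i$, and maximality of $F_i$ in $\O(n_i)/\langle-I\rangle$ gives $[A_i]\in F_i$. Consistency of the global commutator sign forces $y\mapsto m_i([A_i],p_i(y))$ to be a character $\phi$ of $F/\ker m$ common to all $i$; nondegeneracy of $m$ produces $y_0\in F$ with $\phi=m(y_0,\cdot)$, and after multiplying $g$ by $y_0^{-1}$ we may assume $[A_i]\in\ker m_i$ for each $i$. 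For $i\le s_0$ this forces $A_i\in\langle-I\rangle$ and for $i>s_0$ gives $[A_i]\in(F_i)_0$, so $[A]\in B_F\cdot F_0=\ker m\subseteq F$ and $g\in F$. If instead $\chi_g\ne 1$, then $A$ translates the coordinate characters $\chi_1,\dots,\chi_s$ of $\pi^{-1}(B_F)$ by $\chi_g$; writing $\chi_g=\chi_\delta$ and imposing $\chi_g(-I)=1$ together with $\{e_i+\delta\}_{i=1}^{s}\subseteq\{e_1,\dots,e_s\}$, a short direct check forces $s=2$ and $\delta=e_1+e_2$, so $A$ swaps two equal-sized blocks and $(s_0,s_1)\in\{(2,0),(0,2)\}$. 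The case $(0,2)$ is excluded because the swap acts on $F_0\cong\SO(2)^2/\langle-I\rangle$ by $(\theta_1,\theta_2)\mapsto(\theta_2,\theta_1)$, which is not the identity, contradicting commutation with $F_0$; and $(2,0)$ is excluded by hypothesis.

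The main obstacle will be the character-and-assembly step in the $\chi_g=1$ case of Part 3: producing the element $y_0\in F$ from the common commutator character and then recognising the residue inside $\ker m=B_F\cdot F_0$. This step relies essentially on the nondegeneracy of $m$ on $F/\ker m$ and on the concrete description of $\ker m$ inherited from the proof of Proposition \ref{P:BD-2}; the $\chi_g\ne 1$ analysis is then a finite bookkeeping exercise, whose only delicate point is keeping $(2,0)$ and $(0,2)$ separate from $s\ge 3$.
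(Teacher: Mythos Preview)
Your argument is correct, and for Parts 1 and 2 it coincides with the paper's (the paper simply declares Part 1 ``clear'' and for Part 2 observes that the center $\{\pm I\}^{s_0}\times(\Z_{2^k})^{s_1}/\langle(-I,\dots,-I)\rangle$ of $(\O(n)^{B_F}/\langle-I\rangle)^{F_0}$ centralises $F$, which is exactly your pick-an-extra-central-sign argument).

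For Part 3 the two proofs diverge in structure. The paper works top-down through a chain of centralizers: it first computes $C_G(F_0)=(\O(2^{k}s_0)\times\U(2^{k})^{s_1})/\langle-I\rangle$, then observes that when $(s_0,s_1)\neq(2,0),(0,1)$ one has $n\neq 2^{k+1}$, so $I_{2^k,n-2^k}\not\sim -I_{2^k,n-2^k}$ and hence $C_G(\ker m)=(C_G(F_0))^{B_F}$ is already block-diagonal; from there $C_G(F)$ collapses to $F$ via maximality of each $F_i$ and nondegeneracy of $m$, and $(0,1)$ is dispatched separately. You instead analyze an arbitrary $g\in C_G(F)$ directly: the character $\chi_g$ on $\pi^{-1}(B_F)=\{\pm I\}^s$ decides whether the lift of $g$ is block-diagonal, and in the nontrivial case the equation $e_{\sigma^{-1}(j)}=e_j+\delta$ on $\mathbb{F}_2^s$ forces $s=2$ by a two-line weight count. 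Your route is more elementary and makes the combinatorial reason for the exceptional pair $(2,0)$ transparent; the paper's route is shorter and more structural, trading the $\mathbb{F}_2$-combinatorics for a single eigenvalue-multiplicity observation. One small point to make explicit in your $\chi_g=1$ case: the passage from ``$[A_i Y_i^{-1}]\in(F_i)_0$ for each $i>s_0$'' to ``$[A]y_0^{-1}\in B_F\cdot F_0$'' uses that $F_0\to\prod_{i>s_0}(F_i)_0$ is surjective, which is immediate from the explicit form $F_0\sim 1^{s_0}\times\SO(2)^{s_1}$ in the proof of Proposition~\ref{P:BD-2} but worth stating.
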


\begin{proof}
The first statement is clear. For the second statement, one has \[F\subset(\O(n)^{B_{F}}/\langle-I
\rangle)^{F_0}\cong(\O(2^{k})^{s_0}\times\U(2^{k})^{s_1})/\langle(-I,\dots,-I)\rangle.\] Hence
$\{\pm{I}\}^{s_0}\times(\Z_{2^{k}})^{s_1}/\langle(-I,\dots,-I)\rangle\subset C_{G}(F)$. If $F$ is a maximal
abelian subgroup, then $\{\pm{I}\}^{s_0}\times(\Z_{2^{k}})^{s_1}/\langle(-I,\dots,-I)\rangle\subset F$.
Furthermore one has \[\{\pm{I}\}^{s_0}\times(\Z_{2^{k}})^{s_1}/\langle(-I,\dots,-I)\rangle\subset\ker m.\]
Thus $\rank(\ker m/F_0)=\max\{s_0-1,0\}.$ Without loss of generality we may assume that
$\pi^{-1}(F_0)=1^{2^{k}s_0}\times\Delta^{k}(\SO(2))^{s_1}$. Then, \[C_{G}(F_0)=(\O(2^{k}s_0)\times
\U(2^{k})^{s_1})/\langle(-I,\dots,-I)\rangle.\] If $\rank(\ker m/F_0)=\max\{s_0-1,0\}$, we may and do assume
that $B_{F}=\{\pm{I_{2^{k}}}\}^{s_0}\times(\pm{I_{2^{k+1}}})^{s_1}$. While $(s_0,s_1)\neq(2,0)$ or $(0,1)$,
one has $n\neq 2^{k+1}$ and hence $I_{2^{k},n-2^{k}}\not\sim-I_{2^{k},n-2^{k}}$. Thefore \begin{eqnarray*}&&
C_{G}(\ker m)\\&=&(C_{G}(F_0))^{B_{F}}\\&=&((\O(2^{k}s_0)\times\U(2^{k})^{s_1})/\langle(-I,\dots,-I)
\rangle)^{B_{F}}\\&=&(\O(2^{k})^{s_0}\times\U(2^{k})^{s_1})\langle(-I,\dots,-I)\rangle.\end{eqnarray*}
Since each $F_i$ is a maximal abelian subgroup and the function $m$ on $F/\ker m$ is nondegenerate, one has
\begin{eqnarray*}&&C_{G}(F)\\&=&\big((\O(2^{k})^{s_0}\times\U(2^{k})^{s_1})/\langle(-I,\dots,-I)\rangle
\big)^{F}\\&=&\langle F,Z\big((\O(2^{k})^{s_0}\times\U(2^{k})^{s_1})/\langle(-I,\dots,-I)\rangle\big)\rangle
\\&=&\langle F,\big(\{\pm{I_{2^{k}}}\}^{s_0}\times(\Delta^{k}(\U(1)))^{s_1}\big)/\langle(-I,\dots,-I)\rangle
\rangle\\&=&\langle F,\ker m\rangle\\&=& F.\end{eqnarray*} Thus $F$ is a maximal abelian subgroup. If
$(s_0,s_1)=(0,1)$, $F$ is clearly a maximal abelian subgroup.
\end{proof}

%For the third statement, when $s_0\leq 1$, we have \[\rank(\ker m/F_0)=\max\{s_0-1,0\}=0\] and
%$(s_0,s_1)\neq (2,0)$, so $F$ is maximal by the second statement. Along the way of the proof for
%the second statement one can show that the conjuagacy class of $F$ is determined by $k$.

When $(s_0,s_1)=(2,0)$, one can show that $F$ is a maximal abelian subgroup if and only if it is not an
elementary abelian 2-subgroup.

In the below we describe Weyl groups of maximal abelian subgroups of $G$.
Given a maximal abelian subgroup $F$ of $G$, by Proposition \ref{P:BD-2}, we associate a function $m: F
\times F\longrightarrow\{\pm{1}\}$, integers $k$, $s$, $s_0$, $s_1=s-s_0$ with $n=2^{k}s_0+2^{k+1}s_1$
and maps $\mu_1,\mu_2,\dots,\mu_{s_0}: F\longrightarrow\{\pm{1}\}$. Denote by $F_{k,m,\vec{\mu}}$ a
maximal abelian subgroup of $G$ like this, where $\vec{\mu}$ means the unordered tuple $(\mu_1,\mu_2,
\dots,\mu_{s_0})$. For a map $\mu: F\longrightarrow\{\pm{1}\}$, let $a_{\mu}$ be the number of indices
$i$, $1\leq i\leq s_0$ such that $\mu_{i}=\mu$. Denote by $S_{\vec{\mu}}=\prod_{\mu} S_{a_{\mu}}$.

\begin{prop}\label{P:BD-Weyl}
There is an exact sequence \begin{eqnarray*}1&\longrightarrow&\Hom(F/\ker m,B_{F})\rtimes S_{\vec{\mu}}
\longrightarrow W(F_{k,m,\vec{\mu}})\\&\longrightarrow&\Aut(F/\ker m,m,\vec{\mu})\times(\{\pm{1}\}^{s_1}
\rtimes S_{s_1})\\&\longrightarrow& 1.\end{eqnarray*}
\end{prop}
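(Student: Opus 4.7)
The plan is to analyze $W(F)=N_G(F)/C_G(F)$ via its induced action on two canonically associated subgroups of $F$, namely $\ker m$ and $F_0$. I would define a homomorphism
\[
\psi: W(F) \longrightarrow \Aut(F/\ker m, m, \vec{\mu}) \times (\{\pm 1\}^{s_1} \rtimes S_{s_1}),
\]
where the first factor records the induced automorphism of $F/\ker m$ and the second records that of $F_0$. That $\psi$ lands in the claimed target uses: (i) $m$ is intrinsic to $F\subset G$ (defined via commutators), hence preserved; (ii) $B_F$ is characteristic, so $W(F)$ permutes the blocks in $\O(n)^{\pi^{-1}(B_F)}=\O(n_1)\times\cdots\times\O(n_s)$, and since the $\mu_i$ are intrinsic to each $s_0$-block (via $A_i^2=\mu_i(x)I$), the unordered tuple $\vec\mu$ is preserved; (iii) $F_0$ is, up to conjugacy, $\SO(2)^{s_1}$ (modulo the appropriate element), and the induced action of $N_G(F)$ on it factors through sign flips (from $\O(2^{k+1})\setminus\SO(2^{k+1})$ elements in each $s_1$-block that normalize $F_i$ and invert $(F_i)_0$) and permutations of $s_1$-blocks.

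Next I would prove surjectivity by exhibiting explicit normalizers. Signed block-permutation matrices on $s_1$-blocks realize $\{\pm 1\}^{s_1} \rtimes S_{s_1}$; block-permutation matrices on equal-sized, equal-$\mu$ $s_0$-blocks realize the ``block-permutation'' part of $\Aut(F/\ker m, m, \vec\mu)$; and the Weyl-group automorphisms of each maximal abelian $F_i \subset \O(n_i)/\langle -I\rangle$, applied coherently across blocks via the identifications $p_i: F/\ker m \to F_i$, realize the remaining ``inner'' part of $\Aut(F/\ker m, m, \vec\mu)$.

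The heart of the proof is the kernel computation. An element $[\gamma]\in\ker\psi$ corresponds to an automorphism $\alpha$ of $F$ trivial on $F/\ker m$ and on $F_0$, hence of the form $\alpha(x)=x\phi(x)$ for a homomorphism $\phi: F\to \ker m$ vanishing on $F_0$. Since $F/\ker m$ is elementary $2$-abelian, $\phi$ takes values in the $2$-torsion of $\ker m=B_F\cdot F_0$, which is $B_F$. Trivial action on $F_0$ forces $\gamma\in C_G(F_0)=(\O(2^ks_0)\times\U(2^k)^{s_1})/\langle-I\rangle$ by the computation in the proof of Proposition~\ref{P:BD-3}; then trivial action on $F/\ker m$ constrains the $\O(2^ks_0)$-component to permute equal-$\mu$ $s_0$-blocks (contributing $S_{\vec\mu}$) and to act block-diagonally by $B_F$-shifts (contributing $\Hom(F/\ker m, B_F)$), while the $\U(2^k)^{s_1}$-components must centralize each $F_i$ and hence lie in $C_G(F)$. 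The semidirect product structure arises because $S_{\vec\mu}$ permutes the $\{\pm 1\}$-coordinates of $B_F$, and so acts on $\Hom(F/\ker m, B_F)$ by post-composition.

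The main obstacle is this kernel identification. In particular, one must show that potential ``extra'' shifts coming from $\ker m/F_0$ are already subsumed by $\Hom(F/\ker m, B_F)$ (via the $2$-torsion argument together with $\ker m = B_F\cdot F_0$), and that the $\U(2^k)$-components of normalizer elements acting trivially on $F/\ker m$ and $F_0$ necessarily lie in $C_G(F)$. Both points hinge on the explicit centralizer computation from the proof of Proposition~\ref{P:BD-3} together with the maximality of each $F_i$ inside its block $\O(n_i)/\langle -I\rangle$.
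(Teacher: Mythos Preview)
Your approach is essentially the paper's: map $W(F)$ to $\Aut(F/\ker m,m,\vec\mu)\times W(F_0)$, check surjectivity, and identify the kernel via the block decomposition of $C_G(F_0)$ and the action on $B_F$ (the paper packages this last step as an auxiliary homomorphism $p':\ker p\to W(\ker m)$ with image $S_{\vec\mu}$ and kernel $\Hom(F/\ker m,B_F)$, but the content is the same). One small correction: your claim that $\phi$ lands in the $2$-torsion of $\ker m$ needs $F/F_0$ (not just $F/\ker m$) to be elementary $2$-abelian---this is true since each $F_i/(F_i)_0$ is, but you cited the wrong quotient.
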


\begin{proof}
Let $F=F_{k,m,\vec{\mu}}$. The induced action of $W(F)$ on $F/\ker m$ preserves $m$ and $\vec{\mu}$. Hence
there is a homomorphism $p: W(F)\longrightarrow \Aut(F/\ker m, m,\vec{\mu})\times W(F_0),$  which is apparently
a surjective map. It is clear that $W(F_0)\cong\{\pm{1}\}^{s_1}\rtimes S_{s_1}$. There is another homomorphism
$p':\ker p\longrightarrow W(\ker m)$. The image $p'(w)$ for an element $w\in\ker p$ is determined by the action
of $w$ on the first $s_0$-components of $B_{F}$, which induces a permutation on the $s_0$ components isomorphic
to $\O(2^{k})$ of $\O(n)^{\pi^{-1}(B_{F})}$ and hence a permuatution of the indices $\{1,2,\dots,s_0\}$, denoted
by $\sigma$. Since $w$ acts trivially on $F/\ker m$, $\mu_{\sigma(i)}=\mu_{i}$ for each $1\leq i\leq s_0$.
Therefore $\Im p'\subset (\{\pm{1}\}^{s_1}\rtimes S_{s_1})\times\prod_{\mu} S_{a_{\mu}}$. It is clear that
$\ker p'\subset\Hom(F/\ker m,\ker m)$. Moreover considering the preservation of eigenvalues, one shows
$\ker p'\subset\Hom(F/\ker m,B_{F})$. From the description of $F$ as in the proof of Proposition \ref{P:BD-3},
one can show that $$\ker p=\Hom(F/\ker m,B_{F})\rtimes S_{\vec{\mu}}.$$
We reach the conclusion of the proposition.
\end{proof}

%$\Im p'=(\{\pm{1}\}^{s_1}\rtimes S_{s_1})\times\prod_{\mu} S_{a_{\mu}}$, $\ker p'=\Hom(F/\ker m,B_{F})$.

For a general closed abelian subgroup $F$ of $G$ satisfying the condition $(*)$, $C_{G}(F)$ is not necessarily
an abelian subgroup and the description of $W(F)$ is more complicated.

%\begin{remark}\label{R:BD-SO(2) effect}
%For $s_0+1\leq i\leq s$, we can not define $\mu_{i}$ since $(F_{i})_0\cong\SO(2)/\langle
%-I\rangle$. On the other hand, as the above proof showed, the fact of  $(F_{i})_0\cong
%\SO(2)/\langle-I\rangle$ also indicates, we can choose elements in $F$ generating $F/\ker m$
%with the component corresponding to of each index $s_0+1\leq i\leq s$ quite freely.
%In this way, we showed given $B_{F}$, the conjugacy class of $F$ is determined by
%$\{\mu_{i}:1\leq i\leq s_0\}$
%\end{remark}

%Although we have given some combinatorial data in Proposition \ref{P:BD-4} to determine
%the conjugacy of the concerned abelian subgroup, but in general it is hard to classify
%he conjugacy classes of these abelian subgroups from these combinatorial data. The first
%difficulty is, the diagonalizable subgroup $B_{F}$ may be very weird. This difficulty
%disappears if we assume $F$ is a maximal abelian subgroup, so not very serious. The second
%and the essential difficulty is when $s_0$ becomes large, there are too many possible
%different linear structures $(m,\mu_1,\dots,\mu_{s_0})$ on $F/\ker m$.

%Here, we discuss the classification of these linear structures when $s_0\leq 3$. Recall that,
%in \cite{Yu} Subsection 2.4 we defined the notions of symplectic vector spaces $(V,m)$ and
%symplectic metric spaces $(V,m,\mu)$.

\smallskip

As an illustration of the classification of maximal abelian subgroups of $\O(n)/\langle-I\rangle$, we
classify multi-symplectic metric spaces while $s=2$ or $3$.

\begin{prop}\label{P:multiple SMS}
Given a vector space $V$ of dimension $2k$ over the field $\mathbb{F}_2$ with a nondegenerate bilinear
form $m$, for any $k\geq 1$, there exist two isomorphism classes of multi-symplectic metric spaces
$(V,m,\mu_1,\mu_2)$ such that $(V,m,\mu_{i})\cong V_{0,k;0,0}$, $i=1,2$; for any $k\geq 2$, there exist
four isomorphism classes of multi-symplectic metric spaces $(V,m,\mu_1,\mu_2,\mu_3)$ such that
$(V,m,\mu_{i})\cong V_{0,k;0,0}$, $i=1,2,3$.
\end{prop}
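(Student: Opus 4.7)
The plan is to reduce the classification to orbits of the orthogonal group $O^{+}(V,\mu_{1})=\Sp(V,m)_{\mu_{1}}$ acting on vectors or pairs of vectors, and then apply Witt's extension theorem. Two observations drive the argument. Since each $\mu_{i}$ refines the same $m$, the sum $\mu_{i}+\mu_{j}$ is $\mathbb{F}_{2}$-linear, so by nondegeneracy of $m$ there is a unique $v_{ij}\in V$ with $\mu_{i}(x)+\mu_{j}(x)=m(v_{ij},x)$; these obey $v_{ij}+v_{jk}=v_{ik}$. A short symplectic-basis calculation shows that for any quadratic form $q$ refining $m$ and any $v\in V$,
\[
\mathrm{Arf}\bigl(q+m(v,\cdot)\bigr)=\mathrm{Arf}(q)+q(v).
\]
Since $(V,m,\mu_{1})\cong V_{0,k;0,0}$ means $\mathrm{Arf}(\mu_{1})=0$, the condition $(V,m,\mu_{i})\cong V_{0,k;0,0}$ becomes $\mu_{1}(v_{1i})=0$, i.e.\ $v_{1i}$ is $\mu_{1}$-null.

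For $s=2$, after fixing $\mu_{1}$ the remaining datum is $v_{12}\in\mu_{1}^{-1}(0)$, classified up to $O^{+}(V,\mu_{1})$ (the transposition $(1\,2)$ preserves $v_{12}=v_{21}$). By Witt's theorem $O^{+}(V,\mu_{1})$ has exactly two orbits on $\mu_{1}^{-1}(0)$, namely $\{0\}$ and the set of nonzero null vectors, yielding two isomorphism classes.

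For $s=3$, after fixing $\mu_{1}$ the data becomes $(v_{2},v_{3}):=(v_{12},v_{13})\in\mu_{1}^{-1}(0)^{2}$; put $v_{23}=v_{2}+v_{3}$. Witt's theorem partitions such pairs into six $O^{+}(V,\mu_{1})$-orbits, distinguished by: (i) $v_{2}=v_{3}=0$; (ii) $v_{2}=0\neq v_{3}$; (iii) $v_{3}=0\neq v_{2}$; (iv) $v_{2}=v_{3}\neq 0$; (v) $v_{2},v_{3}$ linearly independent with $m(v_{2},v_{3})=0$ (nonempty only for $k\geq 2$); (vi) $v_{2},v_{3}$ linearly independent with $m(v_{2},v_{3})=1$. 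Directly tracking the effect shows that any $S_{3}$-permutation of $(\mu_{1},\mu_{2},\mu_{3})$, followed by an $\Sp(V,m)$-element re-basing so that the first form becomes $\mu_{1}$, induces on the ordered triple $(v_{12},v_{13},v_{23})$ precisely the natural $S_{3}$-permutation of its three entries. Consequently (ii), (iii), (iv) (each having exactly one zero entry among $v_{12},v_{13},v_{23}$) fuse into one $\Sp\times S_{3}$-orbit, while (i), (v), (vi) are each $S_{3}$-stable and pairwise inequivalent, since the number of zero entries and the value $m(v_{2},v_{3})$ are $S_{3}$-invariants. This yields exactly four orbits when $k\geq 2$.

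The main technical obstacle is verifying that the re-basing step induces the natural $S_{3}$-permutation of $(v_{12},v_{13},v_{23})$, in particular preserving $m(v_{2},v_{3})$ (which is what separates orbits (v) and (vi)). This reduces to tracking how an element $g\in\Sp(V,m)$ used to restore $\mu_{1}$ as the base acts on the pair of null vectors, together with the characteristic-two identity $m(v_{2},v_{2}+v_{3})=m(v_{2},v_{3})$ to ensure the nullity and $m$-value of the new pair match the predicted entries.
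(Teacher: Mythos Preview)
Your argument is correct and takes a genuinely different route from the paper's. The paper proceeds by direct subspace decomposition: for $s=2$ it analyses the hyperplane $V'=\ker(\mu_2\mu_1^{-1})$ and identifies $(V',m|_{V'},\mu_1|_{V'})$ up to isomorphism; for $s=3$ with all $\mu_i$ distinct it passes to the codimension-two subspace $V'=\ker(\mu_2\mu_1^{-1})\cap\ker(\mu_3\mu_1^{-1})$ and splits according to whether $m|_{V'}$ is degenerate, producing an explicit symplectic basis in each case. You instead linearise the problem by writing $\mu_i=\mu_1+m(v_{1i},\cdot)$, convert the Arf condition into $\mu_1$-nullity of the $v_{1i}$, and reduce to Witt-type orbit counting of null-vector tuples under $O^{+}(V,\mu_1)$ together with the residual $S_s$-action. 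The two approaches are tightly related: your $v_{1i}$ are precisely the vectors whose $m$-orthogonal complements are the paper's kernels $\ker(\mu_i\mu_1^{-1})$, and your dichotomy $m(v_2,v_3)=0$ versus $1$ is exactly the paper's ``$m|_{V'}$ degenerate'' versus ``nondegenerate''. Your method is more systematic and scales more transparently to larger~$s$; the paper's is more hands-on, yielding explicit normal forms without invoking the Arf/Witt machinery. One point to phrase carefully: the $S_3$-action you describe on $(v_{12},v_{13},v_{23})$ is the natural permutation \emph{followed by} the re-basing element $g\in\Sp(V,m)$, not a literal permutation of the three vectors (indeed $gv_{23}$ is $\mu_1$-null even when $v_{23}$ is not). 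Your argument accounts for this correctly, since the invariants you extract---the number of zero entries and the common value $m(v_{ij},v_{jk})=m(v_{ij},v_{ik})$---are genuine $\Sp\times S_3$-invariants.
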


\begin{proof}
For $s=2$, we have two cases to consider according to $\mu_1=\mu_2$ or $\mu_1\neq\mu_2$. While $\mu_1=
\mu_2$, there exists a unique isomorphism class since $(V,m,\mu_{1})\cong V_{0,k;0,0}$. While $\mu_1\neq
\mu_2$, since $\mu_1,\mu_2$ are both compatible with $m$, $\mu_2\mu_1^{-1}: V\longrightarrow\mathbb{F}_2$
is a homomorphism. Let $V'=\ker(\mu_2\mu_1^{-1})$. Then $V'\subset V$ is a subspace of codimension one.
Thus $\rank(\ker(m|_{V'}))=1$ and $\defe(V',\mu_1)=\frac{1}{2}(\defe(V,\mu_1)+\defe(V,\mu_2))>0$. Hence
$(V',m|_{V'},\mu_1|_{V'})\cong V_{1,k-1;0,0}$ (cf. \cite{Yu} Proposition 2.29). Therefore the isomorphism
class of $(V,m,\mu_1,\mu_2)$ is determined uniquely.

%=\frac{1}{2}(\defe(V',\mu_1|_{V'})+\defe(V',\mu_2|_{V'}))

For $s=3$, we have three cases to consider: $\mu_1=\mu_2=\mu_3$; $\mu_1=\mu_2\neq\mu_3$; $\mu_1\neq\mu_2,
\mu_3$ and $\mu_2\neq\mu_3$. While $\mu_1=\mu_2=\mu_3$, there is a unique isomorphism class since
$(V,m,\mu_{1})\cong V_{0,k;0,0}$. While $\mu_1=\mu_2\neq\mu_3$, similarly as the above proof for $s=2$ and
in the case of $\mu_1\neq\mu_2$, we get a unique isomorphism class. While $\mu_1\neq\mu_2,\mu_3$ and
$\mu_2\neq\mu_3$, let $V'=\ker(\mu_2\mu_1^{-1})\cap\ker(\mu_3\mu_1^{-1})$. Then $V'$ is a subspace of
codimension two of $V$. There are two cases according to $m|_{V'}$ is degenerate or not. In the case of
$m|_{V'}$ is nondegenerate, let $V''=\{x\in V: m(x,y)=0, \forall y\in V'\}$. Then $V=V'\oplus V''$ and
$m|_{V''}$ is also nondegenerate. Since $\mu_1|_{V'}=\mu_2|_{V'}=\mu_3|_{V'}$, one has $\defe(V'',\mu_1|_{V''})
=\defe(V'',\mu_2|_{V''})=\defe(V'',\mu_3|_{V''})$. If $\defe(V'',\mu_1|_{V''})<0$, then $\mu_1|_{V''}=
\mu_2|_{V''}=\mu_3|_{V''}$ and so $\mu_1=\mu_2=\mu_3$, which contradicts the assumption. If $\defe(V'',
\mu_1|_{V''})>0$, then $(\mu_1|_{V''},\mu_2|_{V''},\mu_3|_{V''})$ has only one possibility if the order among
them is overlooked. Hence we get one isomorphism type of $(V,m,\mu_1,\mu_2,\mu_3)$ in this case. In the case
of $m|_{V'}$ is degenerate, choose $V'''\subset V'$ such that $V'=V'''\oplus\ker(m|_{V'})$ and let
$V''=\{x\in V: m(x,y)=0, \forall y\in V'''\}$. Then, $V=V'''\oplus V''$, $m|_{V'''}$, $m|_{V''}$ are
nondegenerate, and $\dim V''=4$. One can show that $\defe V'''>0$ and $\defe (\mu_1|_{V''})>0$. Moreover we
can find generators $x_1,y_1,x_2,y_2$ of $V''$ such that $V''=\langle x_1,y_1\rangle\oplus\langle x_2,y_2
\rangle$ as a symplectic vector space, \[(\mu_1(x_1),\mu_1(x_2),\mu_1(y_1),\mu_1(y_2))=(1,1,1,1),\]
\[(\mu_2(x_1),\mu_2(x_2),\mu_2(y_1),\mu_2(y_2))=(1,1,1,-1),\] \[(\mu_3(x_1),\mu_3(x_2),\mu_3(y_1),\mu_3(y_2))
=(1,1,-1,1).\] Thus we get one isomorphism class of $(V,m,\mu_1,\mu_2,\mu_3)$ in this case.
\end{proof}

%\begin{remark}
%$\Sp(k)/\Sp(k;0,0)=\{\mu|\det\mu>0\}$. A formulation in terms of quadratic forms.
%\end{remark}

%We remark here, though Propositions \ref{P:BD-2} and \ref{P:BD-3} give a classification of
%abelian subgroups of $\O(n)/\langle-I\rangle$, it is difficult to get the exact list of all
%conjugacy classes in general. One difficulty is the classification of the subgroup
%$B_{F}$ involves a lot of combinatorics, but this difficulty disappear if we assume $F$ is
%a maximal abelian subgroup. Another difficulty is, the numbers
%$\{s_{\phi},s'_{\phi}|\phi\in\Sp(k)/\Sp(k;0,0)\}$ have many possibilities and an ambiguity
%by (left) transferring elements of $\Sp(k;0,0)$ is also involved, but the question becomes
%easier when $s=s_0+s_1$ is small. We will do such kind of combinatorial argument a lot of
%times in our investigation of abelian subgroups of compact exceptional simple Lie groups
%satisfying condition $(*)$.

\section{Projective symplectic groups}\label{S:BC}

Let $G=\Sp(n)/\langle-I\rangle$ and $\pi:\Sp(n)\longrightarrow\Sp(n)/\langle-I\rangle$ be the natural
projection. The classification of abelian subgroups of $\Sp(n)/\langle-I\rangle$ satisfying the
condition $(\ast)$ is similar as the classification in the $\O(n)/\langle-I\rangle$ case. We give the
main steps below, but omit some proofs. First we define a map $m: F\times F\longrightarrow\{\pm{1}\}$.
For any $x=[A],y=[B]\in F$, if $[A,B]=\lambda_{A,B}I$, then $m(x,y)=\lambda_{A,B}$. Hence $m$ is an
antisymmetric bimultiplicative function on $F$. Let \[\ker m=\{x\in F|m(x,y)=1,\forall y\in F\}.\]

\begin{lemma}\label{L:C-kerm}
If $F$ is a closed abelian subgroup of $G$ satisfying the condition $(\ast)$, then for any $x\in\ker m$,
there exists $y\in F_0$ such that $xy\sim [I_{p,n-p}]$ for some $p$, $0\leq p\leq n$.
\end{lemma}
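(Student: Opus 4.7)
The plan is to mirror the proof of Lemma \ref{L:BD-kerm} with $\O(n)$ replaced by $\Sp(n)$. Three ingredients are needed: a standard form for a single element of $\Sp(n)$ up to conjugacy, the structure of its centralizer, and the use of condition $(\ast)$ to force the identity component of the center of that centralizer to lie inside $F_{0}$.

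First, I would bring $x$ to a convenient form. Every element of $\Sp(n)$ is conjugate in $\Sp(n)$ to a quaternionic diagonal matrix whose entries lie in $\{e^{\mathbf{i}\theta}:\theta\in[0,\pi]\}\subset\bbH$. Hence, after possibly replacing $F$ by a conjugate subgroup, I may assume
\[x=[A],\qquad A=\diag\{-I_{p},\,I_{q},\,e^{\mathbf{i}\theta_{1}}I_{n_{1}},\dots,e^{\mathbf{i}\theta_{s}}I_{n_{s}}\},\]
with $0<\theta_{1}<\cdots<\theta_{s}<\pi$ and $p+q+n_{1}+\cdots+n_{s}=n$.

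Next I would compute the centralizer $\Sp(n)^{A}$. The $\pm I$ blocks contribute $\Sp(p)\times\Sp(q)$. On a block $e^{\mathbf{i}\theta_{k}}I_{n_{k}}$ with $0<\theta_{k}<\pi$, the condition of commuting with $\mathbf{i}$ restricts entries to the subring $\bbR[\mathbf{i}]\cong\bbC$, and intersecting with $\Sp(n_{k})$ yields $\U(n_{k})$. Therefore
\[\Sp(n)^{A}=\Sp(p)\times\Sp(q)\times\U(n_{1})\times\cdots\times\U(n_{s}).\]
Since $x\in\ker m$ one has $F\subset\Sp(n)^{A}/\langle-I\rangle$, and condition $(\ast)$ then forces $Z(\Sp(n)^{A}/\langle-I\rangle)_{0}\subset F_{0}$. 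This identity component is the image under $\pi$ of $\{1\}\times\{1\}\times\Z_{n_{1}}\times\cdots\times\Z_{n_{s}}$, so taking
\[y=[\diag\{I_{p},\,I_{q},\,e^{-\mathbf{i}\theta_{1}}I_{n_{1}},\dots,e^{-\mathbf{i}\theta_{s}}I_{n_{s}}\}]\in F_{0},\]
one gets $xy=[\diag\{-I_{p},I_{n-p}\}]=[I_{p,n-p}]$, as required.

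The step I expect to need the most care is the centralizer computation, since working inside $M_{n}(\bbH)$ is subtler than inside $M_{n}(\bbR)$: the identification of the commutant of $\mathbf{i}I_{n_{k}}$ with $M_{n_{k}}(\bbC)$ and of its intersection with $\Sp(n_{k})$ with $\U(n_{k})$ uses the standard embedding $\bbC\hookrightarrow\bbH$ but must be carried out compatibly with the quaternionic hermitian form preserved by $\Sp(n)$. Everything else should be a direct transcription of the orthogonal case, and I do not anticipate any additional difficulty specific to the projective quotient.
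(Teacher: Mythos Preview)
Your proposal is correct and is precisely the adaptation of the proof of Lemma~\ref{L:BD-kerm} that the paper intends: the paper explicitly omits the proof of Lemma~\ref{L:C-kerm}, saying only that the symplectic case is handled ``similar as the classification in the $\O(n)/\langle-I\rangle$ case.'' Your use of the quaternionic diagonal form $A=\diag\{-I_{p},I_{q},e^{\mathbf{i}\theta_{1}}I_{n_{1}},\dots,e^{\mathbf{i}\theta_{s}}I_{n_{s}}\}$ in place of the real rotation blocks $A_{k}(\theta)$, together with the centralizer identification $\Sp(n)^{A}=\Sp(p)\times\Sp(q)\times\U(n_{1})\times\cdots\times\U(n_{s})$, is exactly the natural translation, and the remaining steps (condition $(\ast)$ forcing $Z(\Sp(n)^{A}/\langle-I\rangle)_{0}\subset F_{0}$, then choosing $y$ to cancel the non-involutive part of $A$) go through verbatim.
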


Let \[B_{F}=\{x\in\ker m: A^{2}=1,\forall A\in\pi^{-1}(x)\}.\] It is a subgroup of $\ker m$. By Lemma
\ref{L:C-kerm}, $\ker m=B_{F}F_0$. Let \[H_2=\langle[I_{n/2,n/2}],[J'_{n/2}]\rangle\] and \[H'_2=
\langle\mathbf{i} I,\mathbf{j} I\rangle.\]

\begin{lemma}\label{L:C-1}
If $F$ is a closed abelian subgroup of $G$ satisfying the condition $(\ast)$ and with $B_{F}=1$, then
either $n=1$ and $F$ is a maximal torus, or there exists $k\geq 1$ such that $F=(H_2)^{k-1}\times H'_2
\times\Delta^{k-1}(F')$, where $n'=\frac{n}{2^{k-1}}=1$ or $2$, $F'=\SO(2)/\{\pm{I}\}$ if $n'=2$, and
$F'=1$ if $n'=1$.
\end{lemma}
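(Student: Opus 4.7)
The plan is to mirror the strategy of Lemma \ref{L:BD-1}, with the roles of $\O$ and $\Sp$ exchanged. First, I would use the hypothesis $B_{F}=1$ together with Lemma \ref{L:C-kerm} (which asserts $\ker m=B_{F}F_{0}$) to conclude $\ker m=F_{0}$. Since $m$ takes values in $\{\pm 1\}$, one has $x^{2}\in\ker m=F_{0}$ for every $x\in F$, so $F/F_{0}$ is an elementary abelian $2$-group. I would then split $F=F'\times F_{0}$ with $F'\cong(\mathbb{Z}/2)^{2k}$ a finite complement, on which the restriction of $m$ is nondegenerate.

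Next I would invoke the classification of rank-$2k$ elementary abelian $2$-subgroups of $\Sp(n)/\langle-I\rangle$ carrying a nondegenerate $m$, as in \cite{Yu}: the conjugacy class of $F'$ is determined by the sign of its defect, giving two possibilities. In one case $F'\sim(H_{2})^{k}$ and the centralizer $\Sp(n)^{\pi^{-1}(F')}$ equals $\Delta^{k}(\Sp(n'))$ with $n'=n/2^{k}$; in the other $F'\sim(H_{2})^{k-1}\times H'_{2}$ and the centralizer equals $\Delta^{k-1}(\O(n'))$ with $n'=n/2^{k-1}$, via the identification $\Sp(m)^{H'_{2}}=\O(m)$ coming from $H'_{2}=\langle\mathbf{i}I,\mathbf{j}I\rangle$ acting as left quaternionic scalars. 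In either case the condition $(\ast)$ forces $F_{0}$ to be a maximal torus of this centralizer.

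I would then use $B_{F}=1$ to restrict $n'$. A maximal torus of $\Sp(m)/\langle-I\rangle$ carries a nontrivial class $[A]$ with $A^{2}=I$ in $\Sp(m)$ and $A\neq\pm I$ as soon as $m\geq 2$ (take $A=\diag(1,-1,I_{m-2})$), while a maximal torus of $\O(m)/\langle-I\rangle$ does so as soon as $m\geq 3$. Such an element would contribute nontrivially to $B_{F}$ after applying $\Delta^{k}$ or $\Delta^{k-1}$, so $B_{F}=1$ forces $n'=1$ in the first case and $n'\in\{1,2\}$ in the second. The degenerate subcase $F=F_{0}$ with $F'$ trivial corresponds to $n=1$ and $F$ a maximal torus of $\Sp(1)/\langle-I\rangle\cong\SO(3)$.

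Finally I would show that the remaining possibility, $F'\sim(H_{2})^{k}$ with $n'=1$, is conjugate to the case $F'\sim(H_{2})^{k-1}\times H'_{2}$ with $n'=2$, so that only the latter family needs to appear in the statement. For $k=1$ the point is that inside $F=H_{2}\times\Delta(\SO(2))$ one can switch the splitting: replacing the generator $[I_{n/2,n/2}]$ by $[I_{n/2,n/2}\cdot\Delta(\mathbf{i})]$ absorbs the torus involution $[\Delta(\mathbf{i})]$ (whose lift squares to $-I$) and turns the nondegenerate $2$-part from $H_{2}$ into $H'_{2}$, since all three nontrivial elements of the new complement now have lifts squaring to $-I$ and the required anticommutation relations persist. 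The torus then reparametrises as $\Delta^{k-1}(\SO(2)/\{\pm I\})$, since any two $1$-dimensional subtori of a maximal torus of $\Sp(2)$ are Weyl-conjugate. The general $k\geq 1$ follows by performing the same maneuver on the last block. The main obstacle is precisely this rearrangement: one must carefully track how lifts of the various generators of $(H_{2})^{k}$, $H'_{2}$, and the torus multiply in $\Sp(n)$, keeping straight which lifts square to $I$ versus $-I$, to verify that the rewritten subgroup really has the stated form. This is the symplectic mirror of the final reduction in the proof of Lemma \ref{L:BD-1}.
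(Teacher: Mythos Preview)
Your outline follows essentially the same route as the paper's proof: reduce to $\ker m=F_0$ via $B_F=1$, split off a finite complement $F'$, invoke \cite{Yu} Proposition~2.16 to see that $F'$ is conjugate to $(H_2)^{k}$ or $(H_2)^{k-1}\times H'_2$, use $B_F=1$ to bound $n'$, and finally absorb the $(H_2)^{k}$ case into the other by modifying the complement with a torus element whose lift squares to $-I$.

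There is, however, a concrete slip in your final reduction. Replacing only the single generator $[I_{n/2,n/2}]$ by $[I_{n/2,n/2}\cdot\Delta(\mathbf{i})]$ does \emph{not} turn $H_2$ into $H'_2$. For $n=2$ the new complement $\langle[\diag(-\mathbf{i},\mathbf{i})],[J'_1]\rangle$ has lifts $\diag(-\mathbf{i},\mathbf{i})$, $J'_1$, and $\diag(-\mathbf{i},\mathbf{i})J'_1=\bigl(\begin{smallmatrix}0&-\mathbf{i}\\ \mathbf{i}&0\end{smallmatrix}\bigr)$ squaring to $-I$, $I$, $I$ respectively, so only one nontrivial element has $\mu=-1$: this is still the $H_2$ type, not $H'_2$. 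To reach the $H'_2$ type (all three nontrivial $\mu$-values equal to $-1$) you must multiply \emph{both} generators by $[\Delta(\mathbf{i})]$: the two new generators then have lifts squaring to $-I$, and their product in $G$ is $[I_{n/2,n/2}J'_{n/2}]$ (since $[\Delta(\mathbf{i})]^{2}=[-I]=1$ in $G$), whose lift $\bigl(\begin{smallmatrix}0&-1\\ 1&0\end{smallmatrix}\bigr)$ also squares to $-I$. The paper's proof performs exactly this step but less explicitly: it just chooses $[A]\in F_0$ with $A^{2}=-I$ and asserts that one can then replace $F'$ by a finite complement conjugate to $(H_2)^{k-1}\times H'_2$. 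With this correction your argument goes through.
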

%Moreover in the latter case, $F'=1$ when $n'=1$; and $F'=\SO(2)/\langle-I\rangle$ when $n'=2$.

\begin{proof}
Let $2k=\rank F/\ker m$. By Lemma \ref{L:C-kerm}, $\ker m=B_{F}F_0=F_0$ as we suppose that $B_{F}=1$.
If $k=0$, then $F=F_0$ and it is a maximal torus of $G$. In this case $n=1$ since it is assumed that
$B_{F}=1$. If $k\geq 1$, choosing a complement $F'$ of $F_0$ in $F$, then $F_0$ is a maximal torus of
$(\Sp(n)/\langle-I\rangle)^{F'}.$ Since the bimultiplicative function $m$ is nondegenerate on
$F/\ker m=F/F_0$, it is nondegenerate on $F'$. By \cite{Yu} Proposition 2.16, we have $F'=(H_2)^{k-1}
\times H'_2$ or $F=(H_2)^{k}$. Similarly as the proof of Lemma \ref{L:BD-1}, in the second case, we can
show that $\dim F>0$. Choosing some $[A]\in F_0$ with $A\in\Sp(n)$ and $A^2=-I$, we may replace $F'$
by a different finite subgroup conjugate to $(H_2)^{k-1}\times H'_2$ and hence return to the first case.
In the first case, we have $(\Sp(n)/\langle-I\rangle)^{F'}\cong\O(n/2^{k-1})/\langle-I\rangle.$ Therefore
$n/2^{k-1}=1$ or $2$ since $F_0$ is a maximal torus of $\O(n/2^{k-1})/\langle-I\rangle$ and it is assumed
that $B_{F}=1$.
\end{proof}

\begin{prop}\label{P:C-2}
If $F$ is a closed abelian subgroup of $G$ satisfying the condition $(\ast)$, then either $F$ is a maximal
torus of $G$, or there exists $k\geq 1$, $s_0,s_1\geq 0$ such that $n=2^{k-1}s_0+2^{k}s_1$, $\dim F_0=s_1$,
$\rank F/\ker m=2k$ and $\rank(\ker m/F_0)\leq\max\{s_0-1,0\}$.
\end{prop}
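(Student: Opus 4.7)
The plan is to follow the template of Proposition~\ref{P:BD-2}. If $k=\frac{1}{2}\rank(F/\ker m)=0$, running the block decomposition below forces every block to have $n_i=1$ (the only option in Lemma~\ref{L:C-1} with $2k_i=0$ and $B_{F_i}=1$), whence $C_G(F)$ is conjugate to a maximal torus of $G$, condition $(\ast)$ forces $\dim F=n$, and $F$ fills the centralizer, so $F$ is a maximal torus.

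Assume now $k\geq 1$. Since $B_F\subset\ker m$, $F\subset G^{\pi^{-1}(B_F)}$. The subgroup $\pi^{-1}(B_F)\subset\Sp(n)$ consists of elements squaring to $I$ and is therefore simultaneously diagonalizable over $\bbH$; after conjugation we may assume
\[\Sp(n)^{\pi^{-1}(B_F)}=\Sp(n_1)\times\cdots\times\Sp(n_s),\quad n_1+\cdots+n_s=n.\]
Let $F_i\subset\Sp(n_i)/\langle-I\rangle$ be the image of the $i$-th coordinate projection $p_i$, with induced bimultiplicative function $m_i$. From $m(x,y)=m_i(p_i(x),p_i(y))$ and the fact that $(\ast)$ is preserved by surjective homomorphisms, each $F_i$ also satisfies $(\ast)$.

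Next I would mirror the argument of Proposition~\ref{P:BD-2} to show $B_{F_i}=1$: a nontrivial element would, via Lemma~\ref{L:C-kerm}, generate an element of $B_F$ whose centralizer strictly enlarges the block decomposition, a contradiction. The induced map $F/\ker m\to F_i/\ker m_i$ is a form-preserving surjection of nondegenerate symplectic $\bbF_2$-spaces, hence an isomorphism, so $\rank F_i/\ker m_i=2k$ for every $i$. Applying Lemma~\ref{L:C-1}---with the $n_i=1$ option excluded by $k\geq 1$---gives $n_i=2^{k-1}$ (when $(F_i)_0=1$) or $n_i=2^k$ (when $(F_i)_0\cong\SO(2)/\{\pm I\}$). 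Letting $s_0,s_1$ count the two types yields $n=2^{k-1}s_0+2^ks_1$, and condition $(\ast)$ together with each $F_i$ being maximal abelian in its block forces $\dim F_0=s_1$.

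Finally, for the rank bound on $\ker m/F_0$, I would use $\ker m=B_F F_0$ (from Lemma~\ref{L:C-kerm}) to write $\ker m/F_0\cong B_F/(B_F\cap F_0)$. Since $p_i(B_F)\subset B_{F_i}=1$ for each $i$, $B_F$ lies in the finite kernel of the joint projection $F\to\prod_i F_i$, which embeds into $\{[(\epsilon_1 I,\dots,\epsilon_s I)]:\epsilon_i\in\{\pm 1\}\}/\langle-I\rangle$; this yields $\rank B_F\leq s-1$. A direct computation of the $2$-torsion of $F_0$ in each block gives $\rank(B_F\cap F_0)\geq s_1-\delta_{s_0,0}$, whence $\rank(\ker m/F_0)\leq s_0-1+\delta_{s_0,0}=\max\{s_0-1,0\}$. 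The main obstacle will be this last computation: in the symplectic case the $1$-dimensional factors $(F_i)_0$ are doubly covered by $\SO(2)\subset\Sp(n_i)$ containing $-I$, so one must carefully distinguish those $2$-torsion elements of $F_0$ whose $\Sp(n)$-representatives square to $+I$ (and therefore lie in $B_F$) from those squaring to $-I$; the bookkeeping separates cleanly into the cases $s_0=0$ and $s_0\geq 1$.
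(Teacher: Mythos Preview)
Your proposal is correct and follows the same approach as the paper, which simply states that the proof is ``similar as that of Proposition~\ref{P:BD-2}.'' You have in fact filled in more detail than the paper provides: the reduction to blocks via $B_F$, the verification that $B_{F_i}=1$, the application of Lemma~\ref{L:C-1} to each block, and the rank computation $\rank(\ker m/F_0)=\rank B_F-\rank(B_F\cap F_0)\leq (s-1)-(s_1-\delta_{s_0,0})$ all mirror the orthogonal case exactly, and your anticipated bookkeeping on the $2$-torsion of $F_0$ is precisely what the paper does in the proof of Proposition~\ref{P:BD-2}.
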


The proof is similar as that of Proposition \ref{P:BD-2}. Note that we could regard the case of $F$ being
a maximal torus as the case of $k=0$, $s_0=0$ and $s_1=n$. By Proposition \ref{P:C-2}, any abelian subgroup
of $\Sp(n)$ satisfying the condition $(*)$ is a maximal torus; in particular $\Sp(n)$ has no finite abelian
subgroups satisfying the condition $(*)$.

\smallskip

Given an abelian subgroup $F$ of $G$ satisfying the condition $(\ast)$, the centralizer
$\Sp(n)^{\pi^{-1}(B_{F})}$ possesses a blockwise decomposition \[\Sp(n)^{\pi^{-1}(B_{F})}\sim\Sp(n_1)\times
\cdots\times\Sp(n_{s}).\] Let $F_{i}$ be the image of $F$ under the $i$-th projection
$p_{i}:\Sp(n)^{\pi^{-1}(B_{F})}\rightarrow\Sp(n_{i})/\langle-I\rangle.$ If $F$ is  not a maximal torus,
by Lemma \ref{L:C-1} and Proposition \ref{P:C-2}, we may assume that $n_{i}=2^{k-1}$ if $1\leq i\leq s_0$,
and $n_{i}=2^{k}$ if $s_0+1\leq i\leq s$. For $1\leq i\leq s_0$, define $\mu_{i}: F/\ker m\longrightarrow
\{\pm{1}\}$ by $\mu_{i}(x)=\mu(p_{i}(x))$, $x\in F$. As $p_{i}(\ker m)=1$, $\mu_{i}$ is well defined.
Moreover $p_{i}: F/\ker m\longrightarrow F_{i}$ is an isomorphism transferring $(m,\mu_{i})$ to $(m_{i},\mu)$.
In this way, we get a linear structure $(m,\mu_1,\cdots,\mu_{s_0})$ on $F/\ker m$. Note that, each $\mu_{i}$
is compatible with $m$, i.e., $m(x,y)=\mu_{i}(x)\mu_{i}(y)\mu_{i}(xy)$ for any $x,y\in F$. Moreover by Lemma
\ref{L:C-1}, one has $(F/\ker m,m,\mu_{i})\cong V_{0,k-1;0,1}$ (cf. \cite{Yu}, Subsection 2.4) for each
$1\leq i\leq s_0$.

%Similar as for orthogonal groups, we have the following propositions.

\begin{prop}\label{P:C-4}
The conjugacy class of a closed abelian subgroup $F$ of $G$ satisfying the condition $(\ast)$ is determined
by the integer $k=\frac{1}{2}\rank(F/\ker m)$, the conjugacy class of the subgroup $B_{F}$, and the linear
structure $(m,\mu_1,\cdots,\mu_{s_0})$ on $F/\ker m$.
\end{prop}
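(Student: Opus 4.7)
The plan is to follow the same strategy as in Proposition \ref{P:BD-4}, exploiting the blockwise decomposition coming from $B_F$ together with Lemma \ref{L:C-1} and Proposition \ref{P:C-2}. By the normalization already described in the paragraphs preceding the proposition, I may assume the ambient decomposition
\[\Sp(n)^{\pi^{-1}(B_F)} = \Sp(n_1)\times\cdots\times\Sp(n_s),\]
with $n_i = 2^{k-1}$ for $1\le i\le s_0$ and $n_i = 2^{k}$ for $s_0+1\le i\le s$, and I may further arrange that $F_0$ is put into its standard form (trivial on the first $s_0$ blocks and a product of copies of $\SO(2)/\langle\pm I\rangle$ on the last $s_1$ blocks). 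The data of $B_F$ (up to conjugacy) fixes the decomposition itself, and the data of $k$ together with each $\mu_i$ fixes each projection $F_i$ up to conjugacy via Lemma \ref{L:C-1}.

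The substance of the proof is to show that these data also determine how the components fit together inside $F$. First I would fix a symplectic basis $x_1,y_1,\ldots,x_k,y_k$ of $F/\ker m$ with respect to $m$, i.e. generators whose only nontrivial $m$-values are $m(x_j,y_j)=-1$. I would then lift each $x_j,y_j$ to elements of $F$ one pair at a time. For an index $i$ with $s_0+1\le i\le s$, because $F_i/(F_i)_0$ is an elementary abelian $2$-group with nondegenerate form $m_i$ and $(F_i)_0$ is a circle, I can (after multiplying by an element of $(F_i)_0$) force the $i$-th component of each lift to coincide with a fixed element in the conjugacy class of $I_{2^{k-1},2^{k-1}}$ or $J_{2^{k-1}}$ of $\Sp(2^{k-1})/\langle -I\rangle$. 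For $1\le i\le s_0$ the conjugacy class of the $i$-th component is forced already by $\mu_i$: Lemma \ref{L:C-1} tells us $(F/\ker m, m, \mu_i)\cong V_{0,k-1;0,1}$, so any two lifts of the chosen symplectic basis with prescribed $\mu_i$-values are $\Sp(n_i)$-conjugate. The compatibilities between the blocks are then read off by applying a block-diagonal conjugation in $\Sp(n_1)\times\cdots\times\Sp(n_s)/\langle -I\rangle$, and the ambiguity in the overall sign is absorbed because we work modulo $\langle(-I,\ldots,-I)\rangle$.

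Inductively, this produces canonical representatives $x_1,y_1,\ldots,x_k,y_k$ in $F$ whose joint conjugacy class is determined solely by $k$, the $s_i$'s, and the tuple $(\mu_1,\ldots,\mu_{s_0})$ (up to permutation). Since by construction
\[F=\langle B_F, F_0, x_1,y_1,\ldots,x_k,y_k\rangle,\]
and $B_F$ and $F_0$ have already been pinned down in the chosen normal form, the conjugacy class of $F$ in $G$ is determined by $k$, the conjugacy class of $B_F$, and the multi-symplectic metric space $(F/\ker m, m, \mu_1,\ldots,\mu_{s_0})$.

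The main obstacle is the matching step on the first $s_0$ blocks: one needs to know that once $\mu_i(x_j)$ and $\mu_i(y_j)$ are prescribed, the $i$-th components of lifts of $x_j$ and $y_j$ are simultaneously $\Sp(2^{k-1})$-conjugate to a fixed model, and that these conjugations can be performed coherently as $j$ varies. This is exactly the content of the uniqueness part of Lemma \ref{L:C-1} applied to the finite symplectic subgroup $\langle x_j,y_j\rangle\cdot(F_i)_0$ for each $i$, combined with the fact that the centralizer $C_{\Sp(2^{k-1})}(\pi^{-1}(F_i))$ acts transitively on the possible joint lifts. Once this is in hand, the remaining arguments mirror the orthogonal case of Proposition \ref{P:BD-4} verbatim.
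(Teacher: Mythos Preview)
Your proposal is correct and follows essentially the same approach the paper intends: the paper simply says the proof is ``along the same line as that of Proposition~\ref{P:BD-4},'' and your argument is precisely the symplectic translation of that proof---normalize $B_F$ to obtain the block decomposition, pin down each $F_i$ via Lemma~\ref{L:C-1}, and then match the blocks by inductively lifting a symplectic basis of $F/\ker m$ with components determined by the $\mu_i$ on the first $s_0$ blocks and adjusted via the circle $(F_i)_0$ on the last $s_1$ blocks. One small slip: for $s_0+1\le i\le s$ the block is $\Sp(2^{k})/\langle-I\rangle$, not $\Sp(2^{k-1})/\langle-I\rangle$, so the standard representatives live in $\Sp(2^{k})$.
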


The proof is along the same line as that of Proposition \ref{P:BD-4}.

%From the proof of the Propositions \ref{P:BD-2}, we get $F_1\cong (H_2)^{k}$. That means as
%a symplectic metric space we have (cf. \cite{Yu} Subsection 2.4)
%\[(F/\ker m, m, \mu_{i})\cong V_{0,s;,0,0}.\]

\begin{prop}\label{P:C-3}
If $F$ is a closed abelian subgroup of $G$ satisfying the condition $(\ast)$, then it is an elementary abelian
2-subgroup if and only if $s_0=s$ and $\mu_{i}=\mu_{j}$ for any $1\leq i,j\leq s_0$. If $F$ is a maximal abelian
subgroup of $G$, then $\rank(\ker m/F_0)=\max\{s_0-1,0\}$; if $\rank(\ker m/F_0)=\max\{s_0-1,0\}$ and
$(s_0,s_1)\neq(2,0)$, then $F$ is a maximal abelian subgroup.
%If $s_0\leq 1$, then $F$ is maximal and its conjugacy class is uniquely determined by \[k=\frac{1}{2}\rank(F/\ker m).\]
\end{prop}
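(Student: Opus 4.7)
The plan is to mirror the proof of Proposition \ref{P:BD-3} step by step, replacing orthogonal blocks with symplectic ones. For the first assertion, the ``if'' direction is immediate: when $s_0=s$ and $\mu_1=\cdots=\mu_{s_0}=\mu$, any lift $A=(A_1,\ldots,A_s)\in\Sp(n)$ of $x\in F$ has $A_i^2=\mu(x)I_{2^{k-1}}$ for every $i$, so $A^2=\mu(x)I$ and $x^2=1$ in $G$. Conversely, if $s_1>0$ then $\dim F_0=s_1>0$ by Proposition \ref{P:C-2} and $F$ contains a positive-dimensional torus, hence is not an elementary abelian $2$-group; and if $\mu_i(x)\neq\mu_j(x)$ for some $x$ and some $i\neq j$, then a lift has $A_i^2=I$ but $A_j^2=-I$, so $A^2\notin\langle -I\rangle$ and $x^2\neq 1$.

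For the second assertion, I start from the inclusion
\[
F\subset(\Sp(n)^{\pi^{-1}(B_F)}/\langle -I\rangle)^{F_0}\cong(\Sp(2^{k-1})^{s_0}\times\U(2^{k-1})^{s_1})/\langle(-I,\ldots,-I)\rangle
\]
coming from the blockwise decomposition recalled before Proposition \ref{P:C-4}. The centre
\[
Z=(\{\pm I_{2^{k-1}}\}^{s_0}\times\Z_{2^{k-1}}^{s_1})/\langle(-I,\ldots,-I)\rangle
\]
of the right-hand side centralizes $F$ and lies in $\ker m$; if $F$ is maximal abelian, then $Z\subset F$, and since $Z_0=F_0$ while the finite part $Z/Z_0$ has rank $\max\{s_0-1,0\}$, combining with the upper bound of Proposition \ref{P:C-2} yields $\rank(\ker m/F_0)=\max\{s_0-1,0\}$.

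For the converse, assume the rank equality and $(s_0,s_1)\neq(2,0)$; the case $(s_0,s_1)=(0,1)$ reduces to a single unitary block and is immediate, so I may assume $s\geq 2$ and $s_0\geq 1$. After normalization, $B_F=(\{\pm I_{2^{k-1}}\}^{s_0}\times\{\pm I_{2^k}\}^{s_1})/\langle(-I,\ldots,-I)\rangle$ and $C_G(F_0)=(\Sp(2^{k-1}s_0)\times\U(2^{k-1})^{s_1})/\langle(-I,\ldots,-I)\rangle$. Because $n\neq 2^k$ in this range, $I_{2^{k-1},n-2^{k-1}}$ is not conjugate to $-I_{2^{k-1},n-2^{k-1}}$ in $G$, so conjugation by $B_F$ fully splits the symplectic factor, giving
\[
C_G(\ker m)=(\Sp(2^{k-1})^{s_0}\times\U(2^{k-1})^{s_1})/\langle(-I,\ldots,-I)\rangle.
\]
Using that each $F_i$ is maximal abelian in its block (Lemma \ref{L:C-1}) and $m$ is nondegenerate on $F/\ker m$, taking $F$-fixed points collapses $C_G(F)$ to $\langle F,Z\rangle=F$. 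The main obstacle is the splitting step, which relies on the non-conjugacy $I_{p,n-p}\not\sim -I_{p,n-p}$ in $G$; this is precisely what fails at $(s_0,s_1)=(2,0)$, explaining the exception and requiring a separate ad hoc treatment analogous to the closing remark after Proposition \ref{P:BD-3}.
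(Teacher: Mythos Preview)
Your proof is correct and follows exactly the approach the paper intends---the paper itself gives no detailed proof, saying only that it is ``similar as that of Proposition \ref{P:BD-3}'', and your argument carries out precisely that translation from orthogonal to symplectic blocks. One small indexing slip: in the symplectic setting the blocks with a one-dimensional torus are $\Sp(2^{k})$ (not $\Sp(2^{k-1})$), and the centralizer of the circle $(F_i)_0$ inside such a block is $\U(2^{k})$, not $\U(2^{k-1})$; correspondingly the centers you write as $\Z_{2^{k-1}}^{s_1}$ should be $\Z_{2^{k}}^{s_1}$. This does not affect the argument, since all that matters is that the center of each unitary factor is a connected circle already contained in $F_0$.
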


The proof is similar as that of Proposition \ref{P:BD-3}.

%For the third statement, when $s_0\leq 1$, we have \[\rank(\ker m/F_0)=\max\{s_0-1,0\}=0\] and
%$(s_0,s_1)\neq (2,0)$, so $F$ is maximal by the second statement. Along the way of the proof for
%the second statement one can show that the conjuagacy class of $F$ is determined by $k$.

\smallskip

The description of Weyl groups is also similar as orthogonal groups case. Given a maximal abelian subgroup
$F$ of $G$, by Proposition \ref{P:C-2}, we associate a function $m: F\times F\rightarrow\{\pm{1}\}$, integers
$k$, $s$, $s_0$, $s_1=s-s_0$ with $n=2^{k}s_0+2^{k+1}s_1$ and maps $\mu_1,\mu_2,\dots,\mu_{s_0}: F
\longrightarrow\{\pm{1}\}$. Denote by $F_{k,m,\vec{\mu}}$ a maximal abelian subgroup of $G$ like this, where
$\vec{\mu}$ means the unordered tuple $(\mu_1,\mu_2,\dots,\mu_{s_0})$. For a map $\mu: F\longrightarrow
\{\pm{1}\}$, let $a_{\mu}$ be the number of indices $i$, $1\leq i\leq s_0$ such that $\mu_{i}=\mu$. Denote by
$S_{\vec{\mu}}=\prod_{\mu} S_{a_{\mu}}$.

\begin{prop}\label{P:C-Weyl}
There is an exact sequence \begin{eqnarray*}1&\longrightarrow&\Hom(F/\ker m,B_{F})\rtimes S_{\vec{\mu}}
\longrightarrow W(F_{k,m,\vec{\mu}})\\&\longrightarrow&\Aut(F/\ker m,m,\vec{\mu})\times(\{\pm{1}\}^{s_1}
\rtimes S_{s_1})\\&\longrightarrow& 1.\end{eqnarray*}
\end{prop}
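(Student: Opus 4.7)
The plan is to follow the template of Proposition \ref{P:BD-Weyl} almost verbatim, replacing the orthogonal blocks by symplectic ones and keeping track of the slightly different block sizes $n_i = 2^{k-1}$ (for $1\le i\le s_0$) and $n_i=2^k$ (for $s_0+1\le i\le s$) coming from Proposition \ref{P:C-2} and Lemma \ref{L:C-1}. Write $F=F_{k,m,\vec{\mu}}$ and observe first that any element of $W(F)$ permutes the characters of $F$ on $\pi^{-1}(F)\subset \Sp(n)$, hence stabilizes $\ker m$, the connected component $F_0$, the subgroup $B_F$, and the bimultiplicative form $m$, as well as the collection $\vec{\mu}=(\mu_1,\dots,\mu_{s_0})$ (the latter because $\mu_i$ is read off from the eigenvalues of the $i$-th symplectic block, and $W(F)$ permutes these blocks among those of the same dimension).

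Next I would construct the outer homomorphism
\[
p\colon W(F)\longrightarrow \Aut(F/\ker m,m,\vec{\mu})\times W(F_0),
\]
identify $W(F_0)\cong\{\pm 1\}^{s_1}\rtimes S_{s_1}$ via the standard Weyl group of a maximal torus of a product of copies of $\Sp(1)/\langle -I\rangle$, and show that $p$ is surjective. Surjectivity on the first factor follows by constructing, for each multi-symplectic automorphism of $F/\ker m$, an explicit conjugation inside $\Sp(n)$ that realizes it — this uses the blockwise model of $F$ from Proposition \ref{P:C-4} and the fact that each $F_i$ is a canonical maximal abelian subgroup of $\Sp(n_i)/\langle -I\rangle$, so automorphisms of $(F_i,m_i,\mu)$ are realized by conjugation in $\Sp(n_i)$. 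Surjectivity on the second factor is realized by permuting the last $s_1$ symplectic blocks (giving $S_{s_1}$) and by conjugating in each such block by an element swapping the two eigenvalues of the torus (giving the $\{\pm 1\}^{s_1}$).

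For $\ker p$, introduce the auxiliary homomorphism $p'\colon \ker p\to W(\ker m)$. An element $w\in\ker p$ acts trivially on $F/\ker m$ and on $F_0$, so its only freedom is in permuting the $s_0$ blocks of size $2^{k-1}$ inside $\Sp(n)^{\pi^{-1}(B_F)}\sim\Sp(2^{k-1})^{s_0}\times\Sp(2^k)^{s_1}$; reading off this permutation $\sigma\in S_{s_0}$ defines $p'(w)$. Since $w$ acts trivially on $F/\ker m$, the induced permutation must satisfy $\mu_{\sigma(i)}=\mu_i$ for all $i$, so $\Im p'\subset S_{\vec{\mu}}=\prod_{\mu} S_{a_\mu}$, and conversely every such permutation is realized by a concrete block permutation in $\Sp(n)$. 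For $\ker p'$, an element fixes every block individually, hence commutes with every projection; tracking eigenvalues across the blocks — as in the orthogonal case — shows that such a $w$ is determined by a homomorphism $F/\ker m\to B_F$, and every such homomorphism arises. Splitting $p'$ by the explicit block-permutation section yields
\[
\ker p \;=\; \Hom(F/\ker m,B_F)\rtimes S_{\vec{\mu}},
\]
and combined with the surjectivity of $p$ this gives the claimed exact sequence.

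The main obstacle I anticipate is the eigenvalue-tracking argument that forces $\ker p'\subset \Hom(F/\ker m,B_F)$: one must check that, in the symplectic setting, conjugating within a single block $\Sp(2^{k-1})$ by a centralizer element that fixes $F_i$ pointwise can only rescale by an element of order dividing $2$, i.e.\ by $B_{F_i}$. This is the genuinely symplectic input (as opposed to the orthogonal input in Proposition \ref{P:BD-Weyl}) and relies on the identification $(F_i,m_i,\mu)\cong V_{0,k-1;0,1}$ from Lemma \ref{L:C-1}; once established, the rest of the exact sequence assembles mechanically by the same formal argument as in the orthogonal case.
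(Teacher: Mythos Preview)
Your proposal is correct and follows essentially the same approach as the paper: the paper gives no separate proof of Proposition~\ref{P:C-Weyl} but simply says the description of Weyl groups ``is also similar as orthogonal groups case,'' i.e.\ one adapts the proof of Proposition~\ref{P:BD-Weyl} verbatim with the symplectic block sizes $n_i=2^{k-1}$ and $n_i=2^{k}$ from Proposition~\ref{P:C-2}. Your outline does exactly this, and the anticipated ``obstacle'' about $\ker p'\subset\Hom(F/\ker m,B_F)$ is handled by the same eigenvalue-preservation argument as in the orthogonal case; the only cosmetic slip is the phrase ``maximal torus of a product of copies of $\Sp(1)/\langle-I\rangle$'' for $F_0$---the correct local model per Lemma~\ref{L:C-1} is $\SO(2)/\{\pm I\}$ inside $\O(2)/\langle-I\rangle$---but the identification $W(F_0)\cong\{\pm1\}^{s_1}\rtimes S_{s_1}$ is unaffected.
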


\section{Twisted projective unitary groups}

For an integer $n\geq 2$, let $G=\PU(n)\rtimes\langle\tau_0\rangle$, where $\tau_0=$ complex conjugation.
Then, $\tau_0^{2}=1$ and $\tau_0([A])\tau_0^{-1}=[\overline{A}]$ for any $A\in\U(n)$. One knows that
$\Aut(\mathfrak{su}(n))\cong G$ if $n\geq 3$. Define $\overline{G}=\U(n)\rtimes\langle\tau\rangle$,
$\tau^{2}=1$ and $\tau A\tau^{-1}=\overline{A}$ for any $A\in\U(n)$. Let $\pi: \overline{G}\longrightarrow G$
be the adjoint homomorphism. Then $\ker\pi=\Z_{n}$. Let $\tilde{G}=(\U(n)/\langle-I\rangle)\rtimes\langle\tau
\rangle$ be a group with $\tau^2=1$ and $\tau[A]\tau^{-1}=[\overline{A}]$ for any $A\in\U(n)$. Let
$\pi':\tilde{G}\longrightarrow G$ and $p:\overline{G}\longrightarrow\tilde{G}$ be the natural projections.
Then, $\pi'\circ p=\pi$.

Given an abelian subgroup $F$ of $G$, let $H_{F}=F\cap G_0$.  Define a map $m:H_{F}\times H_{F}\longrightarrow
\U(1)$ by $m(x,y)=\lambda$ if $x=\pi(A)$, $y=\pi(B)$, $A,B\in\U(n)$ and $[A,B]=\lambda I$. It is clear that $m$
is well defined and it is an antisymmetric bimultiplicative function on $H_{F}$. Let
\[\ker m=\{x\in H_{F}: m(x,y)=1,\forall y\in H_{F}\}.\]

\begin{lemma}\label{L:TA-m}
If $F$ is a closed abelian subgroup of $G$ not contained in $G_0$, then $m(x,y)=\pm{1}$ for any $x,y\in H_{F}$,
and $u^{2}\in\ker m$ for any $u\in F-F\cap G_0$.
\end{lemma}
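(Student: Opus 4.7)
My plan is to lift everything to $\overline{G} = \U(n)\rtimes\langle\tau\rangle$ and exploit the fact that conjugation by $\tau$ implements complex conjugation both on $\U(n)$ and on the scalar commutators that define $m$. Fix $u \in F \setminus (F \cap G_0)$ and write $u = [A]\tau_0$ for some $A \in \U(n)$, with lift $A\tau \in \overline{G}$. For arbitrary $x, y \in H_F$, choose lifts $x = [B]$, $y = [C]$ in $\U(n)$, so that $BCB^{-1}C^{-1} = m(x,y)I$ by definition of $m$. Since $F$ is abelian, $u$ commutes with $x$ and $y$ in $G$, which translates at the $\U(n)$-level into the existence of scalars $\mu_B, \mu_C \in \U(1)$ with $A\bar{B}A^{-1} = \mu_B B$ and $A\bar{C}A^{-1} = \mu_C C$.

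For the first assertion, I conjugate the identity $[B, C] = m(x,y)I$ by $A\tau$. Since scalars drop out of commutators, the left-hand side becomes
\[ [A\bar{B}A^{-1}, A\bar{C}A^{-1}] = [\mu_B B, \mu_C C] = [B, C] = m(x,y)I, \]
while the right-hand side becomes $(A\tau)(m(x,y)I)(A\tau)^{-1} = \overline{m(x,y)}I$. Equating yields $m(x,y) = \overline{m(x,y)}$, and combined with $|m(x,y)|=1$ this forces $m(x,y) \in \{\pm 1\}$.

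For the second assertion, the group law in $\overline{G}$ gives $(A\tau)^2 = A(\tau A\tau^{-1})\tau^2 = A\bar{A}$, so $u^2 = [A\bar{A}] \in G_0 \cap F = H_F$. To verify $u^2 \in \ker m$, I pick an arbitrary $y = [C] \in H_F$ and use the commutation $A\bar{C}A^{-1} = \mu_C C$ twice: taking complex conjugates gives $\bar{A}C\bar{A}^{-1} = \overline{\mu_C}\bar{C}$, and composing the two produces
\[ (A\bar{A})C(A\bar{A})^{-1} = A(\overline{\mu_C}\bar{C})A^{-1} = \overline{\mu_C}\mu_C\,C = C, \]
where the last equality uses $|\mu_C|=1$. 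Hence $[A\bar{A}, C] = I$, so $m(u^2, y) = 1$ for all $y \in H_F$, proving $u^2 \in \ker m$.

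The bookkeeping across the successive quotients (the center of $\U(n)$, the $\langle\tau\rangle$-factor, and the map $\overline{G}\to G$) is the only mild source of friction; the substantive content is the single observation that a $\tau_0$-twist inverts scalar commutators via complex conjugation, which both forces $m$ to be real on $H_F$ and, upon squaring a twisted element, produces the combination $A\bar{A}$ whose conjugation action on $C$ has the $\mu_C$-factors cancel.
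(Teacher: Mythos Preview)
Your proof is correct and follows essentially the same approach as the paper's: both lift to $\overline{G}$, use that conjugation by an element of $\tau\U(n)$ inverts scalar matrices (equivalently, complex-conjugates them), and deduce the two claims from this. The only cosmetic difference is that you explicitly decompose the lift of $u$ as $A\tau$ and track the matrix $A\bar{A}$ for $u^2$, whereas the paper works with an undecomposed lift $C\in\tau\U(n)$ and computes $C^2B(C^2)^{-1}=B$ directly; the computations are the same up to relabeling.
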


\begin{proof}
For $x,y\in H_{F}$ and $u\in F-F\cap G_0$, let \[x=\pi(A),\quad y=\pi(B),\quad u=\pi(C),\] $A,B\in\U(n)$,
$C\in\tau\U(n)$. Since $F$ is abelian, \[CAC^{-1}=\lambda_1 A,\ CBC^{-1}=\lambda_2 B,\ ABA^{-1}B^{-1}=\lambda I\]
for some complex numbers $\lambda_1,\lambda_2,\lambda\in\U(1)$. One has \begin{eqnarray*}&&\lambda I=[A,B]=
[\lambda_1 A,\lambda_2 B]\\&=&[CAC^{-1},CBC^{-1}]=C[A,B]C^{-1}\\&=&C(\lambda I)C^{-1}=\lambda^{-1} I.
\end{eqnarray*} Hence $m(x,y)=\lambda=\pm{1}$. On the other hand, \begin{eqnarray*}&&C^2B(C^{2})^{-1}=
C(CBC^{-1})C^{-1}\\&=&C(\lambda_2 B)C^{-1}=C(\lambda_2 I)C^{-1}(CBC^{-1})\\&=&(\lambda_2^{-1}I)(\lambda_2 B)=B,
\end{eqnarray*} so $u^{2}\in\ker m$.
\end{proof}

Choose an $u\in F-F\cap G_0$ and let $u=\pi(C)$, $C\in\tau\U(n)$. For any $A\in\pi^{-1}(\ker m)$, since $F$ is
abelian, $[C,A]=\lambda I$ for some complex number $\lambda\in\U(1)$. As we assume that $A\in\pi^{-1}(\ker m)$,
$\lambda$ does not depend on the choice of $u$ and $C$. Let \[\nu: \pi^{-1}(\ker m)\longrightarrow\U(1)\] be
defined by $\nu(A)=\lambda$. It is a group homomorphism.

\begin{lemma}\label{L:TA-nu}
The map $\pi:\ker\nu/\langle-I\rangle\longrightarrow\ker m$ is an isomorphism.
\end{lemma}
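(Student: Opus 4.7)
The plan is to verify the isomorphism in two bite-sized pieces: compute $\nu$ on the central subgroup $\Z_n$, which will pin down the kernel of $\pi|_{\ker\nu}$; then use divisibility of $\U(1)$ to show surjectivity onto $\ker m$.

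First I would compute $\nu$ restricted to $\Z_n$. Fix once and for all an element $u=\pi(C)\in F\setminus(F\cap G_0)$ with $C=\tau D$, $D\in\U(n)$. For $\lambda I\in\Z_n$, one has $C(\lambda I)C^{-1}=\tau(\lambda I)\tau^{-1}=\overline{\lambda}I$, so
$$[C,\lambda I]=\overline{\lambda}\lambda^{-1}I=\overline{\lambda}^{2}I,$$
using $|\lambda|=1$. Hence $\nu(\lambda I)=\overline{\lambda}^{2}$. In particular $\Z_n\cap\ker\nu=\{\lambda I:\overline{\lambda}^2=1\}=\{\pm I\}=\langle -I\rangle$. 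This shows both that $-I\in\ker\nu$ (so the map $\pi:\ker\nu/\langle -I\rangle\longrightarrow\ker m$ is well defined) and that the kernel of $\pi|_{\ker\nu}$ is exactly $\langle -I\rangle$.

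For surjectivity, take any $x\in\ker m$ and choose a preimage $A_0\in\pi^{-1}(x)\subset\pi^{-1}(\ker m)$. Set $\mu=\nu(A_0)\in\U(1)$. Since $\U(1)$ is divisible we can pick $\lambda\in\U(1)$ with $\lambda^{2}=\mu$. Because $\nu$ is a homomorphism,
$$\nu(\lambda A_0)=\nu(\lambda I)\,\nu(A_0)=\overline{\lambda}^{2}\mu=\overline{\mu}\mu=1.$$
Thus $\lambda A_0\in\ker\nu$ and $\pi(\lambda A_0)=\pi(A_0)=x$, establishing that $\pi|_{\ker\nu}\to\ker m$ is surjective. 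Combining the two pieces, $\pi|_{\ker\nu}$ descends to an isomorphism $\ker\nu/\langle -I\rangle\xrightarrow{\sim}\ker m$.

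There is no substantive obstacle here; the only point requiring a moment's care is the computation of the conjugation action of $C$ on the center, which must give $\overline{\lambda}^{2}$ rather than $\overline{\lambda}$ (the $\lambda^{-1}$ factor in the commutator is essential). All other ingredients — divisibility of $\U(1)$, the homomorphism property of $\nu$, and the fact that $\ker\pi=\Z_n$ — are immediate from earlier setup.
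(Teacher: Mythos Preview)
Your proof is correct and essentially identical to the paper's. Both compute $\nu(\lambda I)=\lambda^{-2}$ (you write it as $\overline{\lambda}^{2}$, same thing on $\U(1)$) to pin down the kernel as $\langle -I\rangle$, and both use that every element of $\U(1)$ is a square to adjust a given preimage $A_0$ by a scalar so that it lands in $\ker\nu$; the only difference is the order of presentation (you do injectivity first, the paper does surjectivity first).
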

\begin{proof}
For any $x\in\ker m$, choosing $A\in\pi^{-1}(x)$, then $[C,A]=\lambda^{2}I$ for some $\lambda\in\U(1)$. By this
one has $[C,\lambda A]=[C,\lambda I][C,A]=\lambda^{-2}\lambda^{2}I=I$. Thus $\lambda A\in\pi^{-1}(x)\cap\ker\nu$
and hence $\pi:\ker\nu/\langle-I\rangle\longrightarrow\ker m$ is surjective. On the other hand, if $\pi([A])=1$,
then $A=\lambda I$ for some $\lambda\in\U(1)$ and $[C,A]=I$. Thus $I=[C,A]=[C,\lambda I]=\lambda^{-2}I$.
Therefore $A=\pm{I}$ and hence $\pi:\ker\nu/\langle-I\rangle\longrightarrow\ker m$ is injective.
\end{proof}

Let $B_{F}=\{A\in\ker\nu: A^{2}=I\}$. Note that, here we define $B_{F}$ as a subgroup of $\overline{G}$, rather
than a subgroup of $G$. The following lemma indicates that $\ker m=\pi(B_{F})F_0$.

\begin{lemma}\label{L:TA-kerm}
If $F$ is a closed abelian subgroup of $G$ satisfying the condition $(\ast)$ and being not contained in $G_0$,
then for any $x\in\ker m$, there exists $y\in F_0$ such that $xy\in\pi(B_{F})$.
\end{lemma}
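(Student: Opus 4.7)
The plan is to imitate the proof of Lemma \ref{L:BD-kerm}: given $x\in\ker m$, I will lift $x$ to a unitary matrix using Lemma \ref{L:TA-nu}, analyze its eigenspace decomposition via the commuting action of $C$, build a one-parameter subgroup that kills off the non-$\pm 1$ eigenvalues, and invoke condition $(\ast)$ to guarantee this one-parameter subgroup projects into $F_0$.

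First I would pick a lift $A\in\ker\nu$ of $x$ via Lemma \ref{L:TA-nu}, and a lift $C=\tau D\in\tau\U(n)$ of some fixed $u\in F\setminus(F\cap G_0)$. Since $A\in\ker\nu$, we have $[C,A]=I$ in $\overline G$, equivalently $DAD^{-1}=\bar A$. Decompose $\bbC^n=V_1\oplus V_{-1}\oplus\bigoplus_{j}(V_{\lambda_j}\oplus V_{\bar\lambda_j})$ into $A$-eigenspaces, with the $\lambda_j\in\U(1)\setminus\{\pm1\}$ chosen so that $\Im\lambda_j>0$. Because $C$ is antilinear and commutes with $A$, for any eigenvector $v$ with $Av=\lambda v$ one has $A(Cv)=C(Av)=\bar\lambda\,Cv$, so $C$ pairs $V_{\lambda_j}\leftrightarrow V_{\bar\lambda_j}$ antilinearly and stabilizes $V_{\pm 1}$. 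Setting $\theta_j=\arg\lambda_j$, I define a one-parameter subgroup $f\colon\bbR\to\U(n)$ by
\[
 f(t)|_{V_{\lambda_j}}=e^{it\theta_j}\,\id,\qquad f(t)|_{V_{\bar\lambda_j}}=e^{-it\theta_j}\,\id,\qquad f(t)|_{V_{\pm 1}}=\id.
\]

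Next I would verify the three commutation properties of $f(t)$. It commutes with $A$ because both are diagonal in the eigenbasis. It commutes with $C$: on $v\in V_{\lambda_j}$, antilinearity of $C$ gives $Cf(t)v=e^{-it\theta_j}Cv=f(t)Cv$ since $Cv\in V_{\bar\lambda_j}$, and similarly on $V_{\bar\lambda_j}$ and $V_{\pm1}$. For any $y\in H_F$ and any lift $B_y\in\U(n)$, $m(x,y)=1$ forces $[A,B_y]=I$, so $B_y$ preserves each $V_\lambda$; hence $B_y$ lies in $\prod_\lambda\U(V_\lambda)$, in whose center $f(t)$ lies, and therefore $[B_y,f(t)]=I$. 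Consequently $\pi(f(t))$ commutes with every element of $H_F\cup\{u\}$, hence with all of $F$. Since $\pi\circ f$ is a continuous path based at the identity, its image lies in $C_G(F)_0$, which by condition $(\ast)$ equals $F_0$. Setting $y=\pi(f(1))^{-1}\in F_0$ and $B=A\,f(1)^{-1}\in\U(n)$, one checks that $B$ centralizes $C$ (so $B\in\ker\nu$) and that $B$ acts as $\pm\id$ on $V_{\pm1}$ and as $\id$ on each $V_{\lambda_j}\oplus V_{\bar\lambda_j}$, giving $B^2=I$; thus $B\in B_F$ and $xy=\pi(B)\in\pi(B_F)$.

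The main obstacle is arranging that the modifier $f(t)$ actually lives in the centralizer of the whole group $F$, not merely of the chosen pair $\{x,u\}$. The trick is that compatibility with $C$ forces $f(t)$ to rotate conjugate eigenspaces at opposite rates, while $x\in\ker m$ is exactly what makes lifts $B_y$ preserve the eigenspaces of $A$ individually; the two facts together make $f(t)$ scalar on each $A$-eigenspace and centralized by every lift of $F$. Once this is in place, condition $(\ast)$ promotes $\pi(f(1))$ from merely an element of $C_G(F)_0$ to an element of $F_0$, closing the argument.
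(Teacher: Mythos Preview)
Your proof is correct and follows essentially the same strategy as the paper's: lift $x$ to $A\in\ker\nu$, exploit that $[C,A]=I$ pairs conjugate eigenspaces, build the one-parameter subgroup rotating the non-real eigenspace pairs, and invoke condition $(\ast)$ to force its projection into $F_0$. The only cosmetic difference is that the paper first conjugates $A$ into the explicit real block form $\diag\{-I_p,I_q,A_{n_1}(\theta_1),\dots,A_{n_s}(\theta_s)\}$ and then reads off the central torus $Z'_{n_1}\times\cdots\times Z'_{n_s}$ of $\overline{G}^{A}$, whereas you work intrinsically with the eigenspace decomposition and verify the commutation with $C$ and with all lifts $B_y$ by hand; the resulting element $y$ is the same in both arguments.
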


\begin{proof}
Choose an $A\in\ker\nu\cap\pi^{-1}(x)$. Since $A\in\ker\nu$, one has $[C,A]=I$. Hence $A$ and $\tau A\tau^{-1}=
\overline{A}$ are similar matrices. By this the multiplicity of an eigenvalue $\lambda$ of $A$ is equal to the
multiplicity of the eigenvalue $\overline{\lambda}$ of $A$. We may assume that \[A=\diag\{-I_{p},I_{q},
A_{n_1}(\theta_1),\dots,A_{n_{s}}(\theta_{s})\},\] where $0<\theta_1<\cdots<\theta_{s}<\pi$, $p+q+2(n_1+\cdots
+n_{s})=n$, \[A_{k}(\theta)=\left(\begin{array}{cc}\cos(\theta) I_{k}&\sin(\theta) I_{k}\\-\sin(\theta) I_{k}
&\cos(\theta) I_{k}\\\end{array}\right).\] Since $A\in\ker\nu$, one has \[\pi^{-1}(F)\subset\overline{G}^{A}=
(\U(p)\times\U(q)\times L_1\times\cdots\times L_{s})\rtimes\tau,\] where \[L_{i}=\{\left(\begin{array}{cc}X&Y
\\-Y&X\end{array}\right)\in\U(2n_{i}): X,Y\in\M(n_{i},\bbC)\},\] $1\leq i\leq s$. It holds that
$(\overline{G}^{A})_0\supset 1\times 1\times Z'_{n_1}\times\cdots\times Z'_{n_{s}}$, where \[Z'_{n_{i}}=
\{\left(\begin{array}{cc}\cos(\theta)I_{n_{i}}&\sin(\theta)I_{n_{i}}\\-\sin(\theta)I_{n_{i}}&
\cos(\theta)I_{n_{i}}\end{array}\right): 0\leq\theta\leq 2\pi\}.\] Since $F$ satisfies the condition $(\ast)$,
one has $\pi((\overline{G}^{A})_0)\subset F_0$. Let \[y=\pi(\diag\{I_{p},I_{q}, A_{n_1}(-\theta_1),\dots,
A_{n_{s}}(-\theta_{s})\}).\] Then $y\in F_0$ and $xy\in\pi(B_{F})$.
\end{proof}

The following lemma is well known, a proof of it could be found in \cite{Brocker-Dieck}, Page 177.

\begin{lemma}\label{L:MCC}
Let $G$ be a compact (not necessarily connected) Lie group and $x\in G$ be an element. If $T$ is a maximal
torus of $(G^{x})_0$, then any other element $y$ of $xG_0$ is conjugate to an element in $xT$ and any maximal
torus of $(G^{y})_0$ is conjugate to $T$.
\end{lemma}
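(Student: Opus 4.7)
My plan is to study the smooth conjugation map
\[
\Phi\colon G_0/T \times xT \longrightarrow xG_0, \qquad (gT, h) \longmapsto ghg^{-1},
\]
which is well-defined because $T \subset G^h$ for every $h \in xT$ (so $\Phi(gt,h)=\Phi(g,h)$ for $t\in T$). Both sides are compact orientable manifolds of dimension $\dim G_0$, so showing that $\Phi$ is surjective will yield part (i), from which part (ii) follows by a short separate argument.

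To prove surjectivity, I would first compute the differential at $(eT, h)$. After identifying $T_h(xG_0)$ with $\Lie(G_0)$ via right translation by $h^{-1}$, the differential is $d\Phi(X, Y) = (\id - \Ad(h))X + Y$ for $X \in \Lie(G_0)/\Lie(T)$ and $Y \in \Lie(T)$. Since $\Lie(T) \subset \Lie(G_0)^{\Ad(h)}$ and these two pieces are orthogonal complements for any $\Ad(G)$-invariant inner product, the image has dimension $\dim G_0 - \dim(G^h)_0 + \dim T$, surjecting onto $\Lie(G_0)$ precisely when $(G^h)_0 = T$. To produce such $h$, write $h = xt$ and decompose $\Lie(G)_{\bbC}$ into $\Ad(T)$-weight spaces. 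Because $x$ centralizes $T$, $\Ad(x)$ preserves each weight space. On the zero-weight part (the complexified centralizer of $T$), $\Ad(xt) = \Ad(x)$ has fixed subspace $\Lie(T)_{\bbC}$ by maximality of $T$ in $(G^x)_0$. On each nonzero weight space $\frg_\alpha$, $\Ad(xt)$ acts as $\alpha(t)\Ad(x)|_{\frg_\alpha}$, which has no fixed vectors once $t$ avoids the finitely many proper closed subtori of $T$ where $\alpha(t)^{-1}$ coincides with an eigenvalue of $\Ad(x)|_{\frg_\alpha}$. Thus a dense open $(xT)^{\mathrm{reg}} \subset xT$ satisfies $(G^h)_0 = T$, and $\Phi$ is a local diffeomorphism at each such $(eT, h)$.

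Since $\Phi$ is a proper map between oriented equidimensional manifolds that is a local diffeomorphism somewhere, it has a well-defined integer degree. Counting preimages of a regular $h_* \in (xT)^{\mathrm{reg}}$: if $gh'g^{-1} = h_*$ with $h' \in xT$, then $T \subset (G^{h'})_0 = g^{-1}(G^{h_*})_0 g = g^{-1}Tg$, forcing $g \in N_{G_0}(T)$. Hence the preimages are indexed by the nonempty finite set $W_x := \{w \in N_{G_0}(T)/T : w^{-1}xw \in xT\}$, and an orientation check parallel to the classical Weyl integration formula (the $x=e$ case) shows that all local degrees have the same sign, so $\deg\Phi = |W_x| \geq 1$. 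Hence $\Phi$ is surjective, proving (i).

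For (ii), given $y \in xG_0$ and a maximal torus $T'$ of $(G^y)_0$, use (i) to find $g \in G_0$ with $gyg^{-1} = h \in xT$, so that $gT'g^{-1}$ is a maximal torus of $(G^h)_0$. The key observation is that $T$ is also a maximal torus of $(G^h)_0$: if a torus $S \supset T$ lies in $(G^h)_0$, then $S$ centralizes both $h = xt$ and $T \ni t$, hence $x$, so $S \subset (G^x)_0$ by connectedness, and maximality of $T$ there forces $S = T$. Since all maximal tori of the compact connected group $(G^h)_0$ are mutually conjugate, $gT'g^{-1}$ and $T$ are conjugate in $(G^h)_0 \subset G_0$, so $T'$ is $G_0$-conjugate to $T$. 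The main obstacle is the surjectivity of $\Phi$: the differential argument only gives openness at regular points of $xT$, and promoting this to global surjectivity requires the degree-theoretic input verifying that the local degrees at all preimages share the same sign.
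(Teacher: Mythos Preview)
The paper does not supply its own proof of this lemma; it simply states that the result is well known and refers to Br\"ocker--tom Dieck, page 177. Your argument is precisely the one found there: form the conjugation map $G_0/T\times xT\to xG_0$, show it is a local diffeomorphism at regular elements of $xT$ by the differential computation, and deduce surjectivity via a degree argument analogous to the Weyl integration formula. So your approach coincides with the reference the paper invokes.

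Two small remarks on the write-up. First, the bad loci in $T$ where $\alpha(t)^{-1}$ hits an eigenvalue of $\Ad(x)|_{\frg_\alpha}$ are cosets of $\ker\alpha$, hence proper closed subsets but not literally subtori; the conclusion (dense open regular set) is unaffected. Second, you correctly identify the sign computation for the local degrees as the genuine content; in Br\"ocker--tom Dieck this is handled by an explicit Jacobian calculation showing $\det d\Phi_{(gT,h)}=\det\bigl((\id-\Ad(h))|_{\Lie(G_0)/\Lie(T)}\bigr)$, which is positive at every regular $h$ because the nonreal eigenvalues of $\Ad(h)$ pair into complex conjugates. With that filled in, the argument is complete and matches the cited source.
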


For an even integer $n$, let $H_2=\langle[I_{n/2,n/2}],[J'_{n/2}]\rangle\subset\PU(n)$. Then,
\[((\U(n)/\Z_{n})\rtimes\langle\tau_0\rangle)^{H_2}=(\Delta(\U(n/2)/\Z_{n/2})\rtimes\langle\tau_0\rangle)
\times H_2,\] where $\Delta(A)=\diag\{A,A\}\in\U(2m)$ for any $A\in\U(m)$.

\begin{lemma}\label{L:TA-1}
Let $F$ be a closed abelian subgroup of $G$ satisfying the condition $(\ast)$ and being not contained in $G_0$.
If $B_{F}=\{\pm{I}\}$, then there exists $k\in\bbZ_{\geq 0}$ such that $F=(H_2)^{k}\times\Delta^{k}(F')$,
where $F'$ is a maximal torus of $(\U(n')/\Z_{n'})\rtimes \langle\tau_0\rangle$ and $n'=\frac{n}{2^{k}}=1$ or
$2$. %Moreover $F'=\langle\tau_0\rangle$ if $n'=1$; and $F'=(\SO(2)/\langle-I\rangle)\times\langle\tau_0\rangle$ if $n'=2$.
\end{lemma}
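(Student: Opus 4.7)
The approach mirrors Lemma \ref{L:BD-1}, reducing to an elementary abelian piece plus a small twisted residual, with the new difficulty being the handling of the $\tau_0$-component. Since $B_{F}=\{\pm{I}\}$ we have $\pi(B_{F})=1$, so Lemma \ref{L:TA-kerm} gives $\ker m=F_0$ and the induced form on $H_{F}/F_0$ is nondegenerate. By Lemma \ref{L:TA-m}, $m$ is $\{\pm{1}\}$-valued, hence $x^{2}\in F_0$ for every $x\in H_{F}$. Using the divisibility of $F_0$ to pick order-$2$ lifts of a symplectic basis, one obtains a finite complement $F_1\subset H_{F}$ with $H_{F}=F_1\times F_0$ and $F_1\cong(\mathbb{Z}/2)^{2k}$ carrying a nondegenerate $\{\pm{1}\}$-valued pairing.

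Since the pairing on $F_1$ is nondegenerate, $F_1$ has a unique faithful irreducible projective representation, of dimension $2^{k}$; hence up to conjugation in $\PU(n)$ we have $F_1=(H_2)^{k}$ acting on $\mathbb{C}^{n}=\mathbb{C}^{2^{k}}\otimes\mathbb{C}^{n'}$ via its standard representation tensored with the trivial one, where $n=2^{k}n'$. Iterating the centralizer formula for $G^{H_2}$ displayed just before the lemma gives
\[G^{F_1}=(\Delta^{k}(\U(n')/\Z_{n'})\rtimes\langle\tau_0\rangle)\times(H_2)^{k}.\]
For $u\in F-G_0$, its projection to $(H_2)^{k}$ lies in $F$, so replacing $u$ by its other factor we may assume $u\in\tilde{G}':=\Delta^{k}(\U(n')/\Z_{n'})\rtimes\langle\tau_0\rangle$ with $u\notin(\tilde{G}')_0$; then $F=(H_2)^{k}\times F'$ with $F'=F_0\cdot\langle u\rangle$. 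Since $F_1$ is discrete, $\mathfrak{g}_0^{F_1}=\Delta^{k}(\mathfrak{su}(n'))$, so $\mathfrak{g}_0^{F}=\Delta^{k}(\mathfrak{su}(n'))^{F'}$; combined with $\dim F=\dim F'$, this is exactly $(\ast)$ for $F'$ in $\tilde{G}'$. In particular $F_0$ is a maximal torus of the identity component of the centralizer of $u$ in $\tilde{G}'_0$.

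The main obstacle is forcing $n'\leq 2$. After modifying $u$ by an element of $F_0$ we arrange $u^{2}=1$ in $G$. By the classification of order-$2$ cosets in $\PU(n')\rtimes\langle\tau_0\rangle$ (equivalently, of compact real forms of $\mathfrak{sl}(n',\mathbb{C})$), $u$ is conjugate to either $\tau_0$, whose $\PU(n')$-centralizer is $\PO(n')$, or (for $n'$ even) to $[J_{n'/2}]\tau_0$ with centralizer $\PSp(n'/2)$. Suppose $n'\geq 3$. The identity component of the appropriate centralizer then has a non-central order-$2$ element $y$ in its maximal torus, which contains $F_0$: in the first case take $y=[I_{p,n'-p}]$ for suitable $p$ with $0<p<n'$; in the second take a non-scalar order-$2$ element of the standard maximal torus of $\PSp(n'/2)$. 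In each case the natural real (respectively symplectic) lift of $y$ squares to $I$ and commutes with a fixed lift $C$ of $u$, hence lies in $\ker\nu$; applying $\Delta^{k}$ produces an element of $B_{F}$ whose image in $\PU(n)$ is not $\pm{I}$, contradicting $B_{F}=\{\pm{I}\}$. Therefore $n'\in\{1,2\}$, and $F=(H_2)^{k}\times\Delta^{k}(F')$ with $F'=F_0\cdot\langle u\rangle$ the asserted maximal torus of $\tilde{G}'$.
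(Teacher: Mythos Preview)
Your argument is correct and follows the paper's overall plan: use $B_F=\{\pm I\}$ and Lemma~\ref{L:TA-kerm} to get $\ker m=F_0$, split off an elementary abelian $2$-group $F_1\sim(H_2)^k$ inside $H_F$, compute its centralizer, and then force $n'=n/2^k\le 2$. The one substantive difference is in that last step. You normalize $u$ to an involution, invoke the classification of outer involutions of $\PU(n')$ (orthogonal versus symplectic type---these correspond to the \emph{non-compact} real forms $\mathfrak{sl}(n',\bbR)$ and $\mathfrak{sl}(n'/2,\bbH)$, not compact ones), and in each case exhibit an explicit order-$2$ element of the maximal torus $F_0$ whose real (respectively symplectic) lift lies in $B_F\setminus\{\pm I\}$ once $n'\ge 3$. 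The paper instead appeals to Lemma~\ref{L:MCC}: since $\tau_0$ and any $\tau'\in F-H_F$ lie in the same $G_0$-coset, every maximal torus of $(G_0)^{\tau'}$---in particular $F_0$---is $G$-conjugate to a maximal torus of $\SO(n')$ (or $\SO(n')/\langle-I\rangle$), so the orthogonal argument of Lemma~\ref{L:BD-1} applies directly with no case split and no need to arrange $u^2=1$. Your route is more hands-on and self-contained; the paper's is shorter and uniform, at the cost of importing the Cartan-type conjugacy lemma for disconnected groups. One small wording issue: you write that the centralizer's maximal torus ``contains $F_0$'', but in fact $F_0$ \emph{is} such a maximal torus (by condition~$(\ast)$), which is what you actually use.
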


\begin{proof}
Since $B_{F}=\{\pm{I}\}$, one has $\ker m=F_0$ by Lemma \ref{L:TA-kerm}. Then, the induced function $m$ on
$H_{F}/F_0$ is nondegenerate. Let $2k=\rank(H_{F}/\ker m)$. Choosing a finite subgroup $F'$ of $H_{F}$ such that
$H_{F}=F'\times F_0$, then the function $m$ on $F'$ is nondegenerate. Thus $F'$ is an elementary abelian 2-group
by Lemma \ref{L:TA-m}. By \cite{Yu}, Proposition 2.4, any non-identity element of $F'$ is conjugate to
$\pi(I_{\frac{n}{2},\frac{n}{2}})$ and the conjugacy class of $F'$ is uniquely determined by $k=\frac{1}{2}
\rank F'$. Hence $F'\sim (H_2)^{k}$. Substituting $F$ by a subgroup conjugate to it if necessary, we may assume
that \[C_{G}(F')=F'\times\Delta^{k}((\U(n')/\Z_{n'})\rtimes\langle\tau_0\rangle),\] where $n'=\frac{n}{2^{k}}$.
Then, there exists $\tau'\in F-H_{F}$ such that \[\langle F_0,\tau'\rangle\subset(\U(n')/\langle\Z_{n'}\rangle)
\rtimes\langle\tau_0\rangle.\] Since $F$ satisfies the condition $(\ast)$, $F_0$ is a maximal torus of
$(\U(n')/\Z_{n'})^{\tau'}.$ By Proposition \ref{L:MCC}, $F_0$ is conjugate to a maximal torus of $\SO(n')$ (if
$2\nmid n$) or $\SO(n')/\langle-I\rangle$ (if $2|n'$). As it is assumed that $B_{F}=\{\pm{I}\}$, one has $n'=1$
or $2$. We reach the conclusion of the proposition.
\end{proof}

\begin{lemma}\label{L:TA-2}
If $F$ is an abelian subgroup of $G$ not contained in $G_0$, then there exists an abelian subgroup $F'$ of
$\tilde{G}$ such that $\pi'(F')=F$ and $F'\cap(\Z_{n}/\langle-I\rangle)=\langle iI\rangle/\langle-I\rangle$.
Given $F$, $F'$ is determined up to conjugation by an element in $\Z_{n}/\langle-I\rangle$.
\end{lemma}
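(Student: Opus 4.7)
The plan is to build $F'$ using a character on $\tilde H := (\pi')^{-1}(H_F)$ induced by conjugation. Fix $u\in F-H_F$ and any lift $\tilde u_0\in\tilde G$ of it. Since $F$ is abelian, $\tilde u_0\tilde x\tilde u_0^{-1}$ lifts $\pi'(\tilde x)$ for every $\tilde x\in\tilde H$, so $\tilde u_0\tilde x\tilde u_0^{-1}=c(\tilde x)\tilde x$ with $c(\tilde x)\in\ker\pi'=\Z_n/\langle-I\rangle$. Because $\tilde H$ is abelian (by Lemma \ref{L:TA-m}, commutators of lifts lie in $\langle-I\rangle$) and $\ker\pi'$ is central in $\U(n)/\langle-I\rangle$, the map $c:\tilde H\to\ker\pi'$ is a continuous character, and it is moreover independent of the particular lift of $u$ chosen. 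Writing $\tilde u_0=[C]\tau$ and using $\tau[\lambda I]\tau^{-1}=[\bar\lambda I]$ gives the explicit formula $c([\lambda I])=[\lambda^{-2}I]$; consequently $c|_{\ker\pi'}$ is surjective with kernel exactly $\{[I],[iI]\}=\langle iI\rangle/\langle-I\rangle$.

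For existence, set $S:=\ker c=C_{\tilde H}(\tilde u_0)$. Then $S\cap\ker\pi'=\langle iI\rangle/\langle-I\rangle$ by the computation above, while $\pi'(S)=H_F$ because any $\tilde x\in\tilde H$ can be corrected by a central $\lambda\in\ker\pi'$ (found via surjectivity of $c|_{\ker\pi'}$) so that $\tilde x\lambda^{-1}\in S$. Since $\tilde u_0^2\in\tilde H$ trivially centralizes $\tilde u_0$, we have $\tilde u_0^2\in S$, and therefore $F':=S\cup S\tilde u_0$ is an abelian subgroup of $\tilde G$ with $\pi'(F')=H_F\cup uH_F=F$ and $F'\cap\ker\pi'=\langle iI\rangle/\langle-I\rangle$.

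For uniqueness, let $F'_1,F'_2$ be two lifts satisfying the conclusions and pick $\tilde u_i\in F'_i$ both lifting the same $u$, so that $\tilde u_2=\mu\tilde u_1$ for some $\mu\in\ker\pi'$. Since $c$ is intrinsic to $u$, each $F'_i\cap\tilde H\subset C_{\tilde H}(\tilde u_i)=\ker c=S$; as each $F'_i\cap\tilde H$ is a degree-two cover of $H_F$ with $\ker\pi'$-intersection $\langle iI\rangle/\langle-I\rangle$, and $S$ has the same description, we conclude $F'_1\cap\tilde H=F'_2\cap\tilde H=S$. A direct computation gives $\lambda\tilde u_1\lambda^{-1}=[\lambda^2I]\tilde u_1$ for $\lambda\in\ker\pi'$; since squaring on $\U(1)$ is surjective, we can choose $\lambda$ with $[\lambda^2I]=\mu$, and then $\lambda F'_1\lambda^{-1}=F'_2$.

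The main delicate point is the explicit formula $c([\lambda I])=[\lambda^{-2}I]$ together with the observation that its kernel on $\ker\pi'$ is precisely the subgroup $\langle iI\rangle/\langle-I\rangle$ specified in the statement: it is this particular coincidence that dictates the chosen normalization of the intersection and allows the construction to close up with exactly the prescribed subgroup rather than some other order-two subgroup of the circle $\ker\pi'$.
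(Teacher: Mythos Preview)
Your proof is correct and follows essentially the same approach as the paper: both construct $F'$ as $\langle C_{\tilde H}(\tilde u_0),\tilde u_0\rangle$ for a chosen lift $\tilde u_0$ of an element of $F\setminus H_F$, verify $\pi'(F')=F$ via the observation that $\tilde u_0[\lambda I]\tilde u_0^{-1}=[\lambda^{-1}I]$ allows every element of $\tilde H$ to be centrally corrected into $S$, compute the intersection with $\ker\pi'$ from the kernel of $[\lambda I]\mapsto[\lambda^{-2}I]$, and prove uniqueness by showing $F'\cap\tilde H$ is forced to equal $S$ while the outer coset representative can be moved by conjugation with a suitable $[\lambda I]$. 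Your packaging of the conjugation action as an explicit character $c:\tilde H\to\ker\pi'$ is a clean way to organize these computations, but the underlying argument is the same as the paper's.
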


\begin{proof}
By Lemma \ref{L:TA-kerm}, $m(x,y)=\pm{1}$ for any $x,y\in H_{F}$. Hence $\pi'^{-1}(H_{F})$ is an abelian
subgroup of $\U(n)/\langle-I\rangle$. Choose any $u\in\pi'^{-1}(F-H_{F})$ and let\[F'=\langle
(\pi'^{-1}(H_{F}))^{u},u\rangle\subset(\U(n)/\langle-I\rangle)\rtimes\langle\tau\rangle.\] For any
$x'\in\pi'^{-1}(H_{F})$, since $F$ is abelian, one has $ux'u^{-1}x'^{-1}=[\lambda^{2}I]$ for some $\lambda\in
\U(1)$. Then $u(\lambda x')u^{-1}=\lambda x'$ and hence $\lambda x'\in F'$. Therefore $\pi(F')=F$. If
$[\lambda I]\in F'\cap(\Z_{n}/\langle-I\rangle)$, then $[\lambda I]=u[\lambda I]u^{-1}=[\lambda^{-1}I]$. Thus
$[\lambda I]=1$ or $[iI]$ and hence $F'\cap(\Z_{n}/\langle-I\rangle)=\langle iI\rangle/\langle-I\rangle$. For
a subgroup $F'$ of $(\U(n)/\langle-I\rangle)\rtimes\langle\tau\rangle$ satisfying the conditions in the
proposition, choose $x\in F-H_{F}$ and $u\in F'\cap \pi'^{-1}(x)$. Since $[\lambda I]u[\lambda I]^{-1}=
[\lambda^{2} I]u$, the conjugacy class of $u$ is determined up to conjugation by an element in
$\Z_{n}/\langle-I\rangle$. Fixing $u$, for any $y\in H_{F}$ and $y'\in\pi'^{-1}(y)$, if $y'$ and $[\lambda I]y'$
both commute with $u$, then \begin{eqnarray*}&&1\\&=&u([\lambda I]y')u^{-1}([\lambda I]y')^{-1}\\&=&
(u[\lambda I]u^{-1}[\lambda I]^{-1})[\lambda I](uy'u^{-1}y'^{-1})[\lambda I]^{-1}\\&=&[\lambda^{-2} I]
[\lambda I][\lambda I]^{-1}\\&=&[\lambda^{-2} I].\end{eqnarray*} Hence $[\lambda I]=1$ or $[iI]$. As
$[iI]\in F'$, $F'$ is determined by $u$. Therefore the conjugacy class of $F'$ is determined up to conjugation
by an element in $\Z_{n}/\langle-I\rangle$.
\end{proof}

In this correspondence, one can show that $F$ satisfies the condition $(\ast)$ if and only if $F'$ satisfies it;
$F$ is a maximal abelian subgroup if and only if $F'$ is. Denote by $H_{F'}=F'\cap\tilde{G}$.

Given a closed abelian subgroup $F$ of $G$ satisfying the condition $(\ast)$ and being not contained in
$G_0$, by Lemma \ref{L:TA-2}, $F$ lifts to an abelian subgroup $F'$ of $\tilde{G}=(\U(n)/\langle-I\rangle)
\rtimes\langle\tau\rangle$ satisfying the condition $(\ast)$ and being not contained in $\tilde{G}_0$. We
define an antisymmetric bimultiplicative function $m': H_{F'}\times H_{F'}\longrightarrow\{\pm{1}\}$ and
a homomorphism $\nu':\ker m'\longrightarrow\{\pm{1}\}$ by $[A,B]=m'(x,y)I$, $[C,A']=\nu'(x')I$ for
$x=[A],y=[B]\in H_{F'}$, $x'=[A']\in\ker m'$, $u=[C]\in F'-H_{F'}$. Note that $\nu'([iI])=-1$. Hence
$$\ker m'=\ker\nu'\times\langle[iI]\rangle.$$ Let
$B_{F'}=\{x\in\ker\nu': A^2=I, \forall A\in p^{-1}(x)\}$. Similarly as Lemma \ref{L:TA-kerm}, one can show
that $$\ker\nu'=B_{F'}F_0.$$ It is clear that $B_{F'}=p(B_{F})$. The following lemma is analogous to Lemma
\ref{L:TA-1}, which follows from Lemmas \ref{L:TA-1} and \ref{L:TA-2}.

\begin{lemma}\label{L:TA-3}
Let $F'$ be a closed abelian subgroup of $\tilde{G}$ satisfying the condition $(\ast)$ and being not contained
in $\tilde{G}_0$. If $B_{F'}=1$, then there exists $k\in\bbZ_{\geq 0}$ such that $F=(H_2)^{k}\times\langle[iI]
\rangle\times\Delta^{k}(F')$, where $n'=\frac{n}{2^{k}}=1$ or $2$, $F'=\SO(2)/\{\pm{I}\}\times\langle\tau_0
\rangle$ if $n'=2$, and $F'=\langle\tau_0\rangle$ if $n'=1$.
\end{lemma}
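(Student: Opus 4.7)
The plan is to descend to $G$ via $\pi'$, apply Lemma \ref{L:TA-1} there, then lift the resulting decomposition back using Lemma \ref{L:TA-2}.

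First, I set $F := \pi'(F')$, a closed abelian subgroup of $G$ not contained in $G_0$ that satisfies $(\ast)$ (since that property descends along surjective homomorphisms, as noted in the introduction). To invoke Lemma \ref{L:TA-1} I need $B_F=\{\pm I\}$. The inclusion $\{\pm I\}\subset B_F$ is automatic: $\pm I$ centralize any $C\in\tau\U(n)$ (hence lie in $\ker\nu$) and square to $I$. Conversely, since $B_{F'}=p(B_F)$ and $\ker p=\langle -I\rangle$, the hypothesis $B_{F'}=1$ forces $B_F\subset\langle -I\rangle$. Hence $B_F=\{\pm I\}$, and Lemma \ref{L:TA-1} yields $F=(H_2)^{k}\times\Delta^{k}(F_0'')$ with $F_0''$ a maximal torus of $(\U(n')/\Z_{n'})\rtimes\langle\tau_0\rangle$ and $n'=n/2^{k}\in\{1,2\}$.

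Second, I lift this decomposition to $\tilde G$. The generators $[I_{n/2,n/2}]$ and $[J'_{n/2}]$ of each $H_2$-factor already lie in $\U(n)/\langle -I\rangle\subset\tilde G$ and commute there, since $I_{n/2,n/2}\,J'_{n/2}=-J'_{n/2}\,I_{n/2,n/2}$ in $\U(n)$; moreover $(J'_{n/2})^{2}=-I\equiv I$ in $\tilde G$, so the lifted $(H_2)^{k}$ is again elementary abelian of rank $2k$. The torus $F_0''$ admits a canonical lift to $(\U(n')/\langle -I\rangle)\rtimes\langle\tau\rangle$: namely $\langle\tau\rangle$ if $n'=1$, and $(\SO(2)/\langle -I\rangle)\times\langle\tau\rangle$ if $n'=2$ (using that $\tau$ centralizes $\SO(2)\subset\U(2)$). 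Adjoining the $\tau$-invariant central element $[iI]$ produces
\[ F^{*}:=(H_2)^{k}\times\langle[iI]\rangle\times\Delta^{k}(F_0''), \]
an abelian subgroup of $\tilde G$ satisfying $\pi'(F^{*})=F$ and $F^{*}\cap(\Z_{n}/\langle -I\rangle)=\langle iI\rangle/\langle -I\rangle$. By the uniqueness clause of Lemma \ref{L:TA-2}, $F'$ is conjugate to $F^{*}$ by an element of $\Z_{n}/\langle -I\rangle$, yielding the stated decomposition.

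The main obstacle is the last step: verifying that the three factors of $F^{*}$ pairwise commute in $\tilde G$ and that $F^{*}\cap(\Z_{n}/\langle -I\rangle)=\langle iI\rangle/\langle -I\rangle$ exactly. Commutativity follows from the tensor-product block structure, in which the $(H_2)^{k}$ copies act on disjoint tensor slots from $\Delta^{k}(F_0'')$, together with the centrality of $[iI]$ in $\tilde G$. The intersection computation uses that the lift of $(H_2)^{k}$ has exponent $2$ in $\tilde G$ so contributes no new scalars beyond $\{1\}$, and that the torus portion of $\Delta^{k}(F_0'')$ contains no scalars modulo $\langle -I\rangle$ beyond those carried by the adjoined $\langle[iI]\rangle$.
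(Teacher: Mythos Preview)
Your approach---descend to $G$ via $\pi'$, apply Lemma~\ref{L:TA-1}, then lift back using the uniqueness clause of Lemma~\ref{L:TA-2}---is exactly what the paper indicates (its entire proof is the sentence that the lemma ``follows from Lemmas~\ref{L:TA-1} and~\ref{L:TA-2}''), and you have filled in the details correctly.

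Two small remarks. First, $(J'_{n/2})^{2}=I$ already in $\U(n)$, not $-I$; you may be thinking of $J_{n/2}$, but this does not affect your argument. Second, to invoke the uniqueness in Lemma~\ref{L:TA-2} you need the \emph{given} $F'$ to satisfy $F'\cap(\Z_{n}/\langle-I\rangle)=\langle iI\rangle/\langle-I\rangle$: the inclusion $\subset$ follows from commutativity with any $u\in F'-H_{F'}$, while $[iI]\in F'$ is part of the paper's standing convention (immediately preceding this lemma) that $F'$ denotes a lift produced by Lemma~\ref{L:TA-2}.
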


Without loss of generality, we may assume that \[(\overline{G})^{B_{F}}=(\U(n_1)\times\cdots\U(n_{s}))\rtimes
\langle\tau\rangle\] for some positive integers $n_1,\dots,n_{s}$ with $n_1+\cdots+n_{s}=n$. Let $F'_{i}$ be
the image of the projection of $F'$ to $(\U(n_{i})/\langle-I\rangle)\rtimes\langle\tau\rangle$ and
$p_{i}: F'\rightarrow(\U(n_{i})/\langle-I\rangle)\rtimes\langle\tau\rangle$ be the projection map. One can show
that $B_{F'_{i}}=1$ for each $i$, $1\leq i\leq s$. Let $k=\frac{1}{2}\rank(H_{F'}/\ker m')$. By Lemma
\ref{L:TA-3}, $n_{i}=2^{k}$ or $2^{k+1}$. We may assume that $n_1=n_2=\cdots=n_{s_0}=2^{k}$, $n_{s_0+1}=
\cdots=n_{s}=2^{k+1}$. For $1\leq i\leq s_0$, define $\mu_{i}: H_{F'}/\ker\nu'\longrightarrow\{\pm{1}\}$ by
$\mu_{i}(x)=\mu(p_{i}(x))$. That is, for $x=[(A_1,A_2,\dots,A_{s})]\in H_{F'}$, one has $A_{i}^{2}=\mu_{i}(x)I$
for $1\leq i\leq s_0$. As $p_{i}(\ker\nu')=1$, $\mu_{i}$ is well defined. Moreover $p_{i}: H_{F'}/\ker\nu'
\longrightarrow H_{F'_{i}}$ is an isomorphism transferring $(m,\mu_{i})$ to $(m_{i},\mu)$. In this way, we
define a linear structure $(m,\mu_1,\cdots,\mu_{s_0})$ on $H_{F'}/\ker\nu'$. Note that, each $\mu_{i}$ is
compatible with $m'$, i.e., \[m'(x,y)=\mu_{i}(x)\mu_{i}(y)\mu_{i}(xy)\] for any $x,y\in H_{F'}$. By Lemma
\ref{L:TA-3}, as a symplectic metric space one has (cf. \cite{Yu}, Subsection 2.4)
\[(H_{F'}/\ker\nu',m,''\mu_{i})\cong V_{0,k;1,0}\] for any $i$, $1\leq i\leq s_0$.

\begin{prop}\label{P:TA-5}
The conjugacy class of a closed abelian subgroup $F'$ of $\tilde{G}$ satisfying the condition $(\ast)$ is
determined by the integer $k=\frac{1}{2}\rank(H_{F'}/\ker m')$, the conjugacy class of the subgroup
$B_{F'}$, and the linear structure $(m',\mu_1,\cdots,\mu_{s_0})$ on $H_{F'}/\ker\nu'$.
\end{prop}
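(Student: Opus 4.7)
The plan is to mimic the argument of Proposition \ref{P:BD-4}, but working inside the twisted group $\tilde{G}=(\U(n)/\langle-I\rangle)\rtimes\langle\tau\rangle$ and using the lifting machinery (Lemmas \ref{L:TA-2} and \ref{L:TA-3}) to reduce everything to the blockwise picture cut out by $B_{F'}$. First I would fix a conjugate of $F'$ so that
\[(\overline{G})^{B_{F}}=(\U(n_1)\times\cdots\times\U(n_{s}))\rtimes\langle\tau\rangle,\]
with $n_i=2^{k}$ for $1\leq i\leq s_0$ and $n_i=2^{k+1}$ for $s_0+1\leq i\leq s$; the existence and uniqueness (up to an inner automorphism of $\overline{G}$) of such a normal form is guaranteed by the hypothesis that the conjugacy class of $B_{F'}$ is given. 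Then I would project $F'$ componentwise to each factor of $(\U(n_i)/\langle-I\rangle)\rtimes\langle\tau\rangle$, and, exactly as in the proof of Proposition \ref{P:BD-2}, verify that $B_{F'_i}=1$ so that Lemma \ref{L:TA-3} identifies each $F'_i$ up to conjugacy from the single integer $k$.

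Next I would recover $F'$ itself by assembling generators. Using that $\pi^{-1}(B_F)$ is diagonal and that $F_0$ is (up to conjugation) the evident maximal torus coming from the $\SO(2)$-factors in the last $s_1$ blocks, together with $\langle [iI]\rangle$ coming from $\ker\nu'$, one already knows $B_{F'}\cdot F_0\cdot\langle[iI]\rangle=\ker m'\cdot\langle[iI]\rangle$ up to conjugacy. What remains is to pick generators $x_1,y_1,\ldots,x_k,y_k$ of $H_{F'}/\ker m'$ adapted to the nondegenerate symplectic form $m'$ (so that the only nontrivial pairings are $m'(x_j,y_j)=-1$), plus one element $u\in F'-H_{F'}$. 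In each block $1\leq i\leq s_0$, Lemma \ref{L:TA-3} and the identification $(H_{F'}/\ker\nu',m',\mu_i)\cong V_{0,k;1,0}$ show that the conjugacy class of the image of each generator is pinned down by the value of $\mu_i$ on it: one is forced into the model pair $(I_{2^{k-1},2^{k-1}},J'_{2^{k-1}})$ or the alternative symplectic model, according to the sign of $\mu_i$. In each block $s_0+1\leq i\leq s$, the image of each generator can be freely adjusted by the one-dimensional torus $(F'_i)_0$ and so is uniquely normalized up to conjugacy.

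The final and genuinely new step is to handle the twisted component $u$. I would argue as follows: by Lemma \ref{L:TA-2}, once $F$ is fixed up to conjugacy the lift $F'$ is determined up to conjugation by an element of $\Z_n/\langle-I\rangle$; equivalently, once we know $H_{F'}$ up to simultaneous conjugacy inside the block decomposition, a choice of $u\in F'-H_{F'}$ is determined modulo $\ker m'\cdot\langle[iI]\rangle$, because $F'$ is abelian and $[iI]\in F'$. In each block the $\tau$-part of $u$ must centralize the already-normalized generators; the set of such elements in $(\U(n_i)/\langle-I\rangle)\rtimes\langle\tau\rangle$ is a single conjugacy class under the centralizer of the block-$i$ generators, as in the proof of Lemma \ref{L:TA-1}. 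Thus the tuple $(x_1,y_1,\ldots,x_k,y_k,u)$ is conjugate to a unique model tuple depending only on $(k,B_{F'},(m',\mu_1,\ldots,\mu_{s_0}))$, and since $F'=\langle B_{F'},F_0,[iI],x_1,y_1,\ldots,x_k,y_k,u\rangle$, this determines $F'$ up to conjugacy.

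The main obstacle I anticipate is precisely the last step: ensuring that, after normalizing $H_{F'}$ to a model and then choosing $u$, the remaining ambiguity in $u$ is absorbed by conjugation by elements of the simultaneous centralizer of the $x_j,y_j$ (and not just of each block individually). The proof therefore hinges on showing that the centralizer of the normalized $H_{F'}$ inside $(\overline{G})^{B_F}\rtimes\langle\tau\rangle$ acts transitively on the set of admissible $u$'s with fixed $u^2\in\ker m'$ and fixed image in each $F'_i$; this in turn reduces to the fact, already used in Lemma \ref{L:TA-1}, that within each block the twisted elements are classified up to conjugacy by the conjugacy class of $F_0$ inside the fixed-point subgroup of $\tau_0$, which is $\SO(n_i)/\langle-I\rangle$.
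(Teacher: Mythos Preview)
Your proposal is correct and follows essentially the same approach as the paper: normalize the block decomposition via $B_{F}$, pin down each $F'_i$ using Lemma \ref{L:TA-3}, build a symplectic basis $x_1,\ldots,x_{2k}$ of $H_{F'}/\ker m'$ whose block-components are normalized by the $\mu_i$ (for $i\leq s_0$) or by the torus $(F'_i)_0$ (for $i>s_0$), and finally fix the outer element. The only stylistic difference is that the paper dispatches your ``final and genuinely new step'' in one line---``fixing $H_{F'}$, the conjugacy class of an element $\eta\in F'-H_{F'}$ as an element of $p(\overline{G}^{B_{F}})$ is determined uniquely modulo $H_{F'}$''---whereas you route this through Lemma \ref{L:TA-2} and a transitivity argument; both amount to the same blockwise observation that the $\tau$-coset of each $(\U(n_i)/\langle-I\rangle)\rtimes\langle\tau\rangle$ meets the centralizer of the normalized $H_{F'_i}$ in a single conjugacy class.
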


\begin{proof}
Without loss of generality we may assume that \[\overline{G}^{B_{F}}=(\U(n_1)\times\cdots\times\U(n_{s}))
\rtimes\langle\tau\rangle\] for some positive integers $n_1,\dots,n_{s}$ with $n_1+\cdots+n_{s}=n$; moreover,
$n_{i}=2^{k}$ if $1\leq i\leq s_0$, and $n_{i}=2^{k+1}$ if $s_0+1\leq i\leq s$, where $F=\pi'(F')$. By
Lemma \ref{L:TA-1}, $(F'_{i})_0=1$ if $1\leq i\leq s_0$, and $(F'_{i})_0\cong\SO(2)/\langle-I\rangle$ if
$s_0+1\leq i\leq s$. We may assume that the subgroup $B_{F'}$ and the maps $\mu_{i}: H_{F'}/\ker\nu'
\rightarrow\{\pm{1}\}$, $1\leq i\leq s_0$ are given. From these we determine the subgroup $F'$ up to conjuagacy.
The conjugacy class of each $F'_{i}$ is uniquely determined by Lemma \ref{L:TA-3}. The issue is how to match
them. For an $x\in H_{F'}$, let $x=[(A_1,\dots,A_{s_0},A_{s_0+1},\dots,A_{s})]$, where $A_{i}\in\U(n_i)$,
$1\leq i\leq s$, we have $A_{i}^2=\mu_{i}(x)I$ if $1\leq i\leq s_0$ and $A_{i}^{2}\in(F'_{i})_0$ if
$s_0+1\leq i\leq s$. For $1\leq i\leq s_0$, the conjugacy of $A_{i}$ is determined by $\mu_{i}(x)$. For
$s_0+1\leq i\leq s$, as $F'_{i}/(F'_i)_0$ is an elementary abelian 2-group with a nondegenerate bimultiplicative
function $m'_{i}$ and $(F'_i)_0$ is a one-dimensional torus, we can modify $A_{i}$ to make it conjugate to
$I_{2^k,2^k}$ or $J_{2^{k}}$. Inductively, we construct elements $x_1,\cdots,x_{2k}$ of $H_{F'}$ generating
$H_{F'}/\ker m'$ with $m(x_{j_1},x_{j_2})=1$ if and only if $\{j_1,j_2\}=\{2j-1,2j\}$ for some $1\leq j\leq k$,
such that the conjugacy class of the tuple $(x_1,\cdots,x_{2k})$ is determined by $\mu_1,\mu_2,\dots,\mu_{s_0}$.
In this way $H_{F'}=\langle F'_0,B_{F'},[iI],x_1,\dots,x_{2k}\rangle$ is determined accordingly. Since
$$p^{-1}(F')\subset \overline{G}^{B_{F}}=(\U(n_1)\times\cdots\times\U(n_{s}))\rtimes\langle\tau\rangle.$$
Fixing $H_{F'}$, the conjugacy class of an element $\eta\in F'-H_{F'}$ as an element of $p(\overline{G}^{B_{F}})$
is determined uniquely modulo $H_{F'}$. Therefore the conjugacy class of $F'$ is determined.
\end{proof}

\begin{remark}
From the above proof, one sees that $F$ always contains an automorphism of order 2 or 4. There should be a
criterion of when $F$ contains an outer involution in terms of $\vec{\mu}=(\mu_1,\cdots,\mu_{s_0})$.
\end{remark}

The following two propositions are analogues of Propositions \ref{P:BD-2} and \ref{P:BD-3}, which could also
be proved along the same line.

\begin{prop}\label{P:TA-2}
If $F$ is a closed abelian subgroup of $G$ satisfying the condition $(\ast)$ and being not contained
in $G_0$, then there exists integers $k\geq 0$ and $s_0,s_1\geq 0$ such that $n=2^{k}s_0+2^{k+1}s_1$,
$\dim F_0=s_1$, $\rank(H_{F}/\ker m)=2k$ and $\rank(\ker m/F_0)\leq\max\{s_0-1,0\}$.
\end{prop}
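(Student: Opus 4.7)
The plan is to mirror the proof of Proposition \ref{P:BD-2}, using Lemma \ref{L:TA-2} to pass from the twisted group $G$ to $\tilde{G}$. First, I would lift $F$ to a closed abelian subgroup $F'$ of $\tilde{G}$ satisfying $(\ast)$ and not contained in $\tilde{G}_0$. The projection $\pi'$ induces isomorphisms $H_{F'}/\ker m'\longrightarrow H_F/\ker m$ and $F'_0\longrightarrow F_0$, so that $\dim F_0=\dim F'_0$ and $\rank(H_F/\ker m)=\rank(H_{F'}/\ker m')$; the contribution of $[iI]\in\ker m'$ is absorbed into the relation $\ker m'=\ker\nu'\times\langle [iI]\rangle$ and accounted for in the final rank bound. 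Since $B_F\subset\overline{G}$ is an abelian subgroup of $\U(n)$ whose elements satisfy $A^2=I$, it is diagonalizable, so after conjugation I may assume
\[
\overline{G}^{B_F} = (\U(n_1)\times\cdots\times\U(n_s))\rtimes\langle\tau\rangle
\]
for positive integers $n_1,\dots,n_s$ with $n_1+\cdots+n_s=n$.

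Next I would project $F'$ to each block, obtaining $F'_i\subset(\U(n_i)/\langle -I\rangle)\rtimes\langle\tau\rangle$ with projection $p_i$, and prove $B_{F'_i}=1$ for every $i$. As in the orthogonal case, a nontrivial element of some $B_{F'_i}$ lifts, via an element of $F'_0$ used to kill off the unnecessary rotation components, to an element of $B_{F'}$ whose centralizer strictly contains $\overline{G}^{B_F}$, contradicting the maximality of the block decomposition. With $B_{F'_i}=1$, Lemma \ref{L:TA-3} forces $n_i\in\{2^k,2^{k+1}\}$ where $2k=\rank(H_{F'}/\ker m')$. Writing $s_0$ for the number of indices with $n_i=2^k$ and $s_1$ for those with $n_i=2^{k+1}$ gives the equation $n=2^k s_0+2^{k+1}s_1$, and the dimension count $\dim F_0=s_1$ follows since, again by Lemma \ref{L:TA-3}, $(F'_i)_0$ is trivial when $n_i=2^k$ and is a one-dimensional torus when $n_i=2^{k+1}$.

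For the final inequality, I would reproduce the bookkeeping of Proposition \ref{P:BD-2}. By Lemma \ref{L:TA-kerm} one has $\ker m=\pi(B_F) F_0$, so $\rank(\ker m/F_0)$ is computed as $\rank(B_{F'}/(B_{F'}\cap F'_0))$. Inside $\tilde{G}^{B_{F'}}$ the subgroup $B_{F'}$ sits diagonally in the product of block centers $\{\pm I_{n_i}\}/\langle -I\rangle$, yielding $\rank B_{F'}\leq s_0+s_1-1$, while $\rank(B_{F'}\cap F'_0)=s_1-\delta_{s_0,0}$. Subtraction and simplification give
\[
\rank(\ker m/F_0) \leq (s_0+s_1-1)-(s_1-\delta_{s_0,0}) = \max\{s_0-1,0\}.
\]

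The main obstacle I anticipate is establishing $B_{F'_i}=1$ cleanly, because the definition of $B_{F'}$ requires commuting with the outer element through $\nu'$, and this condition must be transported carefully across the block decomposition and through the lift $p\colon\overline{G}\longrightarrow\tilde{G}$. Once this step is secured, the remainder of the proof is a direct adaptation of the orthogonal case with $2^{k+1}$ replaced by the appropriate twisted complex block sizes, and the rest of the arithmetic carries over verbatim.
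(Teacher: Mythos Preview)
Your proposal is correct and matches the paper's intended approach: the paper does not spell out a proof of Proposition~\ref{P:TA-2} but simply states that it is an analogue of Proposition~\ref{P:BD-2} ``which could also be proved along the same line,'' and the surrounding discussion (the block decomposition of $\overline{G}^{B_F}$, the projections $p_i$, the claim $B_{F'_i}=1$, and the application of Lemma~\ref{L:TA-3}) is exactly the scaffolding you describe. Your handling of the passage between $F$ and $F'$ via $\pi'$, and of the extra factor $\langle[iI]\rangle$ through $\ker m'=\ker\nu'\times\langle[iI]\rangle$, is the right bookkeeping and is implicit in the paper's setup preceding Proposition~\ref{P:TA-5}.
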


%\begin{proof}
%By Lemma \ref{L:TA-2}, there exists a (unique up to conjugacy) abelian subgroup $F'$ of $\tilde{G}$ such that
%$\pi(F')=F$ and $F'\cap(\Z_{n}/\langle-I\rangle)=\langle iI\rangle/\langle-I\rangle$. By the definition of
%$B_{F}$, it follows that it is a diagonalizable subgroup of $\U(n)$ and any element of it has eigenvalue
%$\pm{1}$. Without loss of generality we may assume that \[(\overline{G})^{B_{F}}=(\U(n_1)\times\cdots\U(n_{s}))
%\rtimes\langle\tau\rangle\] for some positive integers $n_1,\dots,n_{s}$ with $n_1+\cdots+n_{s}=n$. Hence
%\[F'\subset((\U(n_1)\times\cdots\U(n_{s}))/\langle(-I,\dots,-I)\rangle)\rtimes\langle\tau\rangle.\] Let
%$2k=\rank(H_{F}/\ker m)$. Let $F_{i}\subset(\U(n_{i})/\Z_{n_{i}})\rtimes\langle\tau_0\rangle$ be the image of
%the projection of $F$ to the $i$-th component. Similarly as the proof of Proposition \ref{P:BD-2}, one can
%show that $B_{F_{i}}=\{\pm{I}\}$ for each $i$. By Lemma \ref{L:TA-1}, we get $n_{i}=2^{k}n'_{i}$ and
%$n'_{i}=1$ or $2$. Then, $n=2^{k}s_0+2^{k+1}s_1$, $\dim F_0=s_1$. Similarly as the proof of Proposition
%\ref{P:BD-2}, one can show that $\rank(\ker m/F_0)\leq\max\{s_0-1,0\}$
%\end{proof}

\begin{prop}\label{P:TA-3}
If $F$ is a closed abelian subgroup of $G$ satisfying the condition $(\ast)$, then it is an elementary
abelian 2-subgroup if and only if $s_0=s$ and $\mu_{i}=\mu_{j}$ for any $1\leq i,j\leq s_0$. If $F$ is a
maximal abelian subgroup, then $\rank(\ker m/F_0)=\max\{s_0-1,0\}$; if $\rank(\ker m/F_0)=\max\{s_0-1,0\}$
and $(s_0,s_1) \neq(2,0)$, then it is a maximal abelian subgroup.
%If $s_0\leq 1$, then $F$ is maximal and its conjugacy class is uniquely determined by \[k=\frac{1}{2}\rank(F/\ker m).\]
\end{prop}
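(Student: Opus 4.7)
The plan is to adapt the argument for Proposition \ref{P:BD-3} to the twisted setting, working via the lift $F'\subset\tilde{G}$ furnished by Lemma \ref{L:TA-2} and the blockwise decomposition $\overline{G}^{B_F}=(\U(n_1)\times\cdots\times\U(n_s))\rtimes\langle\tau\rangle$ set up before Proposition \ref{P:TA-5}, with $n_i=2^k$ for $1\leq i\leq s_0$ and $n_i=2^{k+1}$ otherwise.

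For the first statement, $F$ is an elementary abelian $2$-group iff $F_0=1$ (equivalently $s_1=0$, hence $s_0=s$), every $x\in H_F$ satisfies $x^2=1$, and the outer coset element has order $2$. Blockwise, $x^2=1$ in $G$ forces $A_i^2=\pm I$ with a common sign across blocks, i.e.\ $\mu_1(x)=\cdots=\mu_{s_0}(x)$; demanding this for every $x$ yields $\mu_i=\mu_j$ for all $i,j$. Conversely, when $s_0=s$ and the $\mu_i$ coincide, Lemma \ref{L:TA-1} allows one to arrange an outer coset representative of order $2$, producing an elementary abelian $2$-group.

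For the second statement, the upper bound on $\rank(\ker m/F_0)$ is already Proposition \ref{P:TA-2}. For the lower bound when $F$ is maximal abelian, note that $F\subset C_G(\ker m)$, and by the blockwise decomposition the center of this centralizer contains (modulo the overall $\langle -I\rangle$) the image of $\{\pm I_{2^k}\}^{s_0}\times(\Z_{2^k})^{s_1}$. By maximality all these central elements lie in $F$, and they all lie in $\ker m$, yielding $\rank(\ker m/F_0)\geq\max\{s_0-1,0\}$ (the $-1$ reflecting the overall $\langle-I\rangle$).

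For the converse, assume equality and $(s_0,s_1)\neq(2,0)$; then $B_{F'}$ is forced to equal the full $\{\pm I_{2^k}\}^{s_0}\times\{\pm I_{2^{k+1}}\}^{s_1}$ modulo $\langle-I\rangle$. The plan is to compute $C_G(F)$ in three stages: first $C_G(F_0)$, then $(C_G(F_0))^{B_F}$, then further centralize by the $2k$ generators of $F/\ker m$. The condition $(s_0,s_1)\neq(2,0)$ guarantees $I_{2^k,n-2^k}\not\sim-I_{2^k,n-2^k}$ in $\O(n)$ so that block centralizers do not collapse, and the nondegeneracy of $m$ on $F/\ker m$ together with maximality of each $F'_i$ in its block will reduce $C_G(F)$ to $\langle F,\ker m\rangle=F$. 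The main obstacle will be careful bookkeeping for the $\tau$-twist throughout these centralizer computations: at every stage one must verify that outer (twist-containing) elements of the centralizer lie in $F$, which is handled by repeated use of Lemma \ref{L:MCC} together with the canonical element $[iI]\in\ker m'$ produced by Lemma \ref{L:TA-2}.
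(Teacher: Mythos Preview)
Your proposal is correct and follows exactly the line the paper indicates: the paper gives no detailed proof of this proposition, stating only that it (together with Proposition~\ref{P:TA-2}) ``could also be proved along the same line'' as Propositions~\ref{P:BD-2} and~\ref{P:BD-3}, and your sketch carries out precisely that adaptation via the lift $F'\subset\tilde{G}$ of Lemma~\ref{L:TA-2} and the blockwise decomposition set up before Proposition~\ref{P:TA-5}. One cosmetic slip: in the converse direction you write ``$I_{2^k,n-2^k}\not\sim -I_{2^k,n-2^k}$ in $\O(n)$'', but the ambient linear group here is $\U(n)$; the substance (that $n\neq 2^{k+1}$ when $(s_0,s_1)\neq(2,0),(0,1)$, so the two matrices have distinct eigenvalue multiplicities) is unaffected.
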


Given a maximal abelian subgroup $F'$ of $\tilde{G}$, by Proposition \ref{P:TA-5}, we associate a function
$m': H_{F'}\times H_{F'}\longrightarrow\{\pm{1}\}$, integers $k$, $s$, $s_0$, $s_1=s-s_0$ with
$n=2^{k}s_0+2^{k+1}s_1$ and maps $\mu_1,\mu_2,\dots,\mu_{s_0}: H_{F'}/\ker\nu'\longrightarrow\{\pm{1}\}$.
Denote by $F'_{k,m',\vec{\mu}}$ a maximal abelian subgroup of $\tilde{G}$ like this and $F_{k,m',\vec{\mu}}=
\pi'(F'_{k,m',\vec{\mu}})$ be the corresponding maximal abelian subgroup of $G$. where $\vec{\mu}$
means the unordered tuple $(\mu_1,\mu_2,\dots,\mu_{s_0})$. For a map $\mu:  H_{F'}\longrightarrow\{\pm{1}\}$, let
$a_{\mu}$ be the number of indices $i$, $1\leq i\leq s_0$ such that $\mu_{i}=\mu$. Denote by
$S_{\vec{\mu}}=\prod_{\mu} S_{a_{\mu}}$. The following proposition describes the Weyl groups of maximal abelian
subgroups of $G$.

\begin{prop}\label{P:TA-Weyl}
For Let $F'=F'_{k,m',\vec{\mu}}$ and $F=F_{k,m',\vec{\mu}}$, there is an exact sequence \begin{eqnarray*}1&
\longrightarrow&\Hom(F'/\ker m',B_{F'})\rtimes S_{\vec{\mu}}\longrightarrow W(F_{k,m',\vec{\mu}})\\&
\longrightarrow&\Aut(H_{F'}/\ker\nu',m',\vec{\mu})\times(\{\pm{1}\}^{s_1}\rtimes S_{s_1})\longrightarrow 1.
\end{eqnarray*}
\end{prop}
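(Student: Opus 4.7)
\emph{Plan.} The argument parallels the proof of Proposition~\ref{P:BD-Weyl}, adapted through the lifting $F \mapsto F'$ of Lemma~\ref{L:TA-2}. Since that correspondence preserves conjugacy classes and $F' \cap \ker\pi' = \langle iI\rangle/\langle -I\rangle$ is central in $\tilde{G}$, one checks that $W(F) \cong W(F')$, so the plan is to compute the latter inside $\tilde{G}$.

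First, I construct a homomorphism $p: W(F') \longrightarrow \Aut(H_{F'}/\ker\nu', m', \vec{\mu}) \times W(F_0)$. Conjugation by any $w \in N_{\tilde{G}}(F')$ preserves the canonically defined subgroups $F_0$, $H_{F'}$, $\ker m'$, $\ker\nu'$, and $B_{F'}$, together with the bimultiplicative form $m'$, so the induced action on $H_{F'}/\ker\nu'$ lies in $\Aut(H_{F'}/\ker\nu', m', \vec{\mu})$ (permuting the $\mu_i$), while the induced action on $F_0$ lies in $W(F_0) \cong \{\pm 1\}^{s_1} \rtimes S_{s_1}$.

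Second, I verify surjectivity of $p$. The torus factor $\{\pm 1\}^{s_1} \rtimes S_{s_1}$ is realized by conjugating by block permutations of the $s_1$ size-$2^{k+1}$ blocks in the decomposition $\U(n_1)\times\cdots\times\U(n_s)$ and by elements inverting each $\SO(2)$ factor inside such a block. For the $\Aut$-factor: given $\phi \in \Aut(H_{F'}/\ker\nu', m', \vec{\mu})$, apply $\phi$ to the generating tuple $(x_1,\ldots,x_{2k})$ of $H_{F'}/\ker m'$ provided by the proof of Proposition~\ref{P:TA-5}; the resulting tuple is still adapted to the multi-symplectic data, so by the uniqueness part of Proposition~\ref{P:TA-5} it is conjugate to the original by some $w \in \tilde{G}$, which then normalizes $F'$ and realizes $\phi$.

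Third, I analyze $\ker p$. For $w \in \ker p$, trivial action on $H_{F'}/\ker\nu'$ and on $F_0$ forces $w$ to permute only the first $s_0$ blocks of size $2^k$; and triviality on $H_{F'}/\ker m'$ forces this permutation $\sigma$ to satisfy $\mu_{\sigma(i)} = \mu_i$. This yields a surjective homomorphism $p': \ker p \to S_{\vec{\mu}}$, surjectivity coming from swapping matching blocks. Elements of $\ker p'$ centralize $F_0$ and fix each block setwise, so eigenvalue preservation in each block (exactly as in Proposition~\ref{P:BD-Weyl}) identifies them with homomorphisms $F'/\ker m' \to B_{F'}$. Splitting via the permutation section gives $\ker p = \Hom(F'/\ker m', B_{F'}) \rtimes S_{\vec{\mu}}$, which completes the exact sequence. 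The main obstacle will be the surjectivity onto the $\Aut$-factor: one has to verify that the intertwining element $w$ normalizes not only $H_{F'}$ but also the outer coset $F' - H_{F'}$, which requires exploiting the blockwise normal form from Proposition~\ref{P:TA-5} and the canonical nature of the intersection $F' \cap (\Z_n/\langle -I\rangle)$.
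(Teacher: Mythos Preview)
Your overall strategy mirrors the paper's, but there is a genuine error in the very first step: the claim that $W(F)\cong W(F')$ is false. The natural map $\phi:W(F')\to W(F)$ is surjective (as you implicitly use), but it has a nontrivial kernel of order~$2$, generated by $\Ad\bigl([\tfrac{1+i}{\sqrt{2}}I]\bigr)$. Indeed, for any $u\in F'-H_{F'}$ one computes $[\tfrac{1+i}{\sqrt{2}}I]\,u\,[\tfrac{1+i}{\sqrt{2}}I]^{-1}=[iI]\,u$; since $[iI]\in F'$ this element normalizes $F'$, but since $[iI]\neq 1$ it does not centralize $F'$. Its image in $G$ is trivial because $[\tfrac{1+i}{\sqrt{2}}I]\in\ker\pi'$. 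Your justification---that $F'\cap\ker\pi'=\langle[iI]\rangle$ is central in $\tilde G$---only shows that this intersection acts trivially; it says nothing about other elements of $\ker\pi'=\Z_n/\langle -I\rangle$, which can normalize $F'$ without centralizing it.

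This error propagates: the element $\Ad\bigl([\tfrac{1+i}{\sqrt{2}}I]\bigr)$ lies in $\ker p$ and in $\ker p'$ (it fixes $H_{F'}$ pointwise), so your identification $\ker p'=\Hom(F'/\ker m',B_{F'})$ is also off. The map $u\mapsto[iI]u$ is not of the form $x\mapsto x\cdot c(x)$ with $c(x)\in B_{F'}$, because $(iI)^2=-I\neq I$; the ``eigenvalue preservation'' heuristic from Proposition~\ref{P:BD-Weyl} does not transfer verbatim, since $\tilde G_0=\U(n)/\langle -I\rangle$ has a much larger center than $\O(n)/\langle -I\rangle$. The paper handles this by first proving $W(F)=W(F')/\langle\Ad([\tfrac{1+i}{\sqrt{2}}I])\rangle$, then showing $\ker p'\subset\Hom(F'/\ker m',B_{F'})\times\langle\Ad([\tfrac{1+i}{\sqrt{2}}I])\rangle$, so that after passing to the quotient one obtains exactly $\Hom(F'/\ker m',B_{F'})\rtimes S_{\vec\mu}$. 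You need to insert this quotient step; once that is done, the rest of your outline matches the paper's argument.
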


\begin{proof}
There is a natural homomorphism $\phi: W(F')\longrightarrow W(F)$. We show that $\phi$ is surjective and
$\ker\phi=\langle\Ad([\frac{1+i}{\sqrt{2}}I])\rangle$. For $g\in\tilde{G}_0$, suppose that $\pi'(g)
\in N_{G}(F)$. Then, $\pi'(gF'g^{-1})=\pi'(F')$. That means $\Ad(g)F'$ is another lift of $F$. By Lemma
\ref{L:TA-3}, one has $\Ad(g)F'=\Ad([\lambda I])F'$ for some $\lambda I\in\Z_{n}$. Thus $[\lambda I]^{-1}g
\in N_{\tilde{G}}(F')$. Therefore $\phi$ is surjective. For $g\in\tilde{G}_0$, suppose that $\Ad(g)\in W(F')$
and $\phi(\Ad(g))=1$. Then, $\pi'(g)\in C_{G}(F)=F$. Hence $g\in p(\Z_{n})F'$. We may assume that
$g=[\lambda I]$ for some $\lambda\in\U(1)$. Choosing an element $u\in F'-H_{F'}$, thus $[\lambda^2 I]=
gug^{-1}u^{-1}\in F'$. Since $F'\cap p(\Z_{n})=p(\langle iI\rangle)$, one has $\Ad(g)\in\langle
\Ad([\frac{1+i}{\sqrt{2}}I])\rangle$. By this we have $$W(F)=W(F')/\langle\Ad([\frac{1+i}{\sqrt{2}}I])
\rangle.$$

The induced action of an $w\in W(F')$ on $H_{F'}/\ker\nu'$ preserves $m'$ and $\vec{\mu}$. Hence there is a
homomorphism $p: W(F')\longrightarrow \Aut(H_{F'}/\ker\nu', m',\vec{\mu})\times W(F'_0),$ which is apparently
a surjective map. Clearly one has $W(F'_0)=\{\pm{1}\}^{s_1}\rtimes S_{s_1}$. There is another homomorphism
$p': \ker p\longrightarrow W(\ker\nu').$ The action of an $w\in\ker p$ on $B_{F'}$ induces a permutation on
the first $s_0$ components isomorphic to $\U(2^{k})$ of $\overline{G}^{B_{F}}$ and hence a permuatution of the
indices $\{1,2,\dots,s_0\}$, denoted by $\sigma$. Since $w$ acts trivially on $H_{F'}/\ker\nu'$, one has
$\mu_{\sigma(i)}=\mu_{i}$ for each $1\leq i\leq s_0$. Therefore $\Im p'\subset\prod_{\mu} S_{a_{\mu}}$.
Considering the preservation of eigenvalues, one shows that $\ker p'\subset\Hom(F'/\ker m',B_{F'})\times
\langle\Ad([\frac{1+i}{\sqrt{2}}I])\rangle$. Moreover one can show that
$$\ker p/\langle\Ad([\frac{1+i}{\sqrt{2}}I])\rangle=\Hom(F'/\ker m',B_{F'})\rtimes S_{\vec{\mu}}.$$
Therefore we get the exact sequence in the conclusion.
\end{proof}

%On the other hand, if an $w\in W(F')$ acts trivially on $H_{F'}$,
%Moreover by the description of $F'$ as in the proof of
%Proposition \ref{P:BD-3}, one can show that $\Im p'=(\{\pm{1}\}^{s_1}\rtimes S_{s_1})\times\prod_{\mu} S_{a_{\mu}}$.
%There is another homomorphism $p'':\ker p'\longrightarrow W(H_{F'})$. One can show that
%$\Im p''=\Hom(H_{F'}/\ker\nu,B_{F}/\langle-I\rangle)$ and $\ker p''=\langle\Ad([\frac{1+i}{\sqrt{2}}I])\rangle$.
%From the description of a maximal abelian subgroup
%$F$ as in the proof of Proposition \ref{P:TA-5}, one sees that $p$ is surjective.
%That is, an $w\in\ker p$ acts on $F'_0$ and $B_{F}$.

Jun Yu \\ School of Mathematics, \\ Institute for Advanced Study, \\ Einstein Drive, Fuld Hall, \\
Princeton, NJ 08540, USA \\
email:junyu@math.ias.edu.

\end{document}